\numberwithin{equation}{section}
\newtheorem{theorem}{Theorem}[section]
\newtheorem{lemma}[theorem]{Lemma}
\newtheorem{remark}{Remark}[section]
\newtheorem{prop}{Proposition}[section]
\newcommand{\dif}{\mathrm{d}}
\newcommand{\LIM}[2]{\lim\limits_{{#1}\rightarrow{#2}}}
\begin{document}
\title[Stability for relaxed CNS]{Asymptotic stability of composite waves of viscous shock and rarefaction  for relaxed compressible Navier-Stokes equations}
\author{Renyong Guan and Yuxi Hu}
 \thanks{\noindent  Renyong Guan,   Department of Mathematics, China University of Mining and Technology, Beijing, 100083, P.R. China, renyguan@163.com\\
\indent  Yuxi Hu, Department of Mathematics, China University of Mining and Technology, Beijing, 100083, P.R. China, yxhu86@163.com\\
 }
\begin{abstract}
The time asymptotic stability for one-dimensional relaxed compressible Navier-Stokes equations is studied.
We show that the composite waves of viscous shock and rarefaction are asymptotically nonlinear stable with  both small wave strength and small initial perturbations. Moreover,  as the relaxation parameter goes to zero,  the solutions of relaxed system are shown to converge globally in time to that of classical system. The methods are based on relative entropy, the $a$-contraction with shifts theory and basic energy estimates.
 \\[2em]
{\bf Keywords}: Relaxed compressible Navier-Stokes equations; asymptotic stability; relaxation limit; composite waves; relative entropy; energy estimates \\

\end{abstract}
\maketitle
\section{Introduction}
In this paper, we study the system of one-dimensional isentropic compressible Navier-Stokes equations with Maxwell's constitutive relations. The system can be described as follows:
\begin{align}\label{1.1}
\begin{cases}
\rho_t+(\rho u)_x=0,\\
(\rho u)_t+(\rho u^2)_x+p(\rho)_x=\Pi_x,\\
\tilde \tau  (\rho) (\Pi_t+ u \Pi_x)+\Pi=\mu u_x,
\end{cases}
\end{align}
where $(t, x)\in (0, +\infty)\times \mathbb R$. Here, $\rho$, $u$, $\Pi$ denote fluid density, velocity and stress, respectively. $\mu>0$ is the viscosity constant.
The pressure $p$ is assumed to satisfy the usual $\gamma$-law, $p(\rho)=A \rho^\gamma$ where $\gamma>1$ denotes the adiabatic index and $A$ is any positive constant.
Without loss of generality, we assume 
$A=1$ in the sequel.

The constitutive relation $\eqref{1.1}_3$  was proposed by Maxwell in \cite{MAX}, in order to describe the relation of stress tensor and velocity gradient for a non-simple fluid. The relaxation parameter $\tilde\tau=\tilde \tau(\rho)$ describe the time lag in response of the stress tensor to velocity gradient. We note that even in simple fluid, water for example, the {\it {time lag}} exists, but it is very small ranging from 1 ps to 1 ns, see \cite{GM, FS}. However, Pelton et al. \cite{MP} showed that such a {\it time lag} cannot be neglected, even for simple fluids, in the experiments of high-frequency (20GHZ) vibration of nano-scale mechanical devices immersed in water-glycerol mixtures. It was shown that, see also \cite{DJE}, equation $\eqref{eq1}_3$ provides a general formalism with which to characterize the fluid-structure interaction of nano-scale mechanical devies vibrating in simple fluids.

For simplicity, we assume $\tilde \tau(\rho)=\tau \rho$ with $\tau$ being a  positive constant. Therefore, the equation $\eqref{1.1}_3$ reduces to
\begin{align}\label{1.2}
\tau  \rho (\Pi_t+ u \Pi_x)+\Pi=\mu u_x.
\end{align}
The constitutive equation \eqref{1.2} was proposed firstly by Freist\"uhler \cite{FRE1, FRE2} from the point of view of conservation laws. Indeed, under the above assumption, the equation \eqref{1.2} is conservative by use of mass equation $\eqref{1.1}_1$ and thus weak solutions are easily to defined. For convenience, we rewrite the system \eqref{1.1} with $\tilde \tau(\rho)=\tau \rho$ in Lagrangian coordinates as follows:
\begin{equation}\label{eq1}
\begin{cases}
v_t-u_x=0,\\
u_t+p_x=\Pi_x,\\
\tau \Pi_t+v\Pi=\mu u_x,
\end{cases}
\end{equation}
where $v=\frac{1}{\rho}$ denotes the specific volume per unit mass.

We are interested in the Cauchy problem to system \eqref{eq1}   for the functions
\begin{align*}
(v, u, \Pi): [0, +\infty) \times \mathbb R \rightarrow (0, \infty)\times \mathbb R \times \mathbb R
\end{align*}
with initial conditions
\begin{align} \label{eq2}
(v, u, \Pi)(0,x)=(v_0, u_0, \Pi_0)(x)\rightarrow(v_{\pm},u_{\pm},0) \quad (x\rightarrow\pm\infty).
\end{align}

The large-time behavior of solutions to system \eqref{eq1}-\eqref{eq2} is closely
related to the Riemann problem of the associated $p$-system
\begin{equation}\label{eq3}
\begin{cases}
v_t-u_x=0,\\
u_t+p(v)_x=0,
\end{cases}
\end{equation}
with the Riemann initial data:
\begin{equation}\label{eq4}
(v,u)(t=0,x)=
\begin{cases}
(v_-,u_-),\quad x<0,\\
(v_+,u_+),\quad x>0.
\end{cases}
\end{equation}

If $\tau=0$, the system \eqref{eq1} reduces to classical compressible isentropic Navier-Stokes equations:
\begin{equation}\label{1}
\begin{cases}
v_t-u_x=0,\\
u_t+p(v)_x=\left(\mu\frac{u_x}{v}\right)_x.
\end{cases}
\end{equation}

For system \eqref{1}, the asymptotic behavior of solutions with various initial data have been widely studied, see \cite{HH, HLM, HM, WY, MNS, MNR1, MNR2}. For shock profile initial data, Matsumura and Nishihara \cite{MNS}, Goodman\cite{GD} first established the stability of the traveling wave under the condition that the initial disturbance is suitably small and of zero constant component by using anti-derivative method. Using energy methods, Matsumura and Nishihara \cite{MNR1, MNR2}  established the stability of rarefaction waves. Note that the standard anti-derivative methods, used to study the stability of viscous shocks, are not compatible to the energy method used for the stability of rarefactions. Recently, M.-J. Kang, A. F. Vasseur, Y. Wang  \cite{WY} showed that the composite waves of viscous shock and rarefaction is still stable by using the method of relative entropy and the $a$-contraction with shifts theory, which was first initiated by Bresch and Desjardins in \cite{BD} and  was developed  by  \cite{KV1, KV2}. The aim of this paper is to extend this result to the case of relaxed compressible Navier-Stokes equations \eqref{eq1}.

One should note that it is not obvious that the results which hold for the classical systems also hold for the relaxed system. Indeed, and for example, Hu and Wang \cite{HW}  and Hu, Racke and Wang \cite{HRW} showed that, solutions may blow up in finite time with some large initial data for the relaxed system; while ,for classical compressible Navier-Stokes system, solutions were shown to  exist globally with arbitrary large initial data (away from vacuum) , see \cite{Kanel1968}.  A similar qualitative change was observed before for certain thermoelastic systems, where the non-relaxed system is exponentially stable, while the relaxed one is not, see Quintanilla and Racke resp. Fern\'andez Sare and Mu\~noz Rivera \cite{QuRa011,FeMu012} for plates, and Fern\'andez Sare and Racke \cite{FeRa009} for Timoshenko beams.

For the case of relaxed compressible Navier-Stoeks equations, the time-asymptotic stability of single viscous shock wave or composite of two rarefaction waves have already been studied. For $\tilde \tau(\rho)$ being a constant,  Hu-Wang \cite{ZWH} and Hu-Wang \cite{XFH} established the linear stability of the viscous shock wave and nonlinear  stability of rarefaction waves, respectively. For $\tilde \tau=\tau \rho$ , by checking Majda's condition on the Lopatinski determinant and Zumbrun's Evans function condition \cite{MZ, MZ2, PZ},  Freist\"uhler \cite{FRE1} get the nonlinear stability of the viscous shock waves for system \eqref{1.1} with shock profile initial data.

Motivated by the methods used in \cite{WY}, we studied the time asymptotic stability for composite waves of the superposition  of a viscous shock and a rarefaction to system \eqref{eq1}-\eqref{eq2}. Note that the B-D  entropy used in \cite{WY}, which is essential to prove the  $a$-contraction property, is not available for the studied system \eqref{eq1}. Meanwhile, the dissipation structure of relaxed system \eqref{eq1} is much weaker than that of classical system \eqref{eq3}, thus energy estimates may have new challenges.  Therefore, new methods  and ideas should be developed to overcome such difficulties. Here are our strategies. Instead of using B-D entropy,  by using of the special  hyperbolic structure of the relaxed system and the relative entropy quantities with weight function and shifts, we first established the  $L^\infty_tL^2_x$ estimates of $(v-\widetilde v, u-\widetilde u, S-\widetilde S)$ and weighted $L^2_tL^2_x$ estimates of $(v-\widetilde v, S-\widetilde S)$ (see Lemma \ref{Le5}). Unlike that in \cite{WY}, the dissipation estimates of  the derivative of $(v-\widetilde v)$ can not obtained for system \eqref{eq1} due to the essentially different structure of the two system. Indeed, the change of variables based on B-D entropy is not available for the relaxed system and thus the $a$-contraction property could  be violated. However, we can recover the estimates of derivative of $(v-\widetilde v)$ by carefully doing higher energy estimates. We remark that, compared to the estimates given in \cite{WY}, the $H^2$ estimates  of the solutions are needed to close the energy.

Our main theorem are stated as follows:

\begin{theorem}\label{th1}
Let the relaxation parameter $\tau$ satisfy
\[\tau\leq \min\{\inf\limits_{\widetilde{v}^S\in[v_m,v_+]}\frac{\mu}{|\sigma^2+p^{\prime}(\widetilde{v}^S)|}, 1\}.\]
For a given constant state $(v_+,u_+)\in \mathbb{R}_+\times \mathbb{R}$,
there exist constants $\delta_0,\varepsilon_0>0$ such that the following holds true.\\
For any $(v_m,u_m)\in S_2(v_+,u_+)$ and $(v_-,u_-)\in R_1(v_m,u_m)$ such that
\[|v_+-v_m|+|v_m-v_-|\leq\delta_0.\]
Denote $(v^r,u^r)(\frac{x}{t})$ the 1-rarefaction solution to \eqref{eq3} with end states $(v_-,u_-)$ and $(v_m,u_m)$, and $(\widetilde{v}^S,\widetilde{u}^S,\widetilde{S}^S)(x-\sigma t)$ the 2-viscous shock solution (defined in \eqref{eq25}) with end states $(v_m,u_m,0)$ and $(v_+,u_+,0)$.
Let $(v_0,u_0,S_0)$ be any initial data such that
\begin{equation}\label{eq36}
\sum\limits_{\pm}\left(\|(v_0-v_{\pm},u_0-u_{\pm})\|_{L^2(\mathbb{R}_{\pm})}\right)
+\|((v_0)_x,(u_0)_x)\|_{H^1(\mathbb{R})}+\sqrt{\tau}\|\Pi_0\|_{H^2(\mathbb{R})}<\varepsilon_0,
\end{equation}
where $\mathbb{R}_+:=-\mathbb{R}_-=(0,+\infty)$.\\
Then, the initial value problem \eqref{eq1}-\eqref{eq2} has a unique global-in-time solution (v,u,S). Moreover, there exist an absolutely continuous shift $X(t)$ (defined in \eqref{eq35}) such that
\begin{equation}\label{eq37}
\begin{aligned}
&v(t,x)-\left(v^r(\frac{x}{t})+\widetilde{v}^S(x-\sigma t-X(t))-v_m\right)\in C(0,+\infty;H^2(\mathbb{R})),\\
&u(t,x)-\left(u^r(\frac{x}{t})+\widetilde{u}^S(x-\sigma t-X(t))-u_m\right)\in C(0,+\infty;H^2(\mathbb{R})),\\
&\Pi(t,x)-\left(\widetilde{\Pi}^S(x-\sigma t-X(t))+\mu\frac{\left(u^r(\frac{x}{t})\right)_x}{v^r(\frac{x}{t})}\right)\in C(0,+\infty;H^2(\mathbb{R})).
\end{aligned}
\end{equation}
In addition, as $t\rightarrow+\infty$,
\begin{equation}\label{eq38}
\begin{aligned}
\sup\limits_{x\in\mathbb{ R}}&\Big|v(t,x)-\left(v^r(x/t)+\widetilde{v}^S(x-\sigma t-X(t))-v_m\right),\\
&u(t,x)-\left(u^r(x/t)+\widetilde{u}^S(x-\sigma t-X(t))-u_m\right),\\
&\Pi(t,x)-\left(\widetilde{\Pi}^S(x-\sigma t-X(t))+\mu \frac{\left(u^r(\frac{x}{t})\right)_x}{v^r(\frac{x}{t})} \right)\Big|
 \rightarrow0,
\end{aligned}
\end{equation}
and
\begin{equation}\label{eq39}
\lim\limits_{t\rightarrow+\infty}|\dot{X}(t)|=0.
\end{equation}
\end{theorem}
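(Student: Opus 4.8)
The plan is to prove Theorem~\ref{th1} by a continuation argument combining the $a$-contraction with shifts method (to control the $L^2$-level relative entropy) with higher-order energy estimates (to close the $H^2$ bound that is needed because, unlike in \cite{WY}, the B-D entropy is unavailable). First I would set up the ansatz: the approximate profile $(\widetilde v,\widetilde u,\widetilde\Pi)(t,x)=\bigl(v^r+\widetilde v^S(\cdot-\sigma t-X(t))-v_m,\ u^r+\widetilde u^S-u_m,\ \widetilde\Pi^S+\mu (u^r)_x/v^r\bigr)$ and derive the system satisfied by the perturbation $(\phi,\psi,\zeta):=(v-\widetilde v,u-\widetilde u,\Pi-\widetilde\Pi)$, carefully isolating the error terms coming from (i) the rarefaction not being an exact solution of \eqref{eq3}, (ii) the interaction between the rarefaction and the shock, and (iii) the relaxation terms $\tau\Pi_t$. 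The shift $X(t)$ is defined through an ODE (equation \eqref{eq35}) chosen precisely so that the bad linear term in the relative-entropy estimate — the one with indefinite sign near the shock — is absorbed; this is the standard device from \cite{KV1,KV2}. Local existence in $H^2$ (away from vacuum) is routine for the quasilinear hyperbolic-parabolic-type system \eqref{eq1}, so the whole content is the a priori estimate

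\[
\mathcal N(t):=\sup_{0\le s\le t}\Bigl(\|(\phi,\psi)(s)\|_{H^2}^2+\tau\|\zeta(s)\|_{H^2}^2\Bigr)+(\text{dissipation})\le C\varepsilon_0^2,
\]

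uniformly in $\tau$, under the bootstrap hypothesis $\mathcal N(t)\le\varepsilon_1$.

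The core estimate is Lemma~\ref{Le5}, which I would obtain exactly as advertised in the introduction: apply the relative entropy functional $\eta(U\,|\,\widetilde U)$ with a weight $a(t,x)$ (piecewise-constant-like across the shock, as in \cite{WY}) and differentiate in time. The good terms are the $a$-weighted dissipation $\int a|(v-\widetilde v)|^2|(\widetilde v^S)_x|\,\dif x$ and $\int a|(S-\widetilde S)|^2|(\widetilde v^S)_x|\,\dif x$ from the shock, the rarefaction dissipation $\int |(u^r)_x|\,|(v-\widetilde v)|^2$, and the relaxation dissipation $\frac{1}{\mu}\int v|\zeta|^2$ (or its analogue in the $S$-variable); the shift $\dot X(t)$ kills the leading indefinite term. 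Because the relaxed system has no direct parabolic smoothing of $u$, the term $\int \Pi_x\psi_x$ that in the classical case yields $\int u_x^2/v$ instead only gives control of $\zeta$ through the third equation, which is why the $v_x$-dissipation genuinely cannot be recovered at this level — this is the key structural difference I would flag. So after Lemma~\ref{Le5} one has $L^\infty_tL^2_x$ control of $(\phi,\psi,\zeta)$ and $L^2_tL^2_x$ control of $(\phi,\zeta)$ and of the shock/rarefaction-weighted quantities, but \emph{not} of $\phi_x$.

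To close, I would then run weighted higher-order energy estimates: differentiate \eqref{eq1} once and twice in $x$, multiply by $\phi_x,\psi_x,\zeta_x$ (resp. second derivatives) with appropriate weights, and integrate. The third equation $\tau\Pi_t+v\Pi=\mu u_x$ is used as an algebraic-differential relation: from $\mu u_x=v\Pi+\tau\Pi_t$ one extracts $\psi_x$ (hence $\phi_t$ via $v_t=u_x$) in terms of $\zeta$ and $\zeta_t$, and one gets a damping term $\frac1\tau\|\zeta\|^2$-type dissipation from the $\Pi$-equation that, combined with the hyperbolic transport structure $v_t-u_x=0$, lets one recover $\|\phi_x\|_{L^2_tL^2_x}$ and eventually $\|\phi_x,\psi_x\|_{H^1}$. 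The condition $\tau\le\min\{\mu/|\sigma^2+p'(\widetilde v^S)|,1\}$ enters here (and in the shock construction \eqref{eq25}): it guarantees the relaxed shock profile exists and is monotone, and it makes the relaxation term a genuine damping rather than an antidamping — this is the analogue of the subcharacteristic/Majda-Zumbrun condition. All constants must be tracked to be $\tau$-independent so that the relaxation limit $\tau\to0$ follows by a compactness argument from the uniform bounds, recovering the \cite{WY} result for \eqref{1}.

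The main obstacle I expect is precisely the recovery of the $\phi_x$ (and higher) dissipation without the B-D change of variables: one has to close a nonlinear energy inequality in which the highest-order terms are only controlled after cross-differentiating the first and third equations, so the coupling constants, the smallness of the wave strength $\delta_0$ (to dominate the rarefaction-shock interaction and the rarefaction error terms, which decay only polynomially in $t$), and the relaxation size $\tau$ all have to be balanced simultaneously. The interaction terms between $v^r$ and $\widetilde v^S$ — handled in \cite{WY} via the exponential decay of the shock tail against the spreading rarefaction — must be re-examined because the relaxed shock profile \eqref{eq25} may have a different (but still exponential, under the stated $\tau$-condition) decay rate, and its $x$-derivatives up to second order must be integrable with the right rates. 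Once the a priori estimate is closed uniformly in $\tau$, the asymptotic decay \eqref{eq38}--\eqref{eq39} follows from the standard argument: the dissipation being integrable in time plus uniform $H^2$ bounds give, via an interpolation/Sobolev argument and a Barbălat-type lemma, that $\|(\phi,\psi,\zeta)(t)\|_{L^\infty}\to0$ and $|\dot X(t)|\to0$.
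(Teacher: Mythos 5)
Your overall architecture coincides with the paper's: local existence (Theorem \ref{th3.1}) plus a bootstrap on $E(t)$, the weighted relative entropy with the shift $X(t)$ at the $L^2$ level (Lemma \ref{Le5}), the correct observation that this level does \emph{not} yield dissipation of $\partial_\xi(v-\widetilde v)$, higher-order energy estimates, and finally time-decay of the $L^\infty$ norms and of $\dot X$ from the integrability of the dissipation plus interpolation. Up to that skeleton you are reproducing Proposition \ref{p1} and its use in the continuation argument.

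However, the mechanism you propose for the step you yourself single out as the main obstacle --- recovering $\int_0^t\|\partial_\xi\Phi\|_{H^1}^2\,\dif t$ --- would not work as stated. You claim that the damping from the constitutive equation $\tau\Pi_t+v\Pi=\mu u_x$, ``combined with the hyperbolic transport structure $v_t-u_x=0$,'' lets one recover the $\phi_x$-dissipation. But $\phi_x$ does not appear in either of those two equations: the mass equation only ties $\phi_t$ to $\psi_x$, and the $\Pi$-equation ties $\psi_x$ to $\zeta,\zeta_t$; no combination of them produces a sign-definite quadratic term in $\phi_x$. The only place $\phi_x$ enters algebraically is the pressure gradient in the momentum equation, and this is exactly how the paper proceeds: in Lemma \ref{Le11} one tests the once- and twice-differentiated momentum equation $\eqref{80}_2$ against $\partial_\xi^{k+1}\Phi$, so that $-p'(v)\bigl(\partial_\xi^{k+1}\Phi\bigr)^2$ is the good term, while the cross term $\int\partial_t\partial_\xi^k\Psi\,\partial_\xi^{k+1}\Phi$ is handled by moving the time derivative and invoking the mass equation $\eqref{80}_1$; separately, Lemma \ref{Le12} tests the differentiated $\Pi$-equation $\eqref{80}_3$ against $\partial_\xi^{k+1}\Psi$ to recover $\mu\int(\partial_\xi^{k+1}\Psi)^2$ from the $\zeta$-dissipation, and the two estimates are combined with a smallness balance of the constants ($\frac{1}{2C_1}$ versus $C_1$) so that neither absorbs the other. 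Also, a minor imprecision: the damping produced by the relaxation equation in the $\tau$-uniform estimates is $\int\frac{v}{\mu}|\zeta|^2$ (the functional $G(U)$ of Lemma \ref{Le5}), not a $\frac{1}{\tau}\|\zeta\|^2$-type term; the $\tau$ only appears as a weight on the time-derivative part of the energy. With the Kawashima-type cross estimates of Lemmas \ref{Le11}--\ref{Le12} inserted in place of your mass-plus-constitutive argument, your plan matches the paper's proof.
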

\begin{remark}
If we let $\delta_R=0$, the results in Theorem \ref{th1} holds true in the case of a single viscous shock. Therefore, we provide a different method to show the nonlinear stability of viscous shocks compare to that in \cite{MNS}.
\end{remark}
\begin{remark}
According to \eqref{eq39}, the shift function $X(t)$ satisfies
\begin{align*}
\LIM t {+\infty} \frac{X(t)}{t}=0.
\end{align*}
This shows that the shift function $X(t)$ will keeps the original traveling wave profile time-asymptotically, see \cite{WY}.
\end{remark}
\begin{remark}
The notation  $R_1(v_m,u_m)$ and $S_2(v_+,u_+)$ denotes the usual 1-rarefaction wave solution and 2-shock wave solution of Riemann problem for $p$-system \eqref{eq3}, respectively.
\end{remark}

Furthermore, based on the uniform estimates of error terms, we have the following convergence theorem.
\begin{theorem}\label{th1.2}
Let $(v^{\tau}, u^{\tau}, \Pi^{\tau})$ be the global solutions obtained in Theorem \ref{th1}, then there exists functions $(v^0 ,u^0)\in L^{\infty}\left((0, \infty);H^2\right)$ and $\Pi^0\in L^2\left((0, \infty);H^2\right)$, then, as $\tau\rightarrow0$, we have
\begin{align*}
(v^{\tau}, u^{\tau})\rightharpoonup (v^0, u^0) \qquad weakly-*\quad in \quad L^{\infty}\left((0, \infty);H^2\right),\\
\Pi^{\tau}\rightharpoonup \Pi^0 \qquad weakly- \quad in \quad L^2\left((0, \infty);H^2\right),
\end{align*}
where $(v^0, u^0)$ is the solution to the classical one-dimensional isentropic compressible Navier-Stokes equations \eqref{1}, with initial value $(v^0_0, u^0_0)$ is the weak limit of $(v^{\tau}_0, u^{\tau}_0)$ in \eqref{eq2}. Moreover,
\[
\Pi^0=\mu\frac{(u^0)_{x}}{v^0}.
\]
\end{theorem}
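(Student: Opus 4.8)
The plan is to run a relaxation-limit argument powered entirely by the a priori estimates already established in the proof of Theorem \ref{th1}; the only genuinely delicate feature is that the constitutive equation $\eqref{eq1}_3$ degenerates as $\tau\to0$, so the stress $\Pi^\tau$ can be controlled only weakly. First I would verify that $\delta_0$ and $\varepsilon_0$ may be chosen independently of $\tau\in(0,\tau_0]$, $\tau_0$ being the bound on $\tau$ in the statement of Theorem \ref{th1}, and that none of the constants appearing in Lemma \ref{Le5} and in the subsequent $H^2$ energy estimates blows up as $\tau\to0$; this is consistent with the scaling in \eqref{eq36}, where the $\Pi_0$-contribution already carries the weight $\sqrt\tau$. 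Denoting by $(\bar v^\tau,\bar u^\tau,\bar\Pi^\tau)$ the composite wave attached to the solution, the shock profile solves an ODE whose right-hand side depends analytically on $\tau$ through the factor $\mu-\tau(\sigma^2+p'(v))$, which stays bounded away from $0$ by the hypothesis on $\tau$; hence $(\bar v^\tau,\bar u^\tau,\bar\Pi^\tau)$ converges, together with its $x$-derivatives, uniformly to the classical composite wave as $\tau\to0$. The uniform estimates then give, uniformly in $\tau$: $v^\tau-\bar v^\tau$ and $u^\tau-\bar u^\tau$ bounded in $L^\infty(0,\infty;H^2)$; $\Pi^\tau-\bar\Pi^\tau$ bounded in $L^2(0,\infty;H^2)$ and $\sqrt\tau\,(\Pi^\tau-\bar\Pi^\tau)$ bounded in $L^\infty(0,\infty;H^2)$; and, through the equations, $v^\tau_t=u^\tau_x$ bounded in $L^\infty(0,\infty;H^1)$ and $u^\tau_t=-p(v^\tau)_x+\Pi^\tau_x$ bounded in $L^2(0,T;H^1)$ for every finite $T$. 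No bound on $\Pi^\tau_t$ is available, since $\Pi^\tau_t=O(1/\tau)$ by $\eqref{eq1}_3$; this is precisely why the stress can converge only in a weak sense.

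Using the bounds on $(v^\tau,u^\tau)$ and on $(v^\tau_t,u^\tau_t)$, the Aubin--Lions lemma on each cylinder $(0,T)\times(-R,R)$ together with a diagonal extraction produces a subsequence along which $v^\tau\to v^0$ and $u^\tau\to u^0$ strongly in $C([0,T];L^2(-R,R))$ and a.e., and weakly-$*$ in $L^\infty(0,\infty;H^2)$, while the uniform $L^2(0,\infty;H^2)$ bound gives $\Pi^\tau\rightharpoonup\Pi^0$ there. I would then pass to the limit in the weak formulation of \eqref{eq1}. Equation $\eqref{eq1}_1$ is linear and passes at once, giving $v^0_t=u^0_x$. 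In $\eqref{eq1}_2$ one writes $p(v^\tau)_x=p'(v^\tau)v^\tau_x$; since $v^\tau\to v^0$ strongly (so $p(v^\tau)\to p(v^0)$ in $L^1_{loc}$ by dominated convergence) and $v^\tau_x\rightharpoonup v^0_x$, the product converges weakly to $p'(v^0)v^0_x$, and $\Pi^\tau_x\rightharpoonup\Pi^0_x$, yielding $u^0_t+p(v^0)_x=\Pi^0_x$. In $\eqref{eq1}_3$ the singular term satisfies $\tau\Pi^\tau_t\to0$ in $\mathcal D'$, because $\tau\Pi^\tau=\sqrt\tau\,[\sqrt\tau\,(\Pi^\tau-\bar\Pi^\tau)]+\tau\bar\Pi^\tau\to0$; moreover $v^\tau\Pi^\tau\rightharpoonup v^0\Pi^0$, being the product of a strongly convergent and a weakly convergent sequence in $L^2_{loc}$, and $\mu u^\tau_x\rightharpoonup\mu u^0_x$; hence in the limit $v^0\Pi^0=\mu(u^0)_x$, that is $\Pi^0=\mu(u^0)_x/v^0$. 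Substituting this into the limit of $\eqref{eq1}_2$ gives exactly \eqref{1}. The initial data pass to the limit thanks to the strong convergence and the time-regularity of $(v^\tau,u^\tau)$, so that $(v^0,u^0)(0,\cdot)$ is the weak limit of $(v^\tau_0,u^\tau_0)$. Finally, because the classical system \eqref{1} with this datum admits a unique solution in the class $L^\infty(0,\infty;H^2)$ (by the well-posedness theory underlying \cite{WY}), the limit does not depend on the subsequence, so the whole family converges; this also transports the uniform bounds to the limit, giving $(v^0,u^0)\in L^\infty(0,\infty;H^2)$ and $\Pi^0\in L^2(0,\infty;H^2)$.

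The crux is the uniformity step: one must check that the \emph{entire} a priori estimate in the proof of Theorem \ref{th1} --- in particular the higher-order energy estimates introduced there to recover the missing dissipation of $(v-\widetilde v)_x$ in the absence of a B--D entropy, which carry $\tau$-dependent weights --- remains quantitatively uniform as $\tau\to0$. Once that is in hand, the compactness-and-limit argument above is routine; its only structural ingredients are the $\sqrt\tau$-weight, used to kill the singular term $\tau\Pi^\tau_t$, and the \emph{strong} (not merely weak) $L^2_{loc}$ convergence of $v^\tau$, used to pass to the limit in the two nonlinear products $p(v^\tau)_x$ and $v^\tau\Pi^\tau$.
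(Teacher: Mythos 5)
Your proposal is correct and follows essentially the same route as the paper: uniform-in-$\tau$ a priori bounds on the error terms (with the $\sqrt{\tau}$-weight on the stress), Aubin--Lions compactness giving strong convergence of $(v^{\tau},u^{\tau})$ and weak convergence of $\Pi^{\tau}$, convergence of the $\tau$-dependent shock profile to the classical one, and passage to the limit in $\eqref{eq1}_3$ where $\tau\Pi^{\tau}_t\to0$ identifies $\Pi^0=\mu (u^0)_x/v^0$ and hence the limit system \eqref{1}. Your extra remarks (checking $\tau$-uniformity of the constants, which Proposition \ref{p1} already provides, and upgrading subsequence convergence to the whole family via uniqueness of the limit) are refinements of, not departures from, the paper's argument.
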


The paper is organized as follows. Some basic concept, including relative quantities, rarefaction wave, viscous shock wave and $a$-contraction with shifts theory  are given in Section 2. In Section 3, we reformulate the original problem and present a proposition of the a priori estimates (Proposition \ref{p1}) which gives the proof of Theorem \ref{th1} immediately. In Section 4, we give a proof  of Proposition \ref{p1}. Finally, in Section 5, we prove that the solutions of  relaxed system \eqref{eq1} converges globally in time to  that of  classical system \eqref{1}.

\textbf{Notations:}  $L^p(\mathbb R)$ and $W^{s,p}(\mathbb R)$  ($1\le p \le\infty$) denote the  usual Lebesgue  and Sobolev spaces over $\mathbb R$ with the norm $\|\cdot \|_{L^p}$ and $\|\cdot\|_{W^{s,p}}$, respectively. Note that, when $s=0$, $W^{0,p}=L^p$. For $p=2$, $W^{s, 2}$ are abbreviated to $H^s$ as usual.
Let $T$ and $B$ be a positive constant and a Banach space, respectively. $C^k(0,T; B)(k \ge 0 )$ denotes the space of $B$-valued $k$-times continuously differentiable functions on $[0,T]$, and $L^p(0,T; B)$ denotes the space of $B$-valued $L^p$-functions on $[0,T]$. The corresponding space $B$-valued functions on  $[0,\infty)$ are defined similarly.

\section{Preliminaries}

\subsection{Relative quantities}
We first introduce some  relative quantities. For any function $F$ defined on $R^+$, we define the associated relative quantity for $v,w\in R^+$ as
\[F(v|w)=F(v)-F(w)-F^{\prime}(w)(v-w).\]
For the pressure function $p(v)=v^{-\gamma}$, and the potential energy $H(v)=v^{1-\gamma}/(\gamma-1)$, we have the following lemma, see \cite{KV3, WY}.
\begin{lemma}\label{Le1}
For given constants $\gamma>1$, and $v_->0$, there exist constants $C,\delta_{\ast}>0$, such that the following holds true.
\begin{itemize}
\item[1)] For any $v,w$ such that $0<w<2v_-,0<v<3v_-$,
\[
|v-w|^2\leq CH(v|w),
\]
\[
|v-w|^2\leq Cp(v|w).
\]
\item[2)] For any $v,w>v_-/2$,
\[
|p(v)-p(w)|\leq C|v-w|.
\]
\item[3)] For any $0<\delta<\delta_{\ast}$, and for any $(v,w)\in \mathbb{R}^2_+$ satisfying $|p(v)-p(w)|<\delta$, and $|p(w)-p(v_-)|<\delta$, the following holds true:
\[
p(v|w)\leq\left(\frac{\gamma+1}{2\gamma}\frac{1}{p(w)}+c\delta\right)|p(v)-p(w)|^2,
\]
\[
H(v|w)\geq\frac{p(w)^{-\frac{1}{\gamma}-1}}{2\gamma}|p(v)-p(w)|^2-\frac{1+\gamma}{3\gamma^2}p(w)^{-\frac{1}{\gamma}-2}(p(v)-p(w))^3,
\]
\[
H(v|w)\leq\left(\frac{p(w)^{-\frac{1}{\gamma}-1}}{2\gamma}+c\delta\right)|p(v)-p(w)|^2.
\]

\end{itemize}
\end{lemma}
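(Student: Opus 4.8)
The plan is to reduce all three items to elementary one-variable calculus estimates obtained by Taylor expansion of $p$ and $H$ around the reference point $w$, keeping careful track of how the remainder constants depend on $v_-$ through the a priori bounds on $v$ and $w$. For item 1), I would note that since $p(v)=v^{-\gamma}$ and $H(v)=v^{1-\gamma}/(\gamma-1)$ are smooth and strictly convex on any compact subinterval of $(0,\infty)$, the relative quantity $F(v|w)$ equals $\tfrac12 F''(\xi)(v-w)^2$ for some $\xi$ between $v$ and $w$; on the region $0<w<2v_-$, $0<v<3v_-$ the intermediate value $\xi$ lies in $(0,3v_-)$, so $F''(\xi)=F''$ is bounded below by a positive constant $c(\gamma,v_-)$ precisely because $F''$ is monotone and stays away from $0$ on $(0,3v_-]$ — this gives $|v-w|^2\le C\,F(v|w)$ with $C=2/c$. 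One subtlety: $p''(\xi)=\gamma(\gamma+1)\xi^{-\gamma-2}$ and $H''(\xi)=\gamma\xi^{-\gamma-1}$ blow up as $\xi\to 0^+$, which is harmless here since we only need a lower bound; but it means the constant genuinely depends on $v_-$, so I would state this dependence explicitly. Item 2) is the mean value theorem for $p$ alone: $|p(v)-p(w)|=|p'(\eta)|\,|v-w|$ with $\eta$ between $v$ and $w$, and on $\{v,w>v_-/2\}$ one has $|p'(\eta)|=\gamma\eta^{-\gamma-1}\le \gamma(v_-/2)^{-\gamma-1}=:C$.

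Item 3) is the only part requiring real work, and it is where I expect the main obstacle to lie: the estimates are sharp to second order in the variable $p(v)-p(w)$, with the \emph{exact} leading coefficient $\frac{\gamma+1}{2\gamma}\frac{1}{p(w)}$ (resp. $\frac{p(w)^{-1/\gamma-1}}{2\gamma}$) appearing, so a crude Taylor bound is not enough — I must change the working variable from $v$ to $P:=p(v)$ and expand in $P$ about $P_w:=p(w)$. Writing $v=P^{-1/\gamma}$, one computes $H$ as a function of $P$, namely $\hat H(P)=\frac{1}{\gamma-1}P^{-(1-\gamma)/\gamma}$, and similarly expresses $p(v|w)$ and $H(v|w)$ through $\hat H$ and its derivatives at $P_w$. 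Taylor expansion of $\hat H$ to second order gives the quadratic main term with coefficient $\tfrac12\hat H''(P_w)$, which one checks equals $\frac{1}{2\gamma}P_w^{-1/\gamma-1}$, matching the claimed $H$-bounds; the cubic correction term $\frac16\hat H'''(\theta)(P-P_w)^3$ produces the $-\frac{1+\gamma}{3\gamma^2}p(w)^{-1/\gamma-2}(p(v)-p(w))^3$ term after bounding $\hat H'''$ on the range allowed by $|P-P_w|<\delta$, $|P_w-p(v_-)|<\delta$ and absorbing the error into $c\delta$. For the $p(v|w)$ bound one does the analogous computation for $\hat p(P):=P$ expressed relative to $v$, or more directly writes $p(v|w)=P-P_w-p'(w)(v-w)$ and substitutes $v-w=\hat v(P)-\hat v(P_w)$ with $\hat v(P)=P^{-1/\gamma}$, Taylor-expanding $\hat v$ to second order; the second-order coefficient $-\tfrac12 p'(w)\hat v''(P_w)$ works out to $\frac{\gamma+1}{2\gamma}\frac{1}{p(w)}$, and the remainder is again $O(\delta)|P-P_w|^2$.

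The technical care needed is: (i) the smallness condition $\delta<\delta_*$ with $\delta_*=\delta_*(\gamma,v_-)$ must be chosen so that $|p(v)-p(w)|<\delta$ and $|p(w)-p(v_-)|<\delta$ force both $p(v)$ and $p(w)$ into a fixed compact subinterval of $(0,\infty)$ bounded away from $0$ and $\infty$ (hence $v,w$ in a compact subinterval of $(0,\infty)$), so that all the derivatives $\hat H''$, $\hat H'''$, $\hat v''$, $\hat v'''$ are bounded there and $P_w^{-1/\gamma-1}$ etc.\ are comparable to $p(v_-)^{-1/\gamma-1}$; (ii) the Taylor remainders, which a priori involve an intermediate point $\theta$ between $P$ and $P_w$, must be rewritten so that the explicit coefficient is evaluated exactly at $P_w$ and all $\theta$-dependence is pushed into the $c\delta$ error — this uses $|\theta-P_w|\le|P-P_w|<\delta$ together with Lipschitz continuity of the coefficient functions. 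I would present items 1) and 2) in two or three lines each and then carry out item 3) as the substitution-plus-Taylor computation sketched above, citing \cite{KV3, WY} for the fact that this is a known estimate and that the constant $c$ is universal. No new ideas beyond bookkeeping are required once the change of variable $v\mapsto p(v)$ is made; the danger is purely in getting the explicit constants right.
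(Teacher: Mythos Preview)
Your approach is correct and is exactly the standard one; the paper itself does not prove this lemma but simply refers to \cite{KV3, WY}, where the same Taylor-expansion-in-$p$ argument is carried out. One small caution for when you write out item 3): the function whose second derivative at $P_w$ yields the coefficient $\frac{1}{2\gamma}P_w^{-1/\gamma-1}$ is not $\hat H(P)=\frac{1}{\gamma-1}P^{(\gamma-1)/\gamma}$ itself (indeed $\hat H''(P_w)=-\frac{1}{\gamma^2}P_w^{-1/\gamma-1}<0$), but the composition $G(P):=H(v(P)\,|\,w)$, which satisfies $G(P_w)=G'(P_w)=0$ and $G''(P_w)=H''(w)(v'(P_w))^2=\frac{1}{\gamma}P_w^{-1/\gamma-1}$; keep this straight and your computation goes through verbatim.
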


\subsection{Rarefaction wave, viscous shock wave and composite waves}

We first recall the 1-rarefaction wave by considering the Riemann problem for the inviscid Burgers equation:
\begin{equation}\label{eq19}
\begin{cases}
w_t+ww_x=0,\\
w(0,t)=w^r_0(x)=\begin{cases}
w_-,\quad x<0,\\
w_m,\quad x>0.\\
\end{cases}
\end{cases}
\end{equation}
If $w_-<w_m$, then \eqref{eq19} has a  solution $w^r(t,x)=w^r(x/t)$ given by
\[
w^r(t,x)=w^r(x/t)=
\begin{cases}
w_-,\quad x<w_-t,\\
\frac{x}{t},\quad w_-t\leq x\leq w_mt,\\
w_m,\quad x>w_mt.
\end{cases}
\]
The 1-rarefaction wave $(v^r,u^r)(t,x)=(v^r,u^r)(x/t)$ to the Riemann problem \eqref{eq3}-\eqref{eq4} is given by
\[
\left.
\begin{aligned}
&\lambda_1(v^r(\frac{x}{t}))=w^r(\frac{x}{t})),\\
&z_1(v^r(\frac{x}{t}),u^r(\frac{x}{t}))=z_1(v_-,u_-)=z_1(v_m,u_m),
\end{aligned}
\right.
\]
where $\lambda_1(v)=-\sqrt{-p^{\prime}(v)}$ and $z_1(v,u)=u+\int^v\lambda_1(s)ds$ is called the 1-Riemann invariant to $p$-system.
It is easy to check that the 1-rarefaction wave $(v^r,u^r)(x/t)$ satisfies the Euler system $a.e.$ for $t>0$,
\[
\begin{cases}
v^r_t-u^r_x=0,\\
u^r_t+p(v^r)_x=0.
\end{cases}
\]
Let $\delta_R:=|v_m-v_-|$ denote the strength of the rarefaction wave. Notice that $\delta_R\sim|u_m-u_-|$.

Now, we show the existence of traveling wave solutions (viscous shock wave) connecting $(v_m,u_m)$ and $(v_+,u_+)$ for systems \eqref{eq1}.
Let $\xi=x-\sigma t$ with $\sigma^2=\frac{p(v_m)-p(v_+)}{v_+-v_m}$ be the speed of shock wave. Assume the functions $(\widetilde{u}^S,\widetilde{v}^S,\widetilde{\Pi}^S)(\xi)$ satisfy
\begin{equation}\label{eq24}
\begin{aligned}
(\widetilde{u}^S,\widetilde{v}^S,\widetilde{\Pi}^S)(\xi)\rightarrow(v_+,u_+,0),\quad(\xi\rightarrow+\infty),\\
(\widetilde{u}^S,\widetilde{v}^S,\widetilde{\Pi}^S)(\xi)\rightarrow(v_m,u_m,0),\quad(\xi\rightarrow-\infty).
\end{aligned}
\end{equation}
 Plugging the form $(\widetilde{u}^S,\widetilde{v}^S,\widetilde{\Pi}^S)(\xi)$ into system \eqref{eq1}, we have the following ordinary differential equations
\begin{equation}\label{eq25}
\begin{cases}
-\sigma\widetilde{v}^S_{\xi}-\widetilde{u}^S_{\xi}=0,\\
-\sigma\widetilde{u}^S_{\xi}+p(\widetilde{v}^S)_{\xi}=\widetilde{\Pi}^S_{\xi},\\
-\sigma\tau\widetilde{\Pi}^S_{\xi}+\widetilde{v}^S\widetilde{\Pi}^S=\mu\widetilde{u}^S_{\xi},
\end{cases}
\end{equation}
with the far field condition \eqref{eq24}. Then integrating the equations $\eqref{eq25}_1$ and $\eqref{eq25}_2$ with respect to $\xi$, it holds
\begin{equation}\label{eq26}
\begin{cases}
\sigma\widetilde{v}^S+\widetilde{u}^S=\sigma v_m+u_m=\sigma v_++u_+,\\
\widetilde{\Pi}^S=-\sigma(\widetilde{u}^S-u_m)+(p(\widetilde{v}^S)-p(v_m)).
\end{cases}
\end{equation}
Substituting \eqref{eq26} and $\eqref{eq25}_1$ into $\eqref{eq25}_3$, we derive that
\begin{equation}\label{eq27}
\widetilde{v}^S_{\xi}=\frac{\widetilde{v}^Sh(\widetilde{v}^S)}{\mu\sigma+\tau\sigma h^{\prime}(\widetilde{v}^S)},
\end{equation}
where $h(\widetilde{v}^S)=\sigma^2(v_m-\widetilde{v}^S)+(p(v_m)-p(\widetilde{v}^S))$.

Let $(v_m,u_m)\neq(v_+,u_+)$ and under the assumption
\begin{equation}\label{eq28}
\tau<\inf\limits_{\widetilde{v}^S\in[v_m,v_+]}\frac{\mu}{|h^{\prime}(\widetilde{v}^S)|},
\end{equation}
we get the following lemma, see \cite{MNS, ZWH, KV3, WY}.
\begin{lemma}\label{Le2}
For any states $(v_m,u_m)$, $(v_+,u_+)$, and $\sigma>0$ satisfying R-H condition and Lax condition, there exists a positive constant $C$ independent of $\tau$ such that the following is true. The traveling wave solution $(\widetilde{u}^S,\widetilde{v}^S,\widetilde{\Pi}^S)(\xi)$ connecting $(v_m,u_m,0)$ and $(v_+,u_+,0)$ exists uniquely and satisfies
\[\widetilde{v}^S_{\xi}>0,\qquad\widetilde{u}^S_{\xi}<0,\]
and
\[
\begin{aligned}
&|\widetilde{v}^S(\xi)-v_m|\leq C\delta_S e^{-C\delta_S|\xi|},
|\widetilde{u}^S(\xi)-u_m|\leq C\delta_S e^{-C\delta_S|\xi|},\quad \xi<0,\\
&|\widetilde{v}^S(\xi)-v_+|\leq C\delta_S e^{-C\delta_S|\xi|},
|\widetilde{u}^S(\xi)-u_+|\leq C\delta_S e^{-C\delta_S|\xi|},\quad \xi>0,\\
&|\widetilde{\Pi}^S|\leq C\delta_S^2 e^{-C\delta_S|\xi|},\quad \quad
|(\widetilde{v}^S_{\xi},\widetilde{u}^S_{\xi},\widetilde{\Pi}^S_{\xi})|\leq C\delta_S^2e^{-C\delta_S|\xi|},\quad \forall\xi\in\mathbb{R},\\
&|\partial^k_{\xi}(\widetilde{v}^S,\widetilde{u}^S,\widetilde{\Pi}^S)|\leq C|(\widetilde{v}^S_{\xi},\widetilde{u}^S_{\xi},\widetilde{\Pi}^S_{\xi})| \quad \forall\xi\in\mathbb{R},
\end{aligned}
\]
for $k=2,3,4$, where $\delta_S$ denote the strength of the shock as $\delta_S:=|p(v_+)-p(v_m)|\sim|v_+-v_m|\sim|u_+-u_m|$.
\end{lemma}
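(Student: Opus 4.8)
The plan is to analyze the scalar ODE \eqref{eq27} for $\widetilde v^S$ and extract all the stated quantitative bounds, the remaining components $\widetilde u^S, \widetilde \Pi^S$ being recovered algebraically from \eqref{eq26}. First I would record that under the Lax condition the endpoints $v_m$ and $v_+$ are simple zeros of $h$: one checks $h(v_m)=0$, $h(v_+)=0$ (using $\sigma^2=(p(v_m)-p(v_+))/(v_+-v_m)$), and $h'(v)=-\sigma^2-p'(v)$, so $h'(v_m)<0<h'(v_+)$ for a 2-shock with $v_m<v_+$ — this is exactly the Lax entropy inequality. Under the standing assumption \eqref{eq28}, the denominator $\mu\sigma+\tau\sigma h'(\widetilde v^S)$ stays bounded away from $0$ on $[v_m,v_+]$, indeed it is comparable to $\mu\sigma$ uniformly in $\tau$ since $\tau|h'|<\mu$; this is the point where the $\tau$-uniformity of the constant $C$ comes from. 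Consequently the right-hand side of \eqref{eq27} is a smooth function of $\widetilde v^S$ that is positive on the open interval $(v_m,v_+)$ and vanishes to first order at the endpoints, so by the standard phase-plane / ODE argument there is a unique (up to translation) monotone heteroclinic orbit connecting $v_m$ at $\xi=-\infty$ to $v_+$ at $\xi=+\infty$, giving $\widetilde v^S_\xi>0$; then $\widetilde u^S_\xi=-\sigma\widetilde v^S_\xi<0$ from $\eqref{eq25}_1$ and \eqref{eq26}.

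Next I would derive the exponential decay rates. Near $\xi\to-\infty$, linearizing \eqref{eq27} about $v_m$ gives $\widetilde v^S_\xi \approx \frac{v_m h'(v_m)}{\mu\sigma+\tau\sigma h'(v_m)}(\widetilde v^S-v_m)$, and since $\delta_S\sim v_+-v_m$ one sees the linearization rate is of size $\sim\delta_S$ (because $h'(v_m)=-\sigma^2-p'(v_m)$ and $\sigma^2\to -p'(v_m)$ as $\delta_S\to 0$, so $h'(v_m)=O(\delta_S)$); similarly at $+\infty$. A Gronwall / barrier argument then upgrades the linear estimate to the global bound $|\widetilde v^S(\xi)-v_m|\le C\delta_S e^{-C\delta_S|\xi|}$ for $\xi<0$ and the analogous bound for $\xi>0$, with $C$ independent of $\tau$ by the uniform lower bound on the denominator. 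The bound on $\widetilde u^S$ follows from \eqref{eq26}; the bound $|\widetilde\Pi^S|\le C\delta_S^2 e^{-C\delta_S|\xi|}$ follows from the second line of \eqref{eq26} together with $|\widetilde u^S-u_m|\le C\delta_S$, $|p(\widetilde v^S)-p(v_m)|\le C|\widetilde v^S-v_m|$ (Lemma \ref{Le1}(2)) and the cancellation that makes the leading $O(\delta_S)$ terms combine into an $O(\delta_S^2)$-weighted exponential — concretely, $\widetilde\Pi^S = \sigma\widetilde v^S_\xi\tau h'(\widetilde v^S)/(\text{denominator}) \cdot(\dots)$, or more directly $\widetilde \Pi^S = -\sigma\widetilde u^S_\xi + p(\widetilde v^S)_\xi$ integrated, bounded by the product of two $O(\delta_S)$ factors. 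For the first derivatives, $\widetilde v^S_\xi$ is given by \eqref{eq27} which is (a smooth function of $\widetilde v^S$) times $h(\widetilde v^S)$, and $|h(\widetilde v^S)|\le C\delta_S|\widetilde v^S-v_m|$ near $v_m$ (resp. $\le C\delta_S|\widetilde v^S-v_+|$ near $v_+$) since $h$ has simple zeros with slopes of size $\delta_S$; combined with the already-established decay of $\widetilde v^S-v_m$ this yields $|\widetilde v^S_\xi|\le C\delta_S^2 e^{-C\delta_S|\xi|}$, and $|\widetilde u^S_\xi|,|\widetilde\Pi^S_\xi|$ follow by differentiating \eqref{eq26}.

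For the higher derivatives $k=2,3,4$ I would differentiate \eqref{eq27} repeatedly: each $\xi$-derivative of $\widetilde v^S$ equals a smooth function of $\widetilde v^S$ times a polynomial in $\widetilde v^S_\xi$, so inductively $\partial_\xi^k \widetilde v^S$ is controlled by $|\widetilde v^S_\xi|$ (and hence by $C|(\widetilde v^S_\xi,\widetilde u^S_\xi,\widetilde\Pi^S_\xi)|$), which is the last claimed inequality; the same structural argument applies to $\widetilde u^S$ and $\widetilde\Pi^S$ via \eqref{eq26} and $\eqref{eq25}_3$. The main obstacle, and the only genuinely new point compared with the classical case in \cite{MNS, WY}, is obtaining all constants \emph{uniformly in $\tau$}: one must everywhere use assumption \eqref{eq28} to keep $\mu\sigma+\tau\sigma h'(\widetilde v^S)\ge c\mu\sigma>0$ and keep the $\tau$-dependent correction terms (e.g.\ $\tau h'$) subordinate, so that the $\tau\to 0$ limit reduces smoothly to the classical profile ODE $\mu\sigma\,\widetilde v^S_\xi = \widetilde v^S h(\widetilde v^S)$. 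This uniformity is what makes the estimates usable in Theorem \ref{th1.2}.
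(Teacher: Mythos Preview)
Your approach is the standard one and is essentially what the paper does: it does not prove this lemma itself but simply cites \cite{MNS, ZWH, KV3, WY}, where the argument is exactly the phase-line analysis of the scalar ODE \eqref{eq27} that you outline.

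One correction, though: you have the signs of $h'$ at the endpoints reversed. With $h(v)=\sigma^2(v_m-v)+p(v_m)-p(v)$ and the Lax condition $\sqrt{-p'(v_+)}<\sigma<\sqrt{-p'(v_m)}$ for a $2$-shock, one gets $h'(v_m)=-\sigma^2-p'(v_m)>0$ and $h'(v_+)=-\sigma^2-p'(v_+)<0$, not the other way around. This is what makes $v_m$ the unstable rest point (the orbit leaves it as $\xi\to-\infty$) and $v_+$ the stable one; with your stated signs no heteroclinic from $v_m$ to $v_+$ would exist. The rest of your outline (positivity of $h$ on $(v_m,v_+)$ by concavity, the $O(\delta_S)$ size of $h'(v_m)$ and $h'(v_+)$ giving the exponential rate, the $\tau$-uniformity via \eqref{eq28}, and the inductive control of higher derivatives) is correct and goes through exactly as in the classical case. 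For the bound on $\widetilde\Pi^S$, a cleaner route than the one you sketch is to note directly from \eqref{eq26} that $\widetilde\Pi^S=-h(\widetilde v^S)$; since $|h'|\le C\delta_S$ uniformly on $[v_m,v_+]$ (by the mean-value argument you already use), this gives $|\widetilde\Pi^S|\le C\delta_S\min(|\widetilde v^S-v_m|,|\widetilde v^S-v_+|)\le C\delta_S^2 e^{-C\delta_S|\xi|}$ in one line.
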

\begin{remark}
Substituting $\eqref{eq25}_1$ and $\eqref{eq25}_2$ into $\eqref{eq25}_3$, we derive that
\[
\widetilde{\Pi}^S=\frac{-\sigma^3\tau+\sigma\tau p^{\prime}(\widetilde{v}^S)-\mu\sigma}{\widetilde{v}^S}\widetilde{v}^S_{\xi},
\]
and so
\begin{equation}\label{eq29}
|\widetilde{\Pi}^S|\sim|\widetilde{v}^S_{\xi}|.
\end{equation}
\end{remark}
\begin{remark}
Let $\tau=0$ in \eqref{eq27}, we have
\begin{equation}\label{eq30}
\widetilde{v}^S_{\xi}=\frac{\widetilde{v}^S[\sigma^2(v_m-\widetilde{v}^S)+(p(v_m)-p(\widetilde{v}^S))]}{\mu\sigma},
\end{equation}
which is the same as that obtained by Matsumura and Nishihara in \cite{MNS}.
\end{remark}

Finally, we consider the composite waves of the superposition  of a viscous shock and a rarefaction. We first remark that, for any given end states $(v_{\pm},u_{\pm},0)\in\mathbb{R}^+\times\mathbb{R}\times\mathbb{R}$ in \eqref{eq2}, there exists a unique intermediate state $(v_m,u_m,0)$ such that $(v_-,u_-,0)$ is connected to $(v_m,u_m,0)$ by the rarefaction wave, and the traveling wave connects $(v_m,u_m,0)$ and $(v_+,u_+,0)$, see \cite{SMO}. The composite waves of the superposition of a viscous shock and a rarefaction is defined as follows:
\begin{equation}\label{eq31}
\left(v^r(\frac{x}{t})+\widetilde{v}^S(x-\sigma t)-v_m,u^r(\frac{x}{t})+\widetilde{u}^S(x-\sigma t)-u_m,\widetilde{\Pi}^S(x-\sigma t)+\mu\frac{\left(u^r(\frac{x}{t})\right)_x}{v^r(\frac{x}{t})}\right).
\end{equation}

\subsection{Construction of approximate rarefaction wave}
We construct smooth approximate solution of the $1$-rarefaction wave by using the smooth solutions to the Burgers equation:
\begin{equation}\label{eq32}
\begin{cases}
w_t+ww_x=0,\\
w(0,x)=w_0(x)=\frac{w_m+w_-}{2}+\frac{w_m-w_-}{2}k_q\int_0^{\epsilon x}(1+y^2)^{-q}\dif y,
\end{cases}
\end{equation}
where $\epsilon>0$ is a constant as well as $k_q$  satisfying $k_q\int_0^{\infty}(1+y^2)^{-q}\dif y=1$ for $q>\frac{3}{2}$. Then we have the following lemma.
\begin{lemma}\label{Le3}
Let $w_-<w_m$ and set $\widetilde{w}=w_m-w_-$. then there exists a smooth solution $w(t,x)$ of the system \eqref{eq32} satisfying
\begin{itemize}
\item[1)] $w_-<w(t,x)<w_m,w_x>0$ for $x\in \mathbb{R}$ and $t\geq0$.
\item[2)] The following estimates hold for all $t>0$ and $1\leq p\leq\infty$:
\begin{equation}\nonumber
\begin{aligned}
&\|w_x(t,\cdot)\|_{L^p}\leq C\min\left(\epsilon^{1-\frac{1}{p}}\widetilde{w},\widetilde{w}^{\frac{1}{p}}t^{-1+\frac{1}{p}}\right),\\
&\|\partial^k_xw(t,\cdot)\|_{L^p}\leq C\min\left(\epsilon^{2-\frac{1}{p}}\widetilde{w},\epsilon^{(1-\frac{1}{2p})(1-\frac{1}{p})}\widetilde{w}^{-\frac{p-1}{2pq}}t^{-1-\frac{p-1}{2pq}}\right),
\end{aligned}
\end{equation}
where $k=2, 3, 4$.
\item[3)] If $w_->0$, then it holds that $\forall t\geq0,x\in \mathbb{R}$,
 \begin{equation}\nonumber
\begin{aligned}
&|w(t,x)-w_-|\leq C\widetilde{w}(1+(\epsilon x)^2)^{-\frac{q}{3}}(1+(\epsilon w_-t)^2)^{-\frac{q}{3}},\\
&|w_x(t,x)|\leq C\epsilon\widetilde{w}(1+(\epsilon x)^2)^{-\frac{q}{2}}(1+(\epsilon w_-t)^2)^{-\frac{q}{2}}.
\end{aligned}
\end{equation}
\item[4)] If $w_m<0$, then it holds that $\forall t\geq0,x\in \mathbb{R}$,
 \begin{equation}\nonumber
\begin{aligned}
&|w(t,x)-w_m|\leq C\widetilde{w}(1+(\epsilon x)^2)^{-\frac{q}{3}}(1+(\epsilon w_mt)^2)^{-\frac{q}{3}},\\
&|w_x(t,x)|\leq C\epsilon\widetilde{w}(1+(\epsilon x)^2)^{-\frac{q}{2}}(1+(\epsilon w_mt)^2)^{-\frac{q}{2}}.
\end{aligned}
\end{equation}
\item[5)] $\lim\limits_{t\rightarrow\infty}\sup\limits_{x\in R}|w(t,x)-w^r(x/t)|=0$.
\end{itemize}
\end{lemma}

Next, we construct the smooth approximate 1-rarefaction wave $(\widetilde{v}^R,\widetilde{u}^R)(t,x)$ of  $(v^r,u^r)(x/t)$ by
\begin{equation}\label{eq33}
\begin{aligned}
&\lambda_1(v_-)=w_-,\lambda_1(v_m)=v_m,\\
&\lambda_1(\widetilde{v}^R(t,x))=w(t,x),\\
&z_1(\widetilde{v}^R,\widetilde{u}^R)(t,x)=z_1(v_-,u_-)=z_1(v_m,u_m),
\end{aligned}
\end{equation}
where $w(t,x)$ is the smooth solution to the Burgers equation in \eqref{eq32}. It is easy to check that the above approximate rarefaction wave $(\widetilde{v}^R,\widetilde{u}^R)$ satisfy the Euler system for $t>0$:
\begin{equation}\label{eq34}
\begin{cases}
\widetilde{v}^R_t-\widetilde{u}^R_x=0,\\
\widetilde{u}^R_t+p(\widetilde{v}^R)_x=0.
\end{cases}
\end{equation}

For estimates of $(\widetilde{v}^R,\widetilde{u}^R)$ , we have the following lemma, see \cite{MNR1, MNR2, NNK, XFH}

\begin{lemma}\label{Le4}
The smooth approximate 1-rarefaction wave $(\widetilde{v}^R,\widetilde{u}^R)(t,x)$ defined in \eqref{eq33} satisfies the following properties.
\begin{itemize}
\item[1)] $\widetilde{v}^R_t>0$, $\forall(t,x)\in \mathbb{R}^+\times \mathbb{R}$.
\item[2)] There exists a constant C such that
$$|\widetilde{v}^R_x|\leq C\widetilde{v}^R_t, \quad \widetilde{v}^R_t\leq C\epsilon\delta_R, \quad \forall(t,x)\in \mathbb{R}^+\times \mathbb{R}.$$
\item[3)] $\forall t>0$,
$$\|\widetilde{v}^R_x\|_{L^p},\|\widetilde{u}^R_x\|_{L^p}\leq C\min\{\delta_R\epsilon^{1-\frac{1}{p}},\delta_R^{\frac{1}{p}}(1+t)^{-1+\frac{1}{p}}\}.$$
\item[4)] $\forall t>0$ and $k=2,3,4$,
\begin{align*}
&\|\partial_x^k\widetilde{v}^R\|_{L^p},\|\partial^k_x\widetilde{u}^R\|_{L^p}\leq C\left(\delta_R^{-\frac{p-1}{2pq}}\epsilon^{(1-\frac{1}{2q})(1-\frac{1}{p})}(1+t)^{-1-\frac{p-1}{2pq}}
+\delta_R^{\frac{1}{p}}(1+t)^{-2+\frac{1}{p}}\right),
\end{align*}
and $\forall p\geq1$,
$$\int_0^{\infty}(\|\partial_x^k\widetilde{v}^R\|_{L^p},\partial^k_x\|\widetilde{u}^R\|_{L^p})dt\leq C\delta_R^{-\frac{p-1}{2pq}}.$$
\item[5)] For $x\geq0,t\geq0,$ it holds that
\begin{align*}
&|(\widetilde{v}^R,\widetilde{u}^R)(t,x)-(v_m,u_m)|\leq C\delta_Re^{-2(|x|+|\lambda_1(v_m)|t)},\\
&|(\widetilde{v}^R_x,\widetilde{u}^R_x)(t,x)|\leq C\delta_Re^{-2(|x|+|\lambda_1(v_m)|t)}.
\end{align*}
\item[6)] For $x\leq\lambda_1(v_-)t<0$ and $t\geq0$, it holds that
\begin{align*}
&|(\widetilde{v}^R,\widetilde{u}^R)(t,x)-(v_-,u_-)|\leq C\delta_Re^{-2|x-\lambda_1(v_-)t)|},\\
&|(\widetilde{v}^R_x,\widetilde{u}^R_x)(t,x)|\leq C\delta_Re^{-2|x-\lambda_1(v_-)t)|}.
\end{align*}
\item[7)] $\lim\limits_{t\rightarrow\infty}\sup\limits_{x\in R}|(\widetilde{v}^R,\widetilde{u}^R)(t,x)-(v^r,u^r)(x/t)|=0$.
\end{itemize}
\end{lemma}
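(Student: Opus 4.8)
The plan is to observe that the definition \eqref{eq33} is nothing but a smooth, strictly monotone change of unknowns applied to the Burgers profile $w$ of Lemma \ref{Le3}, so that every assertion about $(\widetilde v^R,\widetilde u^R)$ can be pulled back to a corresponding property of $w$. For the $\gamma$-law pressure one has $\lambda_1(v)=-\sqrt{-p'(v)}=-\sqrt{\gamma}\,v^{-(\gamma+1)/2}$, which is smooth, strictly increasing in $v$, and negative; hence $\lambda_1^{-1}$ is a well-defined smooth function, and on the compact interval $[w_-,w_m]$ (recall $w_-=\lambda_1(v_-)<w_m=\lambda_1(v_m)<0$) all of its derivatives $(\lambda_1^{-1})^{(j)}$ are bounded, with $(\lambda_1^{-1})'>0$ bounded below. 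Setting $\widetilde v^R:=\lambda_1^{-1}(w)$ yields a smooth function, and the Riemann-invariant constraint forces $\widetilde u^R=z_1(v_-,u_-)-\int^{\widetilde v^R}\lambda_1(s)\,\dif s$, so that $\widetilde u^R_x=-\lambda_1(\widetilde v^R)\,\widetilde v^R_x=-w\,\widetilde v^R_x$ and $\widetilde u^R_t=-w\,\widetilde v^R_t$.

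First I would check \eqref{eq34}. Differentiating $\widetilde v^R=\lambda_1^{-1}(w)$ and using $w_t=-ww_x$ gives $\widetilde v^R_t=-(\lambda_1^{-1})'(w)\,w\,w_x=-w\,\widetilde v^R_x=\widetilde u^R_x$, which is the first Euler equation; for the second, $p(\widetilde v^R)_x=p'(\widetilde v^R)\widetilde v^R_x=-\lambda_1(\widetilde v^R)^2\widetilde v^R_x=-w^2\widetilde v^R_x$, while $\widetilde u^R_t=-w\widetilde v^R_t=w^2\widetilde v^R_x$, so $\widetilde u^R_t+p(\widetilde v^R)_x=0$. The same identity $\widetilde v^R_t=-(\lambda_1^{-1})'(w)\,w\,w_x$, combined with $(\lambda_1^{-1})'>0$, $w<0$ and $w_x>0$ from Lemma \ref{Le3} 1), gives $\widetilde v^R_t>0$, which is 1); and $|\widetilde v^R_x|=|w|^{-1}\widetilde v^R_t\le C\widetilde v^R_t$ since $|w|\ge|w_m|>0$, while $\widetilde v^R_t\le C\|w_x\|_{L^\infty}\le C\epsilon\widetilde w\le C\epsilon\delta_R$ by Lemma \ref{Le3} 2) and $\widetilde w\sim\delta_R$, which is 2).

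For the quantitative bounds 3)--4), the Fa\`a di Bruno formula expresses $\partial_x^k\widetilde v^R$ as a finite sum of products of the bounded factors $(\lambda_1^{-1})^{(j)}(w)$ with products of $\partial_x^\ell w$, $1\le\ell\le k$; hence $\|\partial_x^k\widetilde v^R\|_{L^p}$ and $\int_0^\infty\|\partial_x^k\widetilde v^R\|_{L^p}\,\dif t$ are controlled by the corresponding quantities in Lemma \ref{Le3} 2), using the harmless identification $\widetilde w^{-(p-1)/(2pq)}\sim\delta_R^{-(p-1)/(2pq)}$. The bounds for $\widetilde u^R$ then follow from $\widetilde u^R_x=-w\widetilde v^R_x$ by the Leibniz rule, $w$ and its derivatives being again controlled by Lemma \ref{Le3}. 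For the localized estimates 5)--6) I would use that, since $w<0$, the rarefaction fan $\{\lambda_1(v_-)t\le x\le\lambda_1(v_m)t\}$ lies in $x<0$: to the right of it (in particular for $x\ge0$) the Burgers profile has essentially reached $w_m$, and to the left of it ($x\le\lambda_1(v_-)t$) it has essentially reached $w_-$; tracking the characteristics of \eqref{eq32} emanating from the portions of the datum where $w_0$ is close to $w_m$, resp.\ to $w_-$, yields the stated decay of $w-w_m$ in $|x|+|\lambda_1(v_m)|t$ and of $w-w_-$ in $|x-\lambda_1(v_-)t|$, which then transfer to $\widetilde v^R$ through $\lambda_1^{-1}$ and to $\widetilde u^R$ through $\widetilde u^R-u_m=-\int_{v_m}^{\widetilde v^R}\lambda_1(s)\,\dif s$. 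Finally, 7) is immediate from Lemma \ref{Le3} 5) together with the uniform continuity of $\lambda_1^{-1}$ and of $v\mapsto\int^v\lambda_1$ on compact sets. The only genuinely delicate point is the sharp edge decay 5)--6), which requires the explicit characteristic structure of the Burgers flow rather than a soft argument; the remaining items are chain-rule bookkeeping on top of Lemma \ref{Le3}.
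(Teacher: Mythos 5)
Note first that the paper does not actually prove this lemma: it is quoted from the references \cite{MNR1, MNR2, NNK, XFH}, so your argument has to stand on its own. Your reduction of everything to the Burgers profile $w$ through the diffeomorphism $\lambda_1^{-1}$ and the $1$-Riemann invariant is indeed the standard route, and items 1), 2), 3), 7), the verification of \eqref{eq34}, and the Fa\`a di Bruno bookkeeping for 4) are essentially correct, modulo small points you gloss over (converting $t^{-1+\frac1p}$ into $(1+t)^{-1+\frac1p}$ by invoking the $\epsilon$-bound for $t\le 1$; and the time-integrated bound in 4) is not automatic for the product terms such as $\|w_x\|_{L^{2p}}^2$, whose two bounds $\min(\epsilon^{2-\frac1p}\widetilde w^2,\widetilde w^{\frac1p}t^{-2+\frac1p})$ are only borderline integrable when $p=1$).

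The genuine gap is in 5)--6). You assert that tracking Burgers characteristics ``yields the stated decay'' $e^{-2(|x|+|\lambda_1(v_m)|t)}$, but characteristics transport $w$ unchanged, so for $x\ge 0$ one only gets $|w(t,x)-w_m|=|w_0(x_0)-w_m|$ with foot point $x_0\ge x+|w_m|t$, and the datum in \eqref{eq32} has merely algebraic tails, $|w_0(x)-w_m|\sim C\widetilde w\,(\epsilon x)^{1-2q}$. This is precisely why Lemma \ref{Le3} 3)--4) record polynomial, not exponential, localization; already at $t=0$ the claimed bound $|\widetilde v^R(0,x)-v_m|\le C\delta_R e^{-2|x|}$ fails for large $x$ under the construction \eqref{eq32}--\eqref{eq33}. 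The exponential rates in 5)--6) belong to the hyperbolic-tangent mollification of the Riemann datum used in \cite{WY}, whereas \eqref{eq32} is the algebraic mollifier of the Matsumura--Nishihara/\cite{NNK,XFH} line; the paper has silently mixed the two constructions. So your sketch for 5)--6) would not go through as written: you must either replace the mollifier in \eqref{eq32} by the tanh-type datum (and then recheck 2)--4), whose $\epsilon$- and $q$-dependent constants change), or weaken 5)--6) to the algebraic localized bounds that Lemma \ref{Le3} actually provides, and verify that these still suffice for the interaction estimates (Lemma \ref{Le9}) where 5)--6) are used.
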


\subsection{Construction of shift}
As in \cite{WY}, the $L^2$-estimates in Sec.4 depend closely  on the shift function $X(t)$, which is defined as a solution to the ODE:
\begin{equation}\label{eq35}
\begin{cases}
\begin{aligned}
\dot{X}(t)=-\frac{M}{\delta_S}[&\int_\mathbb{R}\frac{a(\xi-X)}{\sigma}\widetilde{u}^S_{\xi}(\xi-X)(p(v)-p(\widetilde{v}_{-X}))\dif \xi\\
           -&\int_\mathbb{R}a(\xi-X)p(\widetilde{v}^S(\xi-X)_{\xi}(v-\widetilde{v}_{-X})\dif \xi],\\
\end{aligned}\\
X(0)=0,
\end{cases}
\end{equation}
where $M$ is a specific constant chosen as $M:=\frac{5(\gamma+1)\sigma_m^3}{8\gamma p(v_m)}$ with $\sigma_m:=\sqrt{-p^{\prime}(v_m)}$ and $\widetilde{v}_{-X}(t,\xi)=\widetilde{v}^R(t,\xi+\sigma t)+\widetilde{v}^S(\xi-X(t))-v_m$.  The weight function $a$  is defined by
\begin{equation}\label{eq43}
 a(\xi):=1+\frac{\lambda}{\delta_S}(p(v_m)-p(\widetilde{v}^S(\xi)),
\end{equation}
where the constant $\lambda$ is chosen to be so small but far bigger than $\delta_S$ such that
\[\delta_S\ll\lambda\leq C\sqrt{\delta_S}.\]
Notice that
\begin{equation}\label{55}
1<a(\xi)<1+\lambda,
\end{equation}
and
\begin{equation}\label{56}
a^{\prime}(\xi)=-\frac{\lambda}{\delta_S}p^{\prime}(\widetilde{v}^S)\widetilde{v}^S_{\xi}>0.
\end{equation}

We remark that the shift function above  is slightly different with that in \cite{WY} due to the unavailability of B-D entropy. Nonetheless,  using similar proof as in \cite{WY}, we can get that \eqref{eq35} has a unique absolutely continuous solution defined on any interval $[0,T]$ provided  $v(t,x)>0$ is bounded both above and below for any $(t,x)\in[0,T]\times \mathbb{R}$.

\section{Local solutions and a priori estimates}

To simplify our analysis, we rewrite the system \eqref{eq1} into the following system, by change of  variable  $(t,x)\rightarrow(t,\xi=x-\sigma t)$:
\begin{equation}\label{eq40}
\begin{cases}
v_t-\sigma v_{\xi}-u_{\xi}=0,\\
u_t-\sigma u_{\xi}+p_{\xi}=\Pi_{\xi},\\
\tau \Pi_t-\sigma\tau\Pi_{\xi}+v\Pi=\mu u_{\xi}.
\end{cases}
\end{equation}
By using the theory of symmetric hyperbolic system, we have the following local existence theory, see \cite{JR, TA, WY}.
\begin{theorem}\label{th3.1}
Let $\underline{v}$ and $\underline{u}$ be smooth monotone functions such that
\[
\underline{v}(x)=v_{\pm} \quad \underline{u}(x)=u_{\pm} \quad for \quad \pm x\geq1.
\]
For any constants $M_0,M_1,\kappa_1,\kappa_2,\kappa_3,\kappa_4$ with $M_1>M_0>0$ and $\kappa_1>\kappa_2>\kappa_3>\kappa_4>0$, there exists a constant $T_0>0$ such that if
\[
\begin{aligned}
&\|v_0-\underline{v}\|_{H^2}+\|u_0-\underline{u}\|_{H^2}+\sqrt{\tau}\|\Pi_0\|_{H^2}\leq M_0,\\
&0<\kappa_3\leq v_0(x)\leq\kappa_2,\qquad \forall x\in\mathbb{R},
\end{aligned}
\]
then the system \eqref{eq40} with initial condition \eqref{eq2} has a unique solution $(v, u, \Pi)$ on $[0,T_0]$ such that
\[
\begin{aligned}
(v-\underline{v}, u-\underline{u}, \Pi)\in C([0,T_0];H^2),
\end{aligned}
\]
and
\[
\|v-\underline{v}\|_{L^{\infty}(0,T_0;H^2)}+\|u-\underline{u}\|_{L^{\infty}(0,T_0;H^2)}
+\sqrt{\tau}\|\Pi\|_{L^{\infty}(0,T_0;H^2)}\leq M_1.
\]
Moreover:
\[
\kappa_4\leq v(t,x)\leq\kappa_1,\qquad \forall(t,x)\in[0,T_0]\times\mathbb{R}.
\]
\end{theorem}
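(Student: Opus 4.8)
The statement to prove is the local existence Theorem \ref{th3.1} for the shifted relaxed system \eqref{eq40}. The plan is to reduce it to a standard quasilinear symmetric hyperbolic Cauchy problem and invoke the classical Kato--Majda theory.

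First I would symmetrize the system. Working with the unknown $W=(v-\underline v,\,u-\underline u,\,\Pi)^{\mathsf T}$, the equations \eqref{eq40} read, schematically, $A_0(v)W_t+A_1(v,u)W_\xi+B(v)W=F(\underline v,\underline u)$, where the lower-order source $F$ collects the derivatives $\sigma\underline v_\xi+\underline u_\xi$, $\sigma\underline u_\xi-p(\underline v)_\xi$, which are smooth and compactly supported (hence in $H^k$ for every $k$), and where one must divide the third equation by $\tau$ (legitimate since $\tau>0$ is fixed). A symmetrizer $A_0$ can be taken diagonal with a weight in the $\Pi$-slot that accounts for the $1/\tau$ scaling and makes the flux matrix symmetric; concretely one checks that with an appropriate positive diagonal $A_0(v)$ depending smoothly on $v$ and bounded above and below on the relevant range $[\kappa_4/2, 2\kappa_1]$ of $v$, the matrix $A_0 A_1$ is symmetric. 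Because $\tau$ is a fixed positive number, the relaxation term $v\Pi/\tau$ is a bounded zeroth-order perturbation and does not affect well-posedness (it would only matter for $\tau$-uniform estimates, which are not claimed here).

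Next I would invoke the standard local existence theorem for symmetric hyperbolic systems in $H^s$, $s>1/2+1=3/2$ in one space dimension, so $s=2$ suffices (see \cite{JR, TA} or Majda's book). Given the bound $\|v_0-\underline v\|_{H^2}+\|u_0-\underline u\|_{H^2}+\sqrt\tau\|\Pi_0\|_{H^2}\le M_0$ and the pointwise bounds $0<\kappa_3\le v_0\le\kappa_2$, the background state $\underline v$ (which itself satisfies comparable pointwise bounds by construction, after possibly enlarging the constants) guarantees that $A_0$ is uniformly positive definite on a neighborhood of the data. The abstract theorem then produces $T_0>0$, depending only on $M_0$, on the ellipticity constants of $A_0$ — i.e. on $\kappa_2,\kappa_3$ — and on $\tau$, together with a solution $W\in C([0,T_0];H^2)\cap C^1([0,T_0];H^1)$; unraveling the change of unknowns gives $(v-\underline v, u-\underline u,\Pi)\in C([0,T_0];H^2)$. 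Shrinking $T_0$ if necessary, a continuity/energy argument (Gr\"onwall applied to the basic $H^2$ energy inequality, using $\|A_0^{-1}\|$, $\|F\|_{H^2}$, and Sobolev embedding $H^2\hookrightarrow W^{1,\infty}$) yields the a priori bound $\|v-\underline v\|_{L^\infty(0,T_0;H^2)}+\|u-\underline u\|_{L^\infty(0,T_0;H^2)}+\sqrt\tau\|\Pi\|_{L^\infty(0,T_0;H^2)}\le M_1$ once $M_1>M_0$, and the pointwise bound $\kappa_4\le v(t,x)\le\kappa_1$ follows because $v-v_0=\int_0^t v_t$, $\|v_t\|_{L^\infty}\le \|v_t\|_{H^1}$ is controlled by $M_1$, so $\sup_x|v(t,x)-v_0(x)|\le C(M_1)t$, which is $\le\min\{\kappa_2-\kappa_1,\ \kappa_4-\kappa_3\}$ for $t\le T_0$ after a final shrinking of $T_0$.

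The only genuinely delicate point is bookkeeping the constants so that $T_0$ depends on the $\kappa_i$ and $M_0, M_1$ in the stated way and not circularly on the solution: one fixes the constant-coefficient ellipticity range $[\kappa_4,\kappa_1]$ first, derives the energy estimate on that range, and then checks a posteriori (via the $C(M_1)t$ bound on $v$) that the solution does not leave the range on $[0,T_0]$, closing the bootstrap. I expect this to be routine given the references \cite{JR, TA, WY}, and indeed the authors state it as a known result; the proof of the substantive Theorem \ref{th1} will instead rest on the global-in-time a priori estimates of Proposition \ref{p1}, for which this local theorem merely supplies the starting point and the continuation criterion ``$v$ bounded above and below.''
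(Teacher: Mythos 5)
Your proposal follows essentially the same route the paper relies on: the paper gives no proof of Theorem \ref{th3.1}, stating it as a consequence of the standard theory of symmetrizable hyperbolic systems (citing \cite{JR, TA, WY}), and your symmetrization (e.g. $A_0=\mathrm{diag}(-p'(v),1,\tau/\mu)$ after dividing the third equation by $\tau$), Kato--Majda local existence in $H^2$, and the bootstrap keeping $v$ in the range $[\kappa_4,\kappa_1]$ is exactly that argument. Only a trivial slip: the final smallness condition should read $C(M_1)T_0\leq\min\{\kappa_1-\kappa_2,\ \kappa_3-\kappa_4\}$ rather than $\min\{\kappa_2-\kappa_1,\ \kappa_4-\kappa_3\}$.
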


Next, we will focus on the time-asymptotic stability of the solutions to \eqref{eq40} with initial data \eqref{eq2} to the superposition of the approximate rarefaction wave and the traveling wave shifted by $X(t)$ (given by \eqref{eq35}) which is give by
\begin{align}
&(\widetilde{v}_{-X},\widetilde{u}_{-X},\widetilde{\Pi}_{-X})(t,\xi) := \label{eq41}\\
&\left(\widetilde{v}^R(t,\xi+\sigma t)+\widetilde{v}^S(\xi-X(t))-v_m,
           \widetilde{u}^R(t,\xi+\sigma t)+\widetilde{u}^S(\xi-X(t))-u_m,
           \widetilde{\Pi}^S(\xi-X(t))+\mu\frac{\widetilde{u}^R_{\xi}}{\widetilde{v}^R}\right). \nonumber
\end{align}
Denote $\left(\widetilde v_0( \cdot), \widetilde u_0( \cdot), \widetilde \Pi_0( \cdot)\right)=\left(\widetilde v_{-X}, \widetilde u_{-X}, \widetilde \Pi_{-X}\right)(t, \cdot)\Big|_{t=0}$. Then,  one can easily get
\begin{align}\label{hu4.11}
\sum\limits_{\pm}\left(\|(\widetilde v_0-v_{\pm},\widetilde u_0-u_{\pm}, \sqrt{\tau}\widetilde \Pi_0)\|_{H^2(\mathbb{R}_{\pm})}\right)
\le C (\delta_S+\delta_R).
\end{align}

We first give the equations satisfied by the above composite waves. It is  easy to check that the approximate rarefaction wave $(\widetilde{v}^R(t,\xi+\sigma t),\widetilde{u}^R(t,\xi+\sigma t))$ satisfies the following system:
\begin{equation}\label{eq42}
\begin{cases}
v_t-\sigma v_{\xi}-u_{\xi}=0,\\
u_t-\sigma u_{\xi}+p(v)_{\xi}=0.
\end{cases}
\end{equation}
Therefore, using \eqref{eq25} and \eqref{eq42}, we find that the approximated combination of waves $(\widetilde{v}_{-X},\widetilde{u}_{-X},\widetilde{\Pi}_{-X})$ defined in \eqref{eq41} solves the following system:
\begin{equation}\label{43}
\begin{cases}
(\widetilde{v}_{-X})_t-\sigma (\widetilde{v}_{-X})_{\xi}+\dot{X}(t)(\widetilde{v}^S)_{\xi}^{-X}-(\widetilde{u}_{-X})_{\xi}=0,\\
(\widetilde{u}_{-X})_t-\sigma (\widetilde{u}_{-X})_{\xi}+\dot{X}(t)(\widetilde{u}^S)_{\xi}^{-X}+p(\widetilde{v}_{-X})_{\xi}=(\widetilde{\Pi}_{-X})_{\xi}+\widetilde{F_1},\\
\tau (\widetilde{\Pi}_{-X})_t-\sigma\tau (\widetilde{\Pi}_{-X})_{\xi}+\tau\dot{X}(t)(\widetilde{\Pi}^S)_{\xi}^{-X}+(\widetilde{v}_{-X})(\widetilde{\Pi}_{-X})=\mu (\widetilde{u}_{-X})_{\xi}+\widetilde{F_2},
\end{cases}
\end{equation}
where $(\widetilde{v}^S)_{\xi}^{-X}=\widetilde{v}^S_{\xi}(\xi-X(t)),(\widetilde{u}^S)_{\xi}^{-X}=\widetilde{u}^S_{\xi}(\xi-X(t)),
(\widetilde{\Pi}^S)_{\xi}^{-X}=\widetilde{\Pi}^S_{\xi}(\xi-X(t))$ and
\begin{equation}\label{44}
\begin{aligned}
&\widetilde{F_1}=p(\widetilde{v}_{-X})_{\xi}-p(\widetilde{v}^R)_{\xi}-p((\widetilde{v}^S)^{-X})_{\xi}-\left(\mu\frac{\widetilde{u}^R_{\xi}}{\widetilde{v}^R}\right)_{\xi},\\
&\widetilde{F_2}=\tau\left(\mu\frac{\widetilde{u}^R_{\xi}}{\widetilde{v}^R}\right)_t-\sigma\tau\left(\mu\frac{\widetilde{u}^R_{\xi}}{\widetilde{v}^R}\right)_{\xi}
+(\widetilde{v}^R-v_m)(\widetilde{\Pi}^S)^{-X}
+\left((\widetilde{v}^S)^{-X}-v_m\right)\left(\mu\frac{\widetilde{u}^R_{\xi}}{\widetilde{v}^R}\right).
\end{aligned}
\end{equation}
The following proposition gives the a priori estimates of the error term: $(v-\widetilde{v}_{-X}, u-\widetilde{u}_{-X}, \Pi-\widetilde{\Pi}_{-X})$.

\begin{prop}\label{p1}
Let $(v, u, \Pi)$ be local solutions given by Theorem \ref{th3.1} on $[0,T]$ for some $T>0$ and $(\widetilde{v}_{-X},\widetilde{u}_{-X},\widetilde{\Pi}_{-X})$ be defined in \eqref{eq41}.  Assume that there exist positive constants $\delta_0,\varepsilon_1$ such that both the rarefaction and shock waves strength satisfy $\delta_R,\delta_S<\delta_0$ and
\begin{equation}\label{45}
\sup_{0\le t\le T} \|(v-\widetilde{v}_{-X}, u-\widetilde{u}_{-X}, \sqrt{\tau}(\Pi-\widetilde{\Pi}_{-X}))\|_{H^2}\leq\varepsilon_1,
\end{equation}
then the following estimates hold
\begin{equation}\label{46}
\begin{aligned}
&\sup\limits_{t\in[0,T]}\left[\|v-\widetilde{v}_{-X}\|^2_{H^2}
+\|u-\widetilde{u}_{-X}\|^2_{H^2}+\tau\|\Pi-\widetilde{\Pi}_{-X}\|^2_{H^2}\right]
+\delta_S\int_0^T|\dot{X}(t)|^2\dif t\\
&+\int_0^T\left(\mathcal{G}^S(U)+\mathcal{G}^R(U)
+\|\left((v-\widetilde{v}_{-X})_{\xi}, (u-\widetilde{u}_{-X})_{\xi}\right)\|^2_{H^1}+\|\Pi-\widetilde{\Pi}_{-X}\|^2_{H^2}\right)\dif t\\
&\leq C_0\left(\|v_0-\widetilde{v}_0(\cdot)\|^2_{H^2}+\|u_0-\widetilde{u}_0(\cdot)\|^2_{H^2}
+\tau\|\Pi_0-\widetilde{\Pi}_0(\cdot)\|^2_{H^2}\right)+C_0\delta_R^{\theta},
\end{aligned}
\end{equation}
where $C_0$ independent of $T$ and $\tau$, $\theta=\min\{\frac{1}{2}, \frac{3}{2}-\frac{1}{q}\}$ and
\begin{equation}\label{47}
\mathcal{G}^S(U):=\int_\mathbb{R}(\widetilde{v}^S)_{\xi}^{-X} |v-\widetilde{v}_{-X}|^2\dif \xi,
\mathcal{G}^R(U):=\int_\mathbb{R} \widetilde{u}^R_{\xi} |v-\widetilde{v}_{-X}|^2\dif \xi.
\end{equation}
In addition, by \eqref{eq35},
\begin{equation}\label{48}
|\dot{X}(t)|\leq C_0\|(v-\widetilde{v}_{-X})(t,\cdot)\|_{L^{\infty}},\qquad\forall t\leq T.
\end{equation}
\end{prop}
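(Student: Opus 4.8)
\textbf{Proof strategy for Proposition \ref{p1}.}

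The plan is to run a continuation/bootstrap argument: assuming the smallness hypothesis \eqref{45} on $[0,T]$, we establish \eqref{46} with a constant $C_0$ independent of $T$ and $\tau$, which (once $\delta_0,\varepsilon_1$ are fixed small) allows the local solution of Theorem \ref{th3.1} to be continued globally. The estimate \eqref{46} itself will be obtained by combining two layers of energy analysis. The first layer is the $L^\infty_t L^2_\xi$ estimate for the perturbation $(v-\widetilde v_{-X},u-\widetilde u_{-X},S-\widetilde S_{-X})$ together with the weighted dissipation $\mathcal G^S(U)$, $\mathcal G^R(U)$; following \cite{WY}, this is driven by the relative entropy $\eta(U\,|\,\widetilde U)$ weighted by $a(\xi-X(t))$, whose time derivative, after using the equations \eqref{eq40} for $U$ and \eqref{43} for $\widetilde U$, decomposes into (i) good negative terms, namely $-\mathcal G^S$, $-\mathcal G^R$ and the relaxation dissipation $-\int v|S-\widetilde S|^2$ coming from the term $v\Pi$ in $\eqref{eq40}_3$ (this replaces the missing B--D/parabolic dissipation of $(v-\widetilde v)_\xi$); (ii) the shift terms, where the ODE \eqref{eq35} defining $\dot X$ is precisely designed so that the bad linear-in-$(v-\widetilde v)$ contributions are absorbed, yielding in addition the good term $\delta_S\int_0^T|\dot X|^2\,dt$ and the bound \eqref{48}; (iii) error terms from $\widetilde F_1,\widetilde F_2$ in \eqref{44}, which involve products of rarefaction and shock quantities and are controlled, after integration in time, by $C\delta_R^{\theta}$ using the decay estimates in Lemma \ref{Le3} and Lemma \ref{Le4} (the interaction terms between well-separated waves decay exponentially, the genuinely rarefaction-type errors give the $\delta_R^{3/2-1/q}$ and $\delta_R^{1/2}$ powers); (iv) cubic and higher remainder terms, bounded by $C(\delta_0+\varepsilon_1)$ times the good terms via Lemma \ref{Le1} and Sobolev embedding, hence absorbed. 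The sharp structural point here is the sign condition on the weight, $a'>0$ from \eqref{56} and $1<a<1+\lambda$ from \eqref{55}, together with the relation between $\lambda$ and $\delta_S$, which is what makes the quadratic form in the shock variables negative definite modulo the absorbed remainder --- this is the $a$-contraction mechanism, and I expect it to require careful bookkeeping but to go through essentially as in \cite{WY} once the relaxation term is identified as the substitute dissipation.

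The second layer, which is the genuinely new part, is to recover control of the derivatives $\|(v-\widetilde v_{-X})_\xi,(u-\widetilde u_{-X})_\xi\|_{H^1}$ and $\|\Pi-\widetilde\Pi_{-X}\|_{H^2}$ appearing in \eqref{46}, since the first layer yields no dissipation for $(v-\widetilde v_{-X})_\xi$. Here I would differentiate the perturbation system in $\xi$ (up to second order) and perform direct energy estimates. Writing $(\phi,\psi,\zeta)=(v-\widetilde v_{-X},u-\widetilde u_{-X},\Pi-\widetilde\Pi_{-X})$, the perturbed system has the schematic form $\phi_t-\sigma\phi_\xi-\psi_\xi=\dot X(\widetilde v^S)^{-X}_\xi$, $\psi_t-\sigma\psi_\xi+(p(v)-p(\widetilde v_{-X}))_\xi-\zeta_\xi=\dot X(\widetilde u^S)^{-X}_\xi-\widetilde F_1$, $\tau\zeta_t-\sigma\tau\zeta_\xi+v\zeta=\mu\psi_\xi-(v-\widetilde v_{-X})\widetilde\Pi_{-X}-\tau\dot X(\widetilde\Pi^S)^{-X}_\xi-\widetilde F_2$. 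The key observation is that the hyperbolic structure of the $(v,u)$ block together with the relaxation equation gives, after combining $\partial_\xi^k$ of the three equations with suitable multipliers (roughly $p'(v)\partial_\xi^k\phi$, $\partial_\xi^k\psi$, $\tau^{-1}\partial_\xi^k\zeta$ type, plus a cross term $\int \partial_\xi^k\phi\,\partial_\xi^k\psi$ to extract $\phi_\xi$-dissipation out of the pressure term as in the standard treatment of the $p$-system with relaxation), a closed inequality controlling $\|(\phi_\xi,\psi_\xi)\|_{H^1}^2+\tau\|\zeta\|_{H^2}^2$ in $L^\infty_t$ and their $L^2_t$ dissipation $\|(\phi_\xi,\psi_\xi)\|_{H^1}^2+\|\zeta\|_{H^2}^2$, modulo: the zeroth-order quantities already bounded in layer one; remainder terms linear in $\dot X$ (absorbed using $\delta_S\int|\dot X|^2$ and \eqref{48}); the wave-derivative source terms $\partial_\xi^k\widetilde F_i$ and terms with coefficients $\partial_\xi^j\widetilde v^R,\partial_\xi^j\widetilde v^S$ which are integrated in time against Lemma \ref{Le4}(4) and Lemma \ref{Le2} to produce $C\delta_R^\theta$ plus $\delta_0$ times good terms; and cubic terms bounded by $(\delta_0+\varepsilon_1)$ times the dissipation. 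It is precisely here that the condition $\tau\le\min\{\inf\mu/|\sigma^2+p'(\widetilde v^S)|,1\}$ on the relaxation parameter enters: it guarantees (via \eqref{eq28}, since $h'(\widetilde v^S)=-\sigma^2-p'(\widetilde v^S)$) that the traveling wave is well-defined with $\tau$-uniform bounds, and, more importantly, that the relaxation term $v\zeta$ dominates the antidissipative contributions of $-\sigma\tau\zeta_\xi$ and of the shock coefficients, keeping the $\zeta$-energy coercive uniformly in $\tau$.

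Finally I would add the two layers with a small multiplicative constant in front of the second, so that the $\|(\phi_\xi,\psi_\xi)\|_{H^1}^2$ dissipation of layer two is not eaten by the (non-dissipative-in-$\phi_\xi$) remainder of layer one, and close the argument by choosing $\delta_0$ and $\varepsilon_1$ small enough that all the $(\delta_0+\varepsilon_1)$-factored terms are absorbed into the left-hand side; integrating in $t$ over $[0,T]$ then gives \eqref{46} with $C_0$ independent of $T$ and $\tau$, and \eqref{48} is read off directly from \eqref{eq35} using \eqref{55} and the $L^1$-in-$\xi$ bound on $\widetilde v^S_\xi$, $\widetilde u^S_\xi$ from Lemma \ref{Le2}. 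The main obstacle, as flagged in the introduction, is layer two: unlike \cite{WY}, there is no change of variables making $(v-\widetilde v)_\xi$ directly dissipative, so one must show that the combined hyperbolic-plus-relaxation structure still closes at the $H^2$ level with $\tau$-uniform constants, and in particular that the cross-multiplier trick producing the $\phi_\xi$-dissipation does not generate uncontrolled $\tau$-singular or shock-coefficient terms --- this is where the hypothesis on $\tau$ and the exponential localization of $\widetilde v^S_\xi$ in Lemma \ref{Le2} are doing the real work.
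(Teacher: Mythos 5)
Your two-layer architecture coincides with the paper's: layer one is exactly the weighted relative entropy / $a$-contraction estimate of Lemma \ref{Le5} (built from Lemmas \ref{Le6} and \ref{Le8}), including your correct observation that it produces no dissipation of $(v-\widetilde v_{-X})_\xi$ --- in the paper this appears as the term $C\int_0^t\int_{\mathbb R}|\partial_\xi(v-\widetilde v)|^2\dif\xi\dif t$ left on the right-hand side of Lemma \ref{Le5}; layer two is the direct energy analysis of the differentiated system \eqref{80} with the same diagonal multipliers (the paper uses $-p'(v)\partial_\xi^k\Phi$, $\partial_\xi^k\Psi$, $\mu^{-1}\partial_\xi^kQ$ in Lemma \ref{Le10}) plus a cross estimate extracting the $\Phi_\xi$-dissipation from the pressure term (Lemma \ref{Le11}, which tests $\partial_\xi^k$ of the momentum equation against $\partial_\xi^{k+1}\Phi$); and \eqref{48} is indeed read off \eqref{eq35} with Lemma \ref{Le2}, as you say.

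There is, however, one concrete gap in your layer two. With the multipliers you list, the only $L^2_t$ dissipation the diagonal estimates produce is $\int\frac v\mu(\partial_\xi^kQ)^2$; since the momentum equation carries no viscosity, the cross estimate that yields $\int_0^t\int(-p'(v))(\partial_\xi^{k+1}\Phi)^2$ inevitably generates an $O(1)$ term $C_1\int_0^t\|\partial_\xi\Psi\|_{H^1}^2\dif t$ (as in Lemma \ref{Le11}), and nothing in your described scheme absorbs it --- yet $\int_0^T\|(u-\widetilde u_{-X})_\xi\|_{H^1}^2\dif t$ is itself part of the claim \eqref{46}, so it cannot simply be asserted. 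The missing ingredient is the paper's second cross estimate (Lemma \ref{Le12}): test $\partial_\xi^k$ of the relaxation equation against $\partial_\xi^{k+1}\Psi$ so that $\mu(\partial_\xi^{k+1}\Psi)^2$ appears on the left, handle the term $\tau\,\partial_t\partial_\xi^kQ\,\partial_\xi^{k+1}\Psi$ by integrating by parts in time and substituting $\eqref{50}_2$, and couple the two cross estimates through the constants $C_1$ and $1/(2C_1)$ so that the loop between the $\Phi_\xi$- and $\Psi_\xi$-dissipations closes; this converts the $Q$-dissipation of Lemma \ref{Le10} into the required $\Psi_\xi$-dissipation. Two smaller corrections: the third multiplier should be $\mu^{-1}\partial_\xi^kQ$, not $\tau^{-1}\partial_\xi^kQ$ --- the latter is $\tau$-singular and destroys the exact cancellation $\int\left(\partial_\xi^{k+1}Q\,\partial_\xi^k\Psi+\partial_\xi^{k+1}\Psi\,\partial_\xi^kQ\right)\dif\xi=0$ that keeps the estimate uniform in $\tau$; and the cross functional must carry one extra derivative on $\Phi$, i.e.\ $\int\partial_\xi^k\Psi\,\partial_\xi^{k+1}\Phi\,\dif\xi$, since $\int\partial_\xi^k\Phi\,\partial_\xi^k\Psi\,\dif\xi$ does not produce a sign-definite $(-p'(v))|\partial_\xi^{k+1}\Phi|^2$ term.
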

We will give the proof of Proposition \ref{p1} in Section 4.

Now, by using Theorem \ref{th3.1} and Proposition \ref{p1}, we are able to prove Theorem \ref{th1}.

{\bf{Proof of Theorem \ref{th1}}}:   From Theorem \ref{th3.1}, the system \eqref{eq40} with initial condition \eqref{eq2} has a unique solution $(v, u, \Pi)\in C^0([0, T], H^2)$ for some $T>0$.  By definition of $\underline v, \underline u$, we have
\[
\sum\limits_{\pm}\left(\|(\underline{v}-v_{\pm},\underline{u}-u_{\pm})\|_{L^2(\mathbb{R}_{\pm})}\right)
+\|((\underline{v})_x,(\underline{u})_x)\|_{H^1}<C_\ast(\delta_S+\delta_R).
\]
Then, using \eqref{hu4.11} and  the assumption \eqref{eq36} in Theorem \ref{th1}, we have
\begin{align}\label{hu4.11-2}
\|(\underline{v}-\widetilde{v}_0(\xi),\underline{u}-\widetilde{u}_0(\xi))\|_{H^2}
+\sqrt{\tau}\|\widetilde{\Pi}_0(\xi)\|_{H^2}
<C_{\ast}(\delta_S+\delta_R)
\end{align}
and
\begin{align}\label{hu4.11-3}
\|(v_0-\underline{v},u_0-\underline{u})\|_{H^2}
+\sqrt{\tau}\|\Pi_0\|_{H^2}
<\varepsilon_0+C_{\ast}(\delta_S+\delta_R).
\end{align}

Next, let energy term $E(t)$ defined as follows
\[
E(t)=\sup_{0\leq s \leq t}\left\|\left(v-\widetilde{v},u-\widetilde{u},\sqrt{\tau}(\Pi-\widetilde{\Pi})\right)\right\|_{H^2}.
\]
Then, from \eqref{hu4.11-2} and \eqref{hu4.11-3},  for sufficiently small $\varepsilon_0, \delta_S, \delta_R$, we get
\begin{align*}
E(0)\le\varepsilon_0+2C_{\ast}(\delta_S+\delta_R)<\frac{\varepsilon_1}{2}.
\end{align*}
Next we shall show $E(T)\le \frac{\varepsilon_1}{2}$  for sufficiently small $\varepsilon_0, \delta_S, \delta_R$, which  justify the a priori assumption \eqref{45}.
From Proposition \ref{p1}, if
\[
E(T)<\varepsilon_1,
\]
then, we have
\[
E(t)\leq C_0E(0)+C_0\delta_R^{\theta}\le C_0 (\varepsilon_0+2C_{\ast}(\delta_S+\delta_R))+C_0\delta_R^{\theta}.
\]
Now, choosing $\delta_S, \delta_R$ sufficiently small, we can get
\[
E(t)<\frac{\varepsilon_1}{2},
\]
Thus, the a priori assumption \eqref{45} is closed.
Therefore, based on the local exist theorem (Theorem \ref{th3.1}) and a priori estimates (Proposition \ref{p1}) we get a global solution in time by the classical continuation methods.

In addition, using system \eqref{50} below and  \eqref{46}, we can get
$$\int_0^{\infty}\left\|\partial_{\xi}(v-\widetilde{v}, u-\widetilde{u}, \Pi-\widetilde{\Pi})\right\|_{L^2}^2\dif t \le C$$
and
$$\int_0^{\infty} \Big|\frac{\dif}{\dif t}\left\|\partial_{\xi}(v-\widetilde{v}, u-\widetilde{u}, \Pi-\widetilde{\Pi})\right\|_{L^2}^2 \Big |\dif t \le C.$$
Thus, combining the interpolation inequality, we get
\[
\lim\limits_{t\rightarrow\infty}\left\|(v-\widetilde{v}, u-\widetilde{u}, \Pi-\widetilde{\Pi})\right\|_{L^{\infty}}=0.
\]
Furthermore, using the definition of shift $X(t)$ (see \eqref{eq35}), we have
\[
|\dot{X}(t)|\leq C\left\|v-\widetilde{v}\right\|_{L^{\infty}}\rightarrow0\quad as \quad t\rightarrow\infty.
\]
Therefore,  the proof of Theorem \ref{th1} is finished.
\section{Proof of Proposition \ref{p1}}
In this section, we establish the a priori estimates  of local solutions and thus give a proof of Proposition \ref{p1}.
For simplicity, we use the notation $(\widetilde{v},\widetilde{u},\widetilde{\Pi})(t,\xi)$ instead of $(\widetilde{v}_{-X},\widetilde{u}_{-X},\widetilde{\Pi}_{-X})$.
Using \eqref{eq40} and \eqref{43}, we derive the equations for the error term $(v-\widetilde v, u- \widetilde u, \Pi-\widetilde \Pi)$ as follows:
\begin{equation}\label{50}
\begin{cases}
(v-\widetilde{v})_t-\sigma(v-\widetilde{v})_{\xi}-\dot{X}(t)(\widetilde{v}^S)_{\xi}^{-X}-(u-\widetilde{u})_{\xi}=0,\\
(u-\widetilde{u})_t-\sigma(u-\widetilde{u})_{\xi}-\dot{X}(t)(\widetilde{u}^S)_{\xi}^{-X}+(p(v)-p(\widetilde{v}))_{\xi}=(\Pi-\widetilde{\Pi})_{\xi}-F_1,\\
\tau(\Pi-\widetilde{\Pi})_t-\sigma\tau(\Pi-\widetilde{\Pi})_{\xi}-\tau\dot{X}(t)(\widetilde{\Pi}^S)_{\xi}^{-X}+(v\Pi-\widetilde{v}\widetilde{\Pi})=\mu(u-\widetilde{u})_{\xi}-F_2,
\end{cases}
\end{equation}
where
\[
F_1=p(\widetilde{v})_{\xi}-p(\widetilde{v}^R)_{\xi}-p((\widetilde{v}^S)^{-X})_{\xi}
-\left(\mu\frac{\widetilde{u}^R_{\xi}}{\widetilde{v}^R}\right)_{\xi},
\]
and $F_2=\widetilde{F_2}$.\\
To prove Proposition \ref{p1}, we need to do  the lower-order, higher-order and dissipation estimates, respectively.
\subsection{$L^2$ estimates}
The $L^2$ estimates are given in the following lemma.
\begin{lemma}\label{Le5}
Under the hypotheses of Proposition \ref{p1}, there exists $C>0$ (independent of $\tau$ and $T$) such that for all $t\in(0,T]$,
\[
\begin{aligned}
&\|v-\widetilde{v}\|^2_{L^2}+\|u-\widetilde{u}\|^2_{L^2}
 +\tau\|\Pi-\widetilde{\Pi}\|^2_{L^2}+\delta_S\int_0^t|\dot{X}(t)|^2\dif t+\int_0^t\left(\mathcal{G}^S(U)+\mathcal{G}^R(U)+G(U)\right)\dif t\\
 &\leq C\left(\|v_0-\widetilde{v}_0(\cdot)\|^2_{L^2}+\|u_0-\widetilde{u}_0(\cdot)\|^2_{L^2}
 +\tau\|\Pi_0-\widetilde{\Pi}_0(\cdot)\|^2_{L^2}\right)+C\int_0^t\int_\mathbb{R}\left|\partial_{\xi}(v-\widetilde{v})\right|^2\dif\xi \dif t+C\delta_R^{\theta},
\end{aligned}
\]
where $\theta=\min\{\frac{1}{2}, \frac{3}{2}-\frac{1}{q}\},$
\[
 G(U):=\int_\mathbb{R}\frac{v}{\mu}|\Pi-\widetilde{\Pi}|^2\dif \xi,
\]
 $\mathcal G^{S}(U)$ and $\mathcal G^{R}(U)$ are given in \eqref{47}.
\end{lemma}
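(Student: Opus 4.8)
\textbf{Proof proposal for Lemma \ref{Le5}.}

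The plan is to run a weighted relative-entropy estimate in the spirit of the $a$-contraction method, adapted to the relaxed hyperbolic structure. First I would introduce the relative entropy functional
\[
\eta(t):=\int_\mathbb{R} a(\xi-X)\left(\tfrac12|u-\widetilde u|^2+\tfrac{\tau}{2}\big|\tfrac{\Pi-\widetilde\Pi}{\sqrt{?}}\big|^2\;+\;H(v|\widetilde v)\right)\dif\xi ,
\]
that is, the physical energy $\tfrac12(u-\widetilde u)^2+H(v|\widetilde v)$ supplemented by the relaxation contribution $\tfrac{\tau}{2v}|\Pi-\widetilde\Pi|^2$ (the natural entropy for the third equation of \eqref{eq40}, since multiplying $\eqref{50}_3$ by $\tfrac{\Pi-\widetilde\Pi}{\mu}$ produces the dissipation $G(U)=\int \tfrac v\mu|\Pi-\widetilde\Pi|^2$), all multiplied by the weight $a(\xi-X(t))$ defined in \eqref{eq43}. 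Differentiating $\eta$ in time and using the error system \eqref{50}, the terms generated fall into several groups: (i) the good dissipation terms $\mathcal G^S(U)$, $\mathcal G^R(U)$ coming from $a'\,\widetilde v^S_\xi$ and $\widetilde u^R_\xi$ against $|v-\widetilde v|^2$ (here Lemma \ref{Le1} converts $p(v|\widetilde v)$ and $H(v|\widetilde v)$ into $|v-\widetilde v|^2$, using smallness of $\delta_S,\delta_R$ and the a priori bound \eqref{45} to control the cubic remainder), plus the relaxation dissipation $G(U)$; (ii) the shift terms, proportional to $\dot X(t)$, which by the very choice of the ODE \eqref{eq35} and the constant $M$ are arranged so that the leading bad quadratic contribution is absorbed, leaving $-\,c\,\delta_S|\dot X|^2$ plus harmless lower-order pieces (this is the standard $a$-contraction cancellation, and \eqref{48} follows from bounding the right side of \eqref{eq35} by $\|v-\widetilde v\|_{L^\infty}$); (iii) the interaction error terms $F_1,F_2$ between the rarefaction and shock profiles, estimated using Lemmas \ref{Le2}, \ref{Le3}, \ref{Le4} — these are integrable in time and contribute the $C\delta_R^\theta$ on the right, with $\theta=\min\{\tfrac12,\tfrac32-\tfrac1q\}$ coming from the decay rates in Lemma \ref{Le4}; and (iv) a leftover term involving $\partial_\xi(v-\widetilde v)$ that cannot be signed — this is exactly where the relaxed system differs from \cite{WY}, and it is dumped onto the right-hand side as $C\int_0^t\|\partial_\xi(v-\widetilde v)\|_{L^2}^2$, to be recovered later by the higher-order estimates.

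More concretely, the key steps in order are: (1) compute $\tfrac{d}{dt}\eta$, carefully tracking the $\dot X a'$ and $a_t$ contributions; (2) isolate the quadratic form in $(p(v)-p(\widetilde v^S),\,\dot X)$ living on the shock layer and show, using the precise value $M=\tfrac{5(\gamma+1)\sigma_m^3}{8\gamma p(v_m)}$ and $\delta_S\ll\lambda\le C\sqrt{\delta_S}$, that it is bounded above by $-c_1\mathcal G^S(U)-c_2\delta_S|\dot X|^2$ modulo terms of order $\delta_S$ times the good terms; (3) treat the rarefaction-layer terms to extract $-c\,\mathcal G^R(U)$, absorbing cross terms between the two wave families via their exponential separation (Lemmas \ref{Le2}(6) and \ref{Le4}(5)); (4) multiply $\eqref{50}_3$ by $(\Pi-\widetilde\Pi)/\mu$ and add, producing $-G(U)$ together with a term $+\int(u-\widetilde u)_\xi(\Pi-\widetilde\Pi)$ that pairs against the corresponding term from the $u$-equation — these velocity-stress cross terms must cancel up to controllable remainders, which is where the relaxed structure is used essentially; (5) bound all $F_1,F_2$ and commutator-type remainders by $C\delta_R^\theta$ plus $\epsilon$ times the good terms plus $C\|\partial_\xi(v-\widetilde v)\|_{L^2}^2$; (6) integrate in $t\in(0,t]$ and choose $\delta_0,\varepsilon_1$ small to absorb all $\epsilon$-terms into the left side, and finally invoke Lemma \ref{Le1}(1) once more to replace $\int a\,H(v|\widetilde v)$ by $\|v-\widetilde v\|_{L^2}^2$ and $\eqref{55}$ to handle the weight $1<a<1+\lambda$.

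The main obstacle I anticipate is step (4)–(5): because the Bresch–Desjardins entropy is unavailable, there is no change of variables making the diffusion term manifestly dissipative in $\partial_\xi(v-\widetilde v)$, so one cannot close on the $L^2$ level alone — the estimate deliberately leaves $C\int_0^t\|\partial_\xi(v-\widetilde v)\|_{L^2}^2$ uncontrolled. The delicate point is to verify that \emph{every} other term (in particular all the velocity–stress interactions generated by the relaxation term $v\Pi$, and the time derivative $\tau(\mu\widetilde u^R_\xi/\widetilde v^R)_t$ hidden in $F_2$) can genuinely be bounded by the good quantities $\mathcal G^S,\mathcal G^R,G$ plus the single bad term $\|\partial_\xi(v-\widetilde v)\|_{L^2}^2$ plus an integrable-in-time $\delta_R^\theta$ source — with constants uniform in $\tau$, which forces one to keep the $\tau$-weighted norm $\sqrt\tau(\Pi-\widetilde\Pi)$ throughout and to use the structural constraint $\tau\le\mu/|\sigma^2+p'(\widetilde v^S)|$ so that the traveling-wave ODE \eqref{eq27} stays non-degenerate and the bound \eqref{eq29}, $|\widetilde\Pi^S|\sim|\widetilde v^S_\xi|$, holds with $\tau$-independent constants.
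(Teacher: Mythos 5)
Your overall strategy coincides with the paper's: the modulated entropy $\int a(\xi-X)\bigl(\tfrac12|u-\widetilde u|^2+H(v|\widetilde v)+\tfrac{\tau}{2\mu}|\Pi-\widetilde\Pi|^2\bigr)\dif\xi$, the cancellation $\dot X=-\tfrac{M}{\delta_S}(Y_1+Y_2)$ giving $-\tfrac{\delta_S}{M}|\dot X|^2$, the good terms $\mathcal G^S,\mathcal G^R,G$, wave--interaction terms estimated by Lemmas \ref{Le2}--\ref{Le4} and \ref{Le9} yielding $C\delta_R^\theta$, and the $\partial_\xi(v-\widetilde v)$ term deferred to the higher-order estimates. (Minor slip: the stress weight must be the constant $\tfrac{\tau}{2\mu}$, consistent with the multiplier $(\Pi-\widetilde\Pi)/\mu$ you quote; your $\tfrac{\tau}{2v}$ would generate extra $v_t$-terms and does not match that multiplier.)

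The genuine gap is in your step (2), and it matters because it is the crux of the lemma. The shock-layer bad terms $B_1+B_2-G_2$ are \emph{not} absorbed by the choice of $M=\tfrac{5(\gamma+1)\sigma_m^3}{8\gamma p(v_m)}$ and $\delta_S\ll\lambda\le C\sqrt{\delta_S}$ alone: the quadratic form in $\bigl(p(v)-p(\widetilde v^X),\dot X\bigr)$ is not negative definite by itself, and the paper instead invokes Lemma \ref{Le8} (the Kang--Vasseur--Wang Poincar\'e-type estimate on the shock layer), whose left-hand side necessarily contains $-\tfrac34 D$ with $D=\int_{\mathbb R}\tfrac{a}{\gamma p(v)}|\partial_\xi(p(v)-p(\widetilde v^X))|^2\dif\xi$. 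Since the relaxed system supplies no such dissipation (no B--D entropy), the paper adds and subtracts $\tfrac34 D$, applies Lemma \ref{Le8}, and then bounds $D\le C\int|\partial_\xi(v-\widetilde v^X)|^2\dif\xi+C\delta_S^2\mathcal G^S+C\delta_R\mathcal G^R$; this is the true origin of the $C\int_0^t\|\partial_\xi(v-\widetilde v)\|_{L^2}^2\dif t$ on the right of Lemma \ref{Le5}, and $D$ is also needed to absorb the cubic term $K_1$ and the interaction term $K_2$ via interpolation. Your account of the derivative term as an ``unsigned leftover'' of the entropy identity misplaces this mechanism, and as literally written your step (2) would not close; to repair the argument you must state and use the Poincar\'e-type inequality (with $D$), and only then transfer $D$ to the right-hand side as the derivative term your final inequality already anticipates.
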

Before we prove Lemma \ref{Le5}, we need some auxiliary lemmas. Firstly, using lemmas \ref{Le2} and \ref{Le4}, we have the following estimates. \begin{lemma}\label{Le9}
Let $X$ be the shift defined by \eqref{eq35}. Under the same hypotheses as in Proposition \ref{p1}, the following estimates hold: $\forall t\leq T$,
\begin{align*}
&\|(\widetilde{v}^S)^{-X}_{\xi}(\widetilde{v}^R-v_m)\|_{L^2}
+\|(\widetilde{v}^S)^{-X}_{\xi}\widetilde{v}^R_{\xi}\|_{L^2}\leq C\delta_R\delta_S^{3/2}e^{-C\delta_St},\\
&\|\widetilde{v}^R_{\xi}((\widetilde{v}^S)^{-X}-v_m)\|_{L^2}\leq C\delta_R\delta_Se^{-C\delta_St}.
\end{align*}
\end{lemma}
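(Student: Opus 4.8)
\textbf{Proof proposal for Lemma \ref{Le9}.}
The plan is to exploit the separated spatial structure of the two approximate waves: the viscous-shock factors are exponentially concentrated near $\xi=X(t)$ with rate $\sim\delta_S$ (Lemma \ref{Le2}), while the rarefaction factors $\widetilde v^R-v_m$ and $\widetilde v^R_\xi$ decay exponentially away from the rarefaction fan, and in particular are exponentially small on the region where $\xi-X(t)$ is bounded, because the shock travels with speed $\sigma>0$ strictly separated from the $1$-rarefaction characteristic speeds. Concretely, I would first record that, since $(v_-,u_-)\in R_1(v_m,u_m)$ and $(v_m,u_m)\in S_2(v_+,u_+)$, the shock speed $\sigma$ is positive while $\lambda_1(v_m)<0$, so that for $\xi$ near $X(t)\ge 0$ one has $\xi+\sigma t$ staying to the right of the rarefaction fan by a definite amount proportional to $t$; then item 5) of Lemma \ref{Le4} gives $|(\widetilde v^R-v_m,\widetilde v^R_\xi)(t,\xi+\sigma t)|\le C\delta_R e^{-c(|\xi-X(t)|+\,t)}$ on that region, with an analogous bound when $\xi-X(t)$ is large and negative coming from combining items 5)–6) of Lemma \ref{Le4} with the shock decay. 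A clean way to organize this is to use the shift bound $|X(t)|\le\int_0^t|\dot X|\,ds$ together with \eqref{48} and the a priori assumption \eqref{45} to guarantee $|X(t)|$ grows at most sublinearly, so it never interferes with the separation of the wave supports.

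Next I would carry out the three $L^2$ estimates one at a time. For $\|(\widetilde v^S)^{-X}_\xi(\widetilde v^R-v_m)\|_{L^2}$: by Lemma \ref{Le2}, $|(\widetilde v^S)^{-X}_\xi|\le C\delta_S^2 e^{-C\delta_S|\xi-X(t)|}$, and on the support of this factor we use the region estimate above to bound $|\widetilde v^R-v_m|\le C\delta_R e^{-c(|\xi-X(t)|+t)}\le C\delta_R e^{-ct}$. Hence
\[
\|(\widetilde v^S)^{-X}_\xi(\widetilde v^R-v_m)\|_{L^2}
\le C\delta_S^2\delta_R e^{-ct}\Big(\int_\mathbb{R} e^{-2C\delta_S|\xi-X|}\,\dif\xi\Big)^{1/2}
\le C\delta_S^2\delta_R e^{-ct}\,\delta_S^{-1/2}
= C\delta_R\delta_S^{3/2}e^{-ct}.
\]
Since the decay rate $c$ can be taken $\ge C\delta_S$ (it is in fact better, but this suffices and matches the statement), this gives the first bound. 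The term $\|(\widetilde v^S)^{-X}_\xi\widetilde v^R_\xi\|_{L^2}$ is handled identically, using $|\widetilde v^R_\xi|\le C\delta_R e^{-ct}$ on the support in place of $|\widetilde v^R-v_m|$; note the extra smallness $\delta_R$ rather than $\delta_R^{1/p}$ is available precisely because we are localized far from the rarefaction fan.

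For the last estimate, $\|\widetilde v^R_\xi((\widetilde v^S)^{-X}-v_m)\|_{L^2}$, the roles reverse: now I bound $\|\widetilde v^R_\xi\|_{L^\infty}$ crudely and put the $L^2$ norm on the shock factor. On the support of $\widetilde v^R_\xi$ (the rarefaction fan, plus exponential tails), $\xi-X(t)$ is large and negative, so Lemma \ref{Le2} gives $|(\widetilde v^S)^{-X}-v_m|\le C\delta_S e^{-C\delta_S|\xi-X(t)|}\le C\delta_S e^{-ct}$; combined with $\|\widetilde v^R_\xi\|_{L^2}\le C\delta_R$ from item 3) of Lemma \ref{Le4} (with $p=2$ one even gets a $t$-decay, but $\delta_R$ suffices) this yields $C\delta_R\delta_S e^{-ct}$, again with $c\ge C\delta_S$.

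The main obstacle, and the only genuinely delicate point, is establishing the geometric separation uniformly in $t$: one must be sure that the rarefaction factor is exponentially small (in both $|\xi-X(t)|$ \emph{and} $t$) on the entire region where the shock factor is not exponentially small, and vice versa, despite the presence of the shift $X(t)$. This is where one invokes that $X(t)$ is sublinear (from \eqref{48} and \eqref{45}, with $\delta_0,\varepsilon_1$ small) and that the shock speed $\sigma$ is strictly separated from the rarefaction speeds $\lambda_1\in[\lambda_1(v_-),\lambda_1(v_m)]\subset(-\infty,0)$, so that $\xi+\sigma t$ and $\xi-X(t)$ cannot both be moderate; the exponential-in-$t$ factor in the final bounds is exactly the quantitative form of this separation. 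Once that geometric fact is pinned down, the rest is the routine Gaussian-type integration displayed above.
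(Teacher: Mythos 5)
Your overall route is the same separation-of-waves argument that the paper itself invokes (it omits the proof and points to Lemma 4.2 of \cite{WY}): shock localization with rate $\sim\delta_S$ from Lemma \ref{Le2}, exponential decay of the rarefaction to the right of its fan from Lemma \ref{Le4} 5)--6), control of the shift through \eqref{48} and \eqref{45}, and the $\delta_S^{-1/2}$ coming from integrating the shock tail in $L^2$. So the ingredients are the right ones. However, as written the key displayed step would fail: you bound $|\widetilde v^R-v_m|\le C\delta_R e^{-c(|\xi-X(t)|+t)}$ ``on the support'' of $(\widetilde v^S)^{-X}_\xi$ and integrate this globally, but $(\widetilde v^S)^{-X}_\xi$ is supported on all of $\mathbb{R}$ (its tails decay only at rate $C\delta_S$), and on and to the left of the rarefaction fan one has $\widetilde v^R-v_m\approx v_--v_m$, of size $\delta_R$ with \emph{no} decay in $t$ or in $|\xi-X|$. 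The same objection applies to your treatment of $\widetilde v^R_\xi$ and to the phrase ``on the support of $\widetilde v^R_\xi$, $\xi-X(t)$ is large and negative''. Your closing claim that the rarefaction factor is exponentially small wherever the shock factor is not (``and vice versa'') is also false for $t\lesssim\delta_S^{-1}$, since the shock profile has width $\sim\delta_S^{-1}$; this is exactly why the lemma's rate is $\delta_S$-dependent, and for such small $t$ no separation is needed because $e^{-C\delta_S t}\sim1$ and the bound already follows from $\|\widetilde v^R-v_m\|_{L^\infty}\le C\delta_R$, $\|(\widetilde v^S)^{-X}_\xi\|_{L^2}\le C\delta_S^{3/2}$.

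The repair is the explicit two-region split that you allude to but never carry out. Split at, say, $\xi=-\sigma t/2$. For $\xi\ge-\sigma t/2$ one has $\xi+\sigma t\ge\sigma t/2>0$, so Lemma \ref{Le4} 5) gives $|(\widetilde v^R-v_m,\widetilde v^R_\xi)|\le C\delta_R e^{-2(|\xi+\sigma t|+|\lambda_1(v_m)|t)}$, and pairing this with $\|(\widetilde v^S)^{-X}_\xi\|_{L^2}\le C\delta_S^{3/2}$ (resp.\ $|(\widetilde v^S)^{-X}-v_m|\le C\delta_S$) yields a contribution $\le C\delta_R\delta_S^{3/2}e^{-ct}$ (resp.\ $C\delta_R\delta_S e^{-ct}$). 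For $\xi\le-\sigma t/2$ the rarefaction factor is only bounded by $C\delta_R$, and the time decay must come from the shock: from \eqref{48}, \eqref{45} one gets $|\dot X|\le C\varepsilon_1$, hence $|X(t)|\le C\varepsilon_1 t\le\sigma t/4$ (linear with small slope, not ``sublinear'', but that is all you need), so $|\xi-X(t)|\ge\sigma t/4$ there and $e^{-C\delta_S|\xi-X|}\le e^{-C\delta_S\sigma t/8}e^{-C\delta_S|\xi-X|/2}$, which after integration gives $C\delta_R\delta_S^{3/2}e^{-C\delta_S t}$ (and $C\delta_R\delta_S e^{-C\delta_S t}$ for the third quantity, using $\|\widetilde v^R_\xi\|_{L^2}\le C\delta_R$). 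In particular the time rate genuinely degenerates to $C\delta_S$ on the left region; the uniform pointwise bound with an absolute rate $c$ that your computation relies on is not available, so the lemma cannot be obtained without performing this splitting.
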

The proof of the above lemma is very similar to that in \cite{WY}, so we omit the proof here. (see Section 4, Lemma 4.2 in \cite{WY} for details) .

Let $U:=(u, v, \Pi)^T$  and $\widetilde{U}:=(\widetilde{u}, \widetilde{v}, \widetilde{\Pi})^T$. To show Lemma \ref{Le5} hold, we first define a relative entropy quantity:
\begin{equation}\label{58}
\eta(U|\widetilde{U})=\frac{|u-\widetilde{u}|^2}{2}+H(v|\widetilde{v})+\frac{\tau|\Pi-\widetilde{\Pi}|^2}{2\mu},
\end{equation}
where $H(v)=\frac{v^{-\gamma+1}}{\gamma-1}$, $i.e.$, $H^{\prime}(v)=-p(v)$. Then, the following lemma gives the estimate of the above relative entropy weighted by $a(\xi)$ defined in \eqref{eq43} with a shift $X$.
\begin{lemma}\label{Le6}
We have
\begin{equation}\label{60}
\frac{\dif}{\dif t}\int_\mathbb{R}a^{-X}(\xi)\eta(U(t,\xi)|\widetilde{U}(t,\xi))\dif \xi=\dot{X}(t)Y(U)+J^{bad}(U)-J^{good}(U),
\end{equation}
where
\begin{align*}
Y(U):=&-\int_\mathbb{R}a_{\xi}^{-X}\eta(U|\widetilde{U})\dif \xi+
\int_\mathbb{R}a^{-X}(\widetilde{u}^S)^{-X}_{\xi}(u-\widetilde{u})\dif \xi\\
    &-\int_\mathbb{R}a^{-X}p^{\prime}(\widetilde{v})(\widetilde{v}^S)^{-X}_{\xi}(v-\widetilde{v})\dif \xi
    +\int_\mathbb{R}a^{-X}\frac{\tau}{\mu}(\widetilde{\Pi}^S)^{-X}_{\xi}(\Pi-\widetilde{\Pi})\dif \xi,
\end{align*}
\begin{align*}
   J^{bad}(U):=&\frac{1}{2\sigma}\int_\mathbb{R}a_{\xi}^{-X}((p(v)-p(\widetilde{v}))^2\dif\xi
   +\sigma\int_\mathbb{R}a^{-X}(\widetilde{v}^S)^{-X}_{\xi}p(v|\widetilde{v})\dif\xi\\
      &-\frac{1}{\sigma}\int_\mathbb{R}a^{-X}_{\xi}(\Pi-\widetilde{\Pi})(p(v)-p(\widetilde{v}))\dif\xi
      -\int_\mathbb{R}a^{-X}\frac{\widetilde{\Pi}}{\mu}(\Pi-\widetilde{\Pi})(v-\widetilde{v})\dif\xi\\
    &+\frac{1}{2\sigma}\int_\mathbb{R}a^{-X}_{\xi}(\Pi-\widetilde{\Pi})^2\dif\xi
    -\int_\mathbb{R}a^{-X}(u-\widetilde{u})F_1\dif\xi-\int_\mathbb{R}a^{-X}\frac{(\Pi-\widetilde{\Pi})}{\mu}F_2\dif\xi,
\end{align*}
\begin{align*}
J^{good}(U):=&\frac{\sigma}{2}\int_\mathbb{R}a_{\xi}^{-X}
\left(u-\widetilde{u}-\frac{p(v)-p(\widetilde{v})}{\sigma}+\frac{\Pi-\widetilde{\Pi}}{\sigma}\right)^2\dif\xi
+\sigma\int_\mathbb{R}a^{-X}_{\xi}H(v|\widetilde{v})\dif\xi\\
     &+\int_\mathbb{R}a^{-X}\widetilde{u}^R_{\xi}p(v|\widetilde{v})\dif\xi
     +\int_\mathbb{R}a^{-X}\frac{v}{\mu}(\Pi-\widetilde{\Pi})^2\dif\xi
     +\frac{\sigma\tau}{2\mu}\int_\mathbb{R}a^{-X}_{\xi}(\Pi-\widetilde{\Pi})^2\dif\xi.
\end{align*}
\end{lemma}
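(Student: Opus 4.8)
The plan is to derive the identity \eqref{60} by differentiating $\int_\mathbb{R} a^{-X}(\xi)\,\eta(U|\widetilde U)\,\dif\xi$ in time and carefully organizing all resulting terms into the three groups $\dot X(t)Y(U)$, $J^{bad}(U)$, and $J^{good}(U)$. First I would record the pointwise evolution equation for the relative entropy density $\eta(U|\widetilde U)=\tfrac12|u-\widetilde u|^2+H(v|\widetilde v)+\tfrac{\tau}{2\mu}|\Pi-\widetilde\Pi|^2$. For the $\tfrac12|u-\widetilde u|^2$ piece and the $\tfrac{\tau}{2\mu}|\Pi-\widetilde\Pi|^2$ piece this is a direct computation using the error equations \eqref{50}: multiply the second equation of \eqref{50} by $(u-\widetilde u)$ and the third by $\tfrac1\mu(\Pi-\widetilde\Pi)$. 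For the potential-energy piece $H(v|\widetilde v)$ I would use the standard relative-entropy identity $\partial_t H(v|\widetilde v) = -p(\widetilde v)\,\partial_t(v-\widetilde v) + \partial_t\widetilde v\,(p(\widetilde v)-p(v))$, i.e. $\partial_t H(v|\widetilde v) = H'(v)\partial_t v - H'(\widetilde v)\partial_t\widetilde v - H''(\widetilde v)\partial_t\widetilde v\,(v-\widetilde v) - p'(\widetilde v)(v-\widetilde v)\partial_t\widetilde v$, combined with $H'=-p$; here the equations for $(v-\widetilde v)$ and for $\widetilde v$ come from \eqref{50} and \eqref{43} respectively. Summing these three contributions and the $a^{-X}$-weight (whose time derivative produces the $-\dot X(t)a_\xi^{-X}$ factor) gives the raw expression.

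Next I would integrate by parts in $\xi$ to move derivatives off the difference quantities. The transport terms $-\sigma(\cdot)_\xi$ generate the $a_\xi^{-X}$ contributions: the crucial algebraic step is that the three flux contributions $-\sigma a_\xi^{-X}\big[\tfrac12|u-\widetilde u|^2 + H(v|\widetilde v) + \tfrac{\tau}{2\mu}|\Pi-\widetilde\Pi|^2\big]$ together with the cross terms coming from $(p(v)-p(\widetilde v))_\xi$, $(\Pi-\widetilde\Pi)_\xi$, and $\mu(u-\widetilde u)_\xi$ must be reassembled into the perfect square $\tfrac{\sigma}{2}a_\xi^{-X}\big(u-\widetilde u - \tfrac{p(v)-p(\widetilde v)}{\sigma} + \tfrac{\Pi-\widetilde\Pi}{\sigma}\big)^2$ plus leftover terms. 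Completing that square is the bookkeeping heart of the computation; the "square" piece plus $\sigma\int a_\xi^{-X}H(v|\widetilde v)$, the rarefaction pressure term $\int a^{-X}\widetilde u^R_\xi p(v|\widetilde v)$ (which arises because $\widetilde v$ satisfies \eqref{43} with the rarefaction piece contributing $\widetilde u^R_\xi$ through $(\widetilde v_{-X})_t - \sigma(\widetilde v_{-X})_\xi + \dot X(\widetilde v^S)_\xi^{-X} = (\widetilde u_{-X})_\xi$ and the relation $\widetilde u^R_t = \sigma\widetilde u^R_\xi - p(\widetilde v^R)_\xi$), the damping term $\int a^{-X}\tfrac{v}{\mu}(\Pi-\widetilde\Pi)^2$ from the relaxation term $v\Pi-\widetilde v\widetilde\Pi = v(\Pi-\widetilde\Pi) + \widetilde\Pi(v-\widetilde v)$, and the term $\tfrac{\sigma\tau}{2\mu}\int a_\xi^{-X}(\Pi-\widetilde\Pi)^2$ constitute $J^{good}(U)$. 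The remaining cross terms — those involving $a_\xi^{-X}(p(v)-p(\widetilde v))^2$, $a^{-X}(\widetilde v^S)_\xi^{-X}p(v|\widetilde v)$, the mixed $\Pi$–$p$ and $\widetilde\Pi$–$(v-\widetilde v)$ products, and the error terms $F_1,F_2$ — go into $J^{bad}(U)$. All terms proportional to $\dot X(t)$ collect into $\dot X(t)Y(U)$, where the four summands of $Y(U)$ come respectively from: $\dot X$ acting on the weight $a^{-X}$ in $\partial_t$, and $\dot X$ multiplying $(\widetilde u^S)_\xi^{-X}$, $(\widetilde v^S)_\xi^{-X}$, $(\widetilde\Pi^S)_\xi^{-X}$ in the three components of \eqref{43}.

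The main obstacle I expect is purely organizational rather than conceptual: tracking the signs and coefficients through the integration by parts so that the completed square emerges cleanly, and in particular making sure that the cross term $\tfrac1\sigma(p(v)-p(\widetilde v))(\Pi-\widetilde\Pi)$ and the $\tfrac{1}{2\sigma}(\Pi-\widetilde\Pi)^2$ contributions are split correctly between the perfect square (inside $J^{good}$) and the residual $J^{bad}$ terms — since $\Pi$ appears both in the momentum flux and in the relaxation equation, the same product can be generated twice with different prefactors, and one must be careful not to double-count. A secondary subtlety is justifying that all boundary terms at $\xi=\pm\infty$ vanish, which follows from the exponential decay of $\widetilde U - (v_\pm,u_\pm,0)$ in Lemma \ref{Le2} and Lemma \ref{Le4} together with the a priori $H^2$ bound \eqref{45} and Sobolev embedding. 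Once the identity is established, no estimation is needed in this lemma itself; the quantitative control of $Y(U)$, $J^{bad}(U)$, and $J^{good}(U)$ is deferred to the subsequent analysis leading to Lemma \ref{Le5}.
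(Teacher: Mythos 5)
Your plan is essentially the paper's proof: differentiate the weighted relative entropy, use the error equations \eqref{50} (equivalently \eqref{eq40} and \eqref{43}) as multipliers, integrate by parts, and complete the square in $(u-\widetilde u,\,p(v)-p(\widetilde v),\,\Pi-\widetilde\Pi)$ via $\alpha x^2+\beta xy+\kappa xz=\alpha\left(x+\tfrac{\beta}{2\alpha}y+\tfrac{\kappa}{2\alpha}z\right)^2-\tfrac{\beta^2}{4\alpha}y^2-\tfrac{\kappa^2}{4\alpha}z^2-\tfrac{\beta\kappa}{2\alpha}yz$, sorting the result into $\dot X(t)Y$, $J^{bad}$ and $J^{good}$ exactly as the paper does with its terms $I_1,\dots,I_8$. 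The only caveat is that both versions of the time-derivative formula for $H(v|\widetilde v)$ you wrote down contain algebraic slips; the identity actually needed (and implicitly used in the paper) is $\partial_t H(v|\widetilde v)=-(p(v)-p(\widetilde v))\,\partial_t v+p^{\prime}(\widetilde v)(v-\widetilde v)\,\partial_t\widetilde v$, but this is a transcription error rather than a gap in the argument.
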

\begin{proof}
By the definition of the relative entropy with \eqref{58}, simple calculations give
\begin{align}
\frac{\dif}{\dif t}\int_\mathbb{R}a^{-X}(\xi)\eta(U(t,\xi)|\widetilde{U}(t,\xi))\dif\xi
=\dot{X}(t)Y(U)+\sum\limits_{i=1}\limits^{8}I_i,
\end{align}
where
\[
I_1:=\int_\mathbb{R}a^{-X}(u-\widetilde{u})\sigma(u-\widetilde{u})_{\xi}\dif{\xi},
\quad
I_2:=\int_\mathbb{R}a^{-X}[-p(v)\sigma v_{\xi}+p^{\prime}(\widetilde{v})\sigma\widetilde{v}_{\xi}(v-\widetilde{v})+p(\widetilde{v})\sigma v_{\xi}]\dif{\xi},
\]
\[
I_3:=\int_\mathbb{R}a^{-X}\frac{\Pi-\widetilde{\Pi}}{\mu}\sigma\tau(\Pi-\widetilde{\Pi})_{\xi}\dif{\xi},
\]
\[I_4:=\int_\mathbb{R}a^{-X}[-(u-\widetilde{u})(p(v)-p(\widetilde{v}))_{\xi}-p(v)u_{\xi}
+p^{\prime}(\widetilde{v})\widetilde{u}_{\xi}(v-\widetilde{v})+p(\widetilde{v})u_{\xi}]\dif{\xi},
\]
\[
I_5:=-\int_\mathbb{R}a^{-X}\frac{\Pi-\widetilde{\Pi}}{\mu}(v\Pi-\widetilde{v}\widetilde{\Pi})\dif{\xi},
\quad
I_6:=\int_\mathbb{R}a^{-X}[(u-\widetilde{u})(\Pi-\widetilde{\Pi})_{\xi}
+(u-\widetilde{u})_{\xi}(\Pi-\widetilde{\Pi})]\dif{\xi},
\]
\[
I_7:=-\int_\mathbb{R}a^{-X}(u-\widetilde{u})F_1\dif{\xi},
\quad
I_8:=-\int_\mathbb{R}a^{-X}\frac{\Pi-\widetilde{\Pi}}{\mu}F_2\dif{\xi},
\]
Integrating by parts, we have
\[
I_1=\int_\mathbb{R}a^{-X}(u-\widetilde{u})\sigma(u-\widetilde{u})_{\xi}\dif{\xi}
=-\frac{\sigma}{2}\int_\mathbb{R}a^{-X}_{\xi}(u-\widetilde{u})^2\dif{\xi},
\]
and
\[
\begin{aligned}
I_2&=\int_\mathbb{R}a^{-X}[-p(v)\sigma v_{\xi}+p^{\prime}(\widetilde{v})\sigma\widetilde{v}_{\xi}(v-\widetilde{v})+p(\widetilde{v})\sigma v_{\xi}]\dif{\xi}\\
&=\sigma\int_\mathbb{R}a^{-X}[-p(v)v_{\xi}+p(\widetilde{v})v_{\xi}]\dif{\xi}
-\sigma\int_\mathbb{R}a^{-X}p(\widetilde{v})(v-\widetilde{v})_{\xi}\dif\xi-
\sigma\int_\mathbb{R}a^{-X}_{\xi}p(\widetilde{v})(v-\widetilde{v})\dif\xi\\
&=\sigma\int_\mathbb{R}a^{-X}[-p(v)v_{\xi}+p(\widetilde{v})\widetilde{v}_{\xi}]\dif{\xi}
-\sigma\int_\mathbb{R}a^{-X}_{\xi}p(\widetilde{v})(v-\widetilde{v})\dif\xi\\
&=\sigma\int_\mathbb{R}a^{-X}[H(v)_{\xi}-H(\widetilde{v})_{\xi}]\dif{\xi}
-\sigma\int_\mathbb{R}a^{-X}_{\xi}p(\widetilde{v})(v-\widetilde{v})\dif\xi\\
&=-\sigma\int_\mathbb{R}a^{-X}_{\xi}(H(v)-H(\widetilde{v}))\dif{\xi}
+\sigma\int_\mathbb{R}a^{-X}_{\xi}H^{\prime}(\widetilde{v})(v-\widetilde{v})\dif\xi\\
&=-\sigma\int_\mathbb{R}a^{-X}_{\xi}H(v|\widetilde{v})\dif{\xi}.
\end{aligned}
\]
Similarly, we have
\[
I_3=\int_\mathbb{R}a^{-X}\frac{\Pi-\widetilde{\Pi}}{\mu}\sigma\tau(\Pi-\widetilde{\Pi})_{\xi}\dif{\xi}
=-\frac{\sigma\tau}{2\mu}\int_\mathbb{R}a^{-X}_{\xi}(\Pi-\widetilde{\Pi})^2\dif\xi,
\]
and
\[
\begin{aligned}
I_4&=\int_\mathbb{R}a^{-X}[-(u-\widetilde{u})(p(v)-p(\widetilde{v}))_{\xi}-p(v)u_{\xi}
+p^{\prime}(\widetilde{v})\widetilde{u}_{\xi}(v-\widetilde{v})+p(\widetilde{v})u_{\xi}]\dif{\xi},\\
&=\int_\mathbb{R}a^{-X}_{\xi}(u-\widetilde{u})(p(v)-p(\widetilde{v}))\dif\xi
+\int_\mathbb{R}a^{-X}(u-\widetilde{u})_{\xi}(p(v)-p(\widetilde{v}))\dif\xi\\
&+\int_\mathbb{R}a^{-X}[-p(v)+p(\widetilde{v})]u_{\xi}\dif\xi
+\int_\mathbb{R}a^{-X}p^{\prime}(\widetilde{v})\widetilde{u}_{\xi}(v-\widetilde{v})+u_{\xi}]\dif{\xi}\\
&+\int_\mathbb{R}a^{-X}p^{\prime}(\widetilde{v})\widetilde{u}_{\xi}(v-\widetilde{v})\dif{\xi}\\
&=\underbrace{\int_\mathbb{R}a^{-X}_{\xi}(u-\widetilde{u})(p(v)-p(\widetilde{v}))\dif\xi}\limits_{=:I_{9}}
-\int_\mathbb{R}a^{-X}(\widetilde{u}^R_{\xi}+\sigma\widetilde{v}^S_{\xi})p(v|\widetilde{v})\dif\xi.
\end{aligned}
\]
For estimates of $I_5, I_6$, noting that $v\Pi-\widetilde{v}\widetilde{\Pi}=v(\Pi-\widetilde{\Pi})+\widetilde{\Pi}(v-\widetilde{v})$,
we have
\[
\begin{aligned}
    I_5&=-\int_\mathbb{R}a^{-X}\frac{\Pi-\widetilde{\Pi}}{\mu}(v\Pi-\widetilde{v}\widetilde{\Pi})\dif{\xi}\\
       &=-\int_\mathbb{R}a^{-X}\frac{v}{\mu}(\Pi-\widetilde{\Pi})^2\dif{\xi}
       -\int_\mathbb{R}a^{-X}\frac{\widetilde{\Pi}}{\mu}(\Pi-\widetilde{\Pi})(v-\widetilde{v})\dif\xi,
   \end{aligned}
\]
and
\[
    I_6=\int_\mathbb{R}a^{-X}[(u-\widetilde{u})(\Pi-\widetilde{\Pi})_{\xi}+(u-\widetilde{u})_{\xi}(\Pi-\widetilde{\Pi})]\dif{\xi}
       =\underbrace{-\int_\mathbb{R}a^{-X}_{\xi}(u-\widetilde{u})(\Pi-\widetilde{\Pi})\dif{\xi}}\limits_{=:I_{10}}.
\]
Applying the equality $\alpha x^2+\beta xy+\kappa xz=\alpha(x+\frac{\beta}{2\alpha}y+\frac{\kappa}{2\alpha}z)^2-\frac{\beta^2}{4\alpha}y^2-\frac{\kappa}{4\alpha}z^2-\frac{\beta\kappa}{4\alpha}yz$ with
$x:=u-\widetilde{u},y:=p(v)-p(\widetilde{v}),z:=\Pi-\widetilde{\Pi}$, we have
\begin{align*}
I_1+I_{9}+I_{10}
                 =-&\frac{\sigma}{2}\int_\mathbb{R}a_{\xi}^{-X}\left(u-\widetilde{u}-\frac{p(v)-p(\widetilde{v})}{\sigma}
                 +\frac{\Pi-\widetilde{\Pi}}{\sigma}\right)^2\dif\xi
                 +\frac{1}{2\sigma}\int_\mathbb{R}a_{\xi}^{-X}((p(v)-p(\widetilde{v}))^2\dif\xi\\
                 &+\frac{1}{2\sigma}\int_\mathbb{R}a^{-X}_{\xi}(\Pi-\widetilde{\Pi})^2\dif\xi
                 -\frac{1}{\sigma}\int_\mathbb{R}a^{-X}_{\xi}(\Pi-\widetilde{\Pi})(p(v)-p(\widetilde{v}))\dif\xi.
\end{align*}
Therefore, combining the above estimates, we get the desired result.
\end{proof}
Now, using Lemma \ref{Le6} with a change of variable $\xi\mapsto\xi+X(t)$, we have
\begin{equation}\label{62}
\frac{\dif}{\dif t}\int_\mathbb{R}a\eta(U^{X}|\widetilde{U}^{X})d\xi=\dot{X}(t)Y(U^{X})+J^{bad}(U^{X})-J^{good}(U^{X})
\end{equation}
where
\[
\widetilde{U}^{X}:=
\begin{pmatrix}
\widetilde{v}^{X}\\
\widetilde{u}^{X}\\
\widetilde{\Pi}^{X}
\end{pmatrix}
=
\begin{pmatrix}
(\widetilde{v}^R)^{X}+\widetilde{v}^S-v_m\\
(\widetilde{u}^R)^{X}+\widetilde{u}^S-u_m\\
\widetilde{\Pi}^S+\left(\mu\frac{\widetilde{u}^R_{\xi}}{\widetilde{v}^R}\right)^X
\end{pmatrix},
U^{X}:=
\begin{pmatrix}
v^{X}\\
u^{X}\\
\Pi^{X}
\end{pmatrix},
\]
and $((\widetilde{v}^R)^{X}, (\widetilde{u}^R)^{X})=(\widetilde{v}^R, \widetilde{u}^R)\left(t,x+X(t)\right)$,
$(v^{X}, u^{X}, \Pi^{X})=(v, u, \Pi)(t,\xi+X(t))$.

For the terms $J^{bad}$ and $J^{good}$, we use the following notations:
\begin{equation}\label{63}
  \begin{aligned}
  J^{bad}(U):=\sum\limits_{i=1}\limits^5B_i(U)+S_1(U)+S_1(U),\quad
  J^{good}(U):=\sum\limits_{i=1}\limits^3G_i(U)+G(U)+G^R(U),
  \end{aligned}
\end{equation}
where
\[
  B_1(U):=\frac{1}{2\sigma}\int_\mathbb{R}a_{\xi}((p(v)-p(\widetilde{v}^{X}))^2\dif\xi,
  B_2(U):=\sigma\int_\mathbb{R}a(\widetilde{v}^S)_{\xi}p(v|\widetilde{v}^{X})\dif\xi,
\]
\[
  B_3(U):=-\frac{1}{\sigma}\int_\mathbb{R}a_{\xi}(\Pi-\widetilde{\Pi}^{X})(p(v)-p(\widetilde{v}^{X}))\dif\xi,
\quad  B_4(U):=-\int_\mathbb{R}a\frac{\widetilde{\Pi}^{X}}{\mu}(\Pi-\widetilde{\Pi}^{X})(v-\widetilde{v}^{X})\dif\xi,
\]
\[
  B_5(U):=\frac{1}{2\sigma}\int_\mathbb{R}a_{\xi}(\Pi-\widetilde{\Pi}^{X})^2\dif\xi,
\quad  S_1(U):=-\int_\mathbb{R}a(u-\widetilde{u}^{X})F_1^X\dif\xi,
\]
\[
  S_2(U):=-\int_\mathbb{R}a\frac{(\Pi-\widetilde{\Pi}^{X})}{\mu}F_2^{X}\dif\xi,
\]
and
\[
  G_1(U):=\frac{\sigma}{2}\int_\mathbb{R}a_{\xi}\left(u-\widetilde{u}^{X}-\frac{p(v)-p(\widetilde{v}^{X})}{\sigma}
  +\frac{\Pi-\widetilde{\Pi}^{X}}{\sigma}\right)^2\dif\xi,
\]
\[
  G_2(U):=\sigma\int_\mathbb{R}a_{\xi}H(v|\widetilde{v}^{X})\dif\xi,
 \quad G_3(U):=\frac{\sigma\tau}{2\mu}\int_\mathbb{R}a_{\xi}(\Pi-\widetilde{\Pi}^{X})^2\dif\xi,
\]
\[
  G(U):=\int_\mathbb{R}a\frac{v}{\mu}(\Pi-\widetilde{\Pi}^{X})^2\dif\xi,
 \quad G^R(U):=\int_\mathbb{R}a(\widetilde{u}^R_{\xi})^{X}p(v|\widetilde{v}^{X})\dif\xi.
\]
Without of confusion, we use the notation $(v,u,\Pi)=(v^X,u^X,\Pi^X)$ for simplicity.

For $Y$, we have from \eqref{58} that
\[
\begin{aligned}
&Y(U)=-\int_\mathbb{R}a_{\xi}\eta(U|\widetilde{U}^{X})\dif\xi
+\int_\mathbb{R}a\left(\widetilde{u}^S_{\xi}(u-\widetilde{u}^{X})
-p^{\prime}(\widetilde{v}^X)\widetilde{v}^S_{\xi}(v-\widetilde{v}^{X})
+\frac{\tau}{\mu}\widetilde{\Pi}^S_{\xi}(\Pi-\widetilde{\Pi}^{X})\right)\dif\xi\\
=&-\int_\mathbb{R}a_{\xi}\left(\frac{|u-\widetilde{u}^{X}|^2}{2}+H(v|\widetilde{v}^{X})
+\frac{\tau|\Pi-\widetilde{\Pi}^{X}|^2}{2\mu}\right)\dif\xi
+\int_\mathbb{R}a\left(\widetilde{u}^S_{\xi}(u-\widetilde{u}^{X})
-p^{\prime}(\widetilde{v}^X)\widetilde{v}^S_{\xi}(v-\widetilde{v}^{X})\right)\dif\xi\\
&+\int_\mathbb{R}a\frac{\tau}{\mu}\widetilde{\Pi}_{\xi}(\Pi-\widetilde{\Pi}^{X})\dif\xi.
\end{aligned}
\]
We rewrite the  function $Y$ as follows:
\[
Y:=\sum\limits_{i=1}\limits^8Y_i,
\]
where
\[
  Y_1(U):=\int_\mathbb{R}a\frac{\widetilde{u}^S_{\xi}}{\sigma}(p(v)-p(\widetilde{v}^X))\dif\xi,
  \quad Y_2(U):=-\int_\mathbb{R}ap(\widetilde{v}^S)_{\xi}(v-\widetilde{v}^{X})\dif\xi,
\]
\[
  Y_3(U):=\int_\mathbb{R}a\widetilde{u}^S_{\xi}\left(u-\widetilde{u}^X-\frac{p(v)-p(\widetilde{v}^X)}
  {\sigma}\right)\dif\xi,
  Y_4(U)=-\int_\mathbb{R}a\left(p^{\prime}(\widetilde{v}^X)\widetilde{v}^S_{\xi}
  -p(\widetilde{v}^S)_{\xi}\right)(v-\widetilde{v}^{X})\dif\xi,
\]
\[
  Y_5(U):=\int_\mathbb{R}a\frac{\tau}{\mu}(\widetilde{\Pi}^S)_{\xi}(\Pi-\widetilde{\Pi}^{X})\dif\xi,
  \quad Y_6(U):=-\int_\mathbb{R}a_{\xi}\frac{\tau|\Pi-\widetilde{\Pi}^{X}|^2}{2\mu}\dif\xi,
\]
\[
\begin{aligned}
  Y_7(U):=-\frac{1}{2}\int_\mathbb{R}a_{\xi}\left(u-\widetilde{u}^X-\frac{p(v)-p(\widetilde{v}^X)}{\sigma}\right)
          \cdot\left(u-\widetilde{u}^X+\frac{p(v)-p(\widetilde{v}^X)}{\sigma}\right)\dif\xi,
\end{aligned}
\]
\[
Y_8(U):=-\int_\mathbb{R}a_{\xi}H(v|\widetilde{v}^{X})\dif\xi
-\frac{1}{2}\int_\mathbb{R}a_{\xi}(\frac{p(v)-p(\widetilde{v}^X)}{\sigma})^2\dif\xi.
\]
Notice from \eqref{eq35} that
\begin{equation}\label{64}
  \dot{X}(t)=-\frac{M}{\delta_S}(Y_1+Y_2),
\end{equation}
which implies
\begin{equation}\label{65}
  \dot{X}(t)Y=-\frac{\delta_S}{M}|\dot X(t)|^2+\dot X(t)\sum\limits_{i=3}\limits^8Y_i.
\end{equation}
The following lemma is given by \cite{WY}.
\begin{lemma}\label{Le8}
There exist a constant $C>0$ such that
\[
\begin{aligned}
  &-\frac{\delta_S}{2M}|\dot{X}|^2+B_1+B_2-G_2-\frac{3}{4}D\\
  &\leq
  C\int_\mathbb{R}\left(-\widetilde{v}^S_{\xi}|p(v)-p(\widetilde{v}^X)|^2
  +|a_{\xi}||p(v)-p(\widetilde{v}^X)|^3
  +|a_{\xi}||(\widetilde{v}^R)^X-v_m||p(v)-p(\widetilde{v}^X)|^2\right)\dif\xi. 
\end{aligned}
\]
where
\begin{align*}
D=\int_{\mathbb R} \frac{a}{\gamma p(v)} |\partial_\xi(p(v)-p(\tilde v^X))|^2 \dif \xi.
\end{align*}
\end{lemma}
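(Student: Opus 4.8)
\textbf{Proof proposal for Lemma \ref{Le8}.}

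The plan is to follow the strategy of \cite{WY} and combine the terms $B_1$, $B_2$, $G_2$ and $\dot X$ so that the leading quadratic part in $p(v)-p(\widetilde v^X)$ produces a good sign, with the remainder controlled by $\frac34 D$ and by cubic/higher order error terms. First I would introduce the shorthand $P := p(v)-p(\widetilde v^X)$ and rewrite each of the four quantities in terms of $P$ using Lemma \ref{Le1}: for $B_2$ and $G_2$ I use parts 3) of Lemma \ref{Le1} to replace $p(v|\widetilde v^X)$ and $H(v|\widetilde v^X)$ by $\frac{\gamma+1}{2\gamma}\frac{1}{p(\widetilde v^X)}P^2$ and $\frac{p(\widetilde v^X)^{-1/\gamma-1}}{2\gamma}P^2$ respectively, up to the stated cubic corrections and $O(\delta)$-relative errors; for $B_1$ and $B_5$ the integrands are already quadratic in $P$ (resp. in $\Pi-\widetilde\Pi^X$), and the weight derivative $a_\xi$ satisfies $a_\xi = -\frac{\lambda}{\delta_S}p'(\widetilde v^S)\widetilde v^S_\xi = \frac{\lambda}{\delta_S}|p'(\widetilde v^S)|\widetilde v^S_\xi>0$ by \eqref{56}, so $|a_\xi|\sim\frac{\lambda}{\delta_S}\widetilde v^S_\xi$. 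The key algebraic point, exactly as in \cite{WY}, is that on the support of $\widetilde v^S_\xi$ one has $\widetilde v^X\approx v_m$, $p(\widetilde v^X)\approx p(v_m)$, and $\sigma\approx\sigma_m=\sqrt{-p'(v_m)}$, so that the coefficient of $-\widetilde v^S_\xi P^2$ coming from $\sigma B_2 - G_2$ (the part of $G_2$ localized against $a_\xi$) has a definite sign once $\lambda$ and $\delta_S$ are small; the precise constant $M=\frac{5(\gamma+1)\sigma_m^3}{8\gamma p(v_m)}$ in \eqref{eq35} is chosen exactly so that the $\dot X$-contribution $-\frac{\delta_S}{2M}|\dot X|^2$ absorbs the quadratic-in-$\dot X$ pieces while leaving a strictly negative residual multiple of $\int\widetilde v^S_\xi P^2$.

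Next I would carry out the completion-of-square / Young's inequality step that links the remaining pieces of $B_1, B_3, B_5$ and the $a_\xi$-weighted part of $G_2$ to the dissipation term $D=\int\frac{a}{\gamma p(v)}|\partial_\xi P|^2\,\dif\xi$. The mechanism is that $a_\xi$-weighted terms in $P^2$ and in $(\Pi-\widetilde\Pi^X)$ can be estimated, after integration by parts moving one derivative onto $a$, against $D$ plus lower order; since $\lambda \le C\sqrt{\delta_S}$ and $a_\xi\sim\frac{\lambda}{\delta_S}\widetilde v^S_\xi$ with $\int\widetilde v^S_\xi\,\dif\xi = v_+-v_m \sim \delta_S$, the total weight $\int a_\xi\,\dif\xi\sim\lambda$ is small, which is what makes the absorption into $\frac34 D$ possible with a small constant to spare. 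The cubic term $\int|a_\xi||P|^3\,\dif\xi$ and the term $\int|a_\xi||(\widetilde v^R)^X-v_m||P|^2\,\dif\xi$ are precisely the pieces that cannot be absorbed at this stage and are therefore kept on the right-hand side of the claimed inequality; the first will later be handled using the $L^\infty$ smallness of $P$ (from the a priori assumption \eqref{45}) and the second using the exponential localization of $\widetilde v^R-v_m$ against the support of $\widetilde v^S_\xi$ from Lemmas \ref{Le2} and \ref{Le4} (cf. Lemma \ref{Le9}).

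The main obstacle is bookkeeping the interaction between the shock profile and the rarefaction wave inside the weight. Because $a$ and $a_\xi$ are built purely from $\widetilde v^S$ but the relative quantities involve $\widetilde v^X = (\widetilde v^R)^X+\widetilde v^S-v_m$, every application of part 3) of Lemma \ref{Le1} requires $|p(\widetilde v^X)-p(v_m)|$ small, which forces one to split the $\xi$-integral into the region near the shock layer (where $\widetilde v^R\approx v_m$, so everything is a genuine perturbation of the pure-shock computation of \cite{WY}) and the far region (where $\widetilde v^S_\xi$ and $a_\xi$ are exponentially small and crude bounds suffice). Keeping track of the various small parameters $\delta_S,\delta_R,\lambda,\tau$ and verifying that the residual negative multiple of $\int\widetilde v^S_\xi P^2$ genuinely dominates is the delicate part; once the pure-shock case of \cite{WY} is in hand, the rarefaction corrections enter only through the harmless term $\int|a_\xi||(\widetilde v^R)^X-v_m||P|^2\,\dif\xi$ recorded on the right-hand side, so the argument reduces to a perturbation of the known estimate. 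I would therefore present the proof as: (i) reduce to the shock layer by the exponential decay estimates; (ii) quote/repeat the algebraic identity of \cite{WY} giving the sign of $\sigma B_2 - G_2 - \frac{\delta_S}{2M}|\dot X|^2$; (iii) absorb the $a_\xi$-quadratic remainders of $B_1,B_3,B_5,G_2$ into $\frac34 D$ using $\lambda\le C\sqrt{\delta_S}$ and Young's inequality; (iv) collect the un-absorbable cubic and rarefaction-interaction terms on the right.
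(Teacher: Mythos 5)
Your outline ultimately defers the heart of the argument to \cite{WY}, which is in fact exactly what the paper does: Lemma \ref{Le8} is not reproved here, it is quoted from \cite{WY} with the remark that, although the shift \eqref{eq35} is defined slightly differently, the proof is almost the same. The difficulty is that the mechanism you describe for the steps you would carry out yourself is not the one that makes the lemma true, and as stated it would fail. Writing $P:=p(v)-p(\widetilde v^X)$, the dissipation $D$ controls only $\partial_\xi P$ and vanishes on constants, so the $a_\xi$-weighted quadratic terms cannot be ``absorbed into $\tfrac34 D$'' merely because $\int_{\mathbb R}a_\xi\,\dif\xi\sim\lambda$ is small, and no integration by parts moving a derivative onto $a$ produces such an absorption. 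Worse, $B_2$ carries an order-one coefficient, $B_2\sim\int_{\mathbb R}\widetilde v^S_\xi|P|^2\,\dif\xi$ with no factor $\lambda/\delta_S$, so smallness of $\lambda$ cannot beat it at all.

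What the proof in \cite{WY} (following Kang--Vasseur \cite{KV3}) actually does is normalize along the shock, $y\sim(p(v_m)-p(\widetilde v^S(\xi)))/\delta_S$, and split $P$ into its average and its fluctuation with respect to the measure $\widetilde v^S_\xi\,\dif\xi$ (equivalently $a_\xi\,\dif\xi$). The squared average is precisely what $-\frac{\delta_S}{2M}|\dot X|^2=-\frac{M}{2\delta_S}|Y_1+Y_2|^2$ (by \eqref{64}) controls --- this is where the exact value $M=\frac{5(\gamma+1)\sigma_m^3}{8\gamma p(v_m)}$ matters --- while the fluctuation is controlled by $\tfrac34D$ through the sharp Poincar\'e-type inequality $\int_0^1|W-\overline W|^2\,\dif y\le\frac12\int_0^1y(1-y)|\partial_yW|^2\,\dif y$. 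This average/fluctuation decomposition plus the sharp Poincar\'e inequality is the missing idea in your sketch; it is what defeats $B_2$ and produces the negative term $-C\int\widetilde v^S_\xi|P|^2\,\dif\xi$ on the right-hand side, whereas your reading of the $\dot X$-term as ``absorbing quadratic-in-$\dot X$ pieces'' misses its role of cancelling the mean part of the bad quadratic forms. Two smaller slips: $B_3$ and $B_5$ are not part of this lemma (they are estimated separately afterwards in the proof of Lemma \ref{Le5}), and no shock-layer/far-field splitting is needed for Lemma \ref{Le1} 3), since $\delta_S+\delta_R$ and $\varepsilon_1$ are globally small and the interaction is exactly the recorded term $\int|a_\xi||(\widetilde v^R)^X-v_m||P|^2\,\dif\xi$.
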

We note that although the definition of $X(t)$ is different with that in \cite{WY}, the proof of the above lemma is almost the same.

Now, we are ready to prove Lemma \ref{Le5}.

{\bf{Proof of Lemma \ref{Le5}}}:

 First of all, we use \eqref{62}, \eqref{63} and \eqref{65} to have
\[
\begin{aligned}
  \frac{\dif}{\dif t}\int_\mathbb{R}a&\eta(U^{X}|\widetilde{U}^{X})\dif\xi
  =-\frac{\delta_S}{2M}|\dot{X}|^2+B_1+B_2-G_2-\frac{3}{4}D\\
  &+\left(-\frac{\delta_S}{2M}|\dot{X}|^2\right)+\dot{X}\sum\limits_{i=3}^8Y_i
  +\sum\limits_{i=3}^5B_i+S_1+S_2-G_1-G_3-G-G^R+\frac{3}{4}D.
\end{aligned}
\]
Using Lemma \ref{Le8} and Young's inequality, we find that there exist $C>0$ such that
\[
\begin{aligned}
  &\frac{\dif}{\dif t}\int_\mathbb{R}a\eta(U^{X}|\widetilde{U}^{X})\dif\xi\\
  &\leq-C\int_\mathbb{R}(\widetilde{v}^S)_{\xi}|p(v)-p(\widetilde{v}^X)|^2\dif\xi
  +\underbrace{C\int_\mathbb{R}|a_{\xi}||p(v)-p(\widetilde{v}^X)|^3\dif{\xi}}\limits_{=:K_1}\\
  &+\underbrace{C\int_\mathbb{R}|a_{\xi}||(\widetilde{v}^R)^X-v_m||p(v)-p(\widetilde{v}^X)|^2\dif{\xi}}\limits_{=:K_2}\\
  &+\left(-\frac{\delta_S}{4M}|\dot{X}|^2\right)
  +\frac{C}{\delta_S}\sum\limits_{i=3}^8|Y_i|^2+\sum\limits_{i=3}^5B_i+S_1+S_2-G_1-G_3-G-G^R+D.
\end{aligned}
\]
Next, we set
\begin{equation}\label{78}
G^S:=\int_\mathbb{R}(\widetilde{v}^S)_{\xi}|p(v)-p(\widetilde{v}^X)|^2\dif \xi.
\end{equation}
For $K_1$, we use the notation $w=p(v)-p(\widetilde{v}^X)$. Use \eqref{56}, the interpolation inequality and Young's inequality, one obtain
\begin{align*}
  K_1&\leq C\frac{\lambda}{\delta_S}\int_\mathbb{R}\|w\|^2_{L^{\infty}}|(\widetilde{v}^S)_{\xi}||w|\dif\xi\\
  &\leq C\frac{\lambda}{\delta_S}\|w_{\xi}\|_{L^2}\|w\|_{L^2}\sqrt{\int_\mathbb{R}|(\widetilde{v}^S)_{\xi}|w^2\dif\xi}
  \sqrt{\int_\mathbb{R}|(\widetilde{v}^S)_{\xi}|\dif\xi}\\
&\leq C\varepsilon_1\|w_{\xi}\|_{L^2}\sqrt{\int_\mathbb{R}|(\widetilde{v}^S)_{\xi}|w^2\dif\xi}\\
  &\leq C\varepsilon_1(D+G^S).
\end{align*}
Similarly, for $K_2$,  using \eqref{45}, Lemma \ref{Le9} and Young's inequality,  we have
\begin{align*}
 K_2&\leq C\frac{\lambda}{\delta_S}\|w\|^2_{L^4}|\left\||(\widetilde{v}^S)_{\xi}||(\widetilde{v}^R)^X-v_m|\right\|_{L^2}\\
  &\leq C\frac{\lambda}{\delta_S}\|w_{\xi}\|^{1/2}_{L^2}\|w\|^{3/2}_{L^2}
  \||(\widetilde{v}^S)_{\xi}||(\widetilde{v}^R)^X-v_m|\|_{L^2}\\
 &\leq C\frac{\varepsilon_1}{\sqrt{\delta_S}}\|w_{\xi}\|^{1/2}_{L^2}\left\||(\widetilde{v}^S)_{\xi}||(\widetilde{v}^R)^X-v_m|\right\|_{L^2}\\
 &\leq C\varepsilon_1\left(D+\frac{1}{\delta_S}\left\||(\widetilde{v}^S)_{\xi}||(\widetilde{v}^R)^X-v_m|\right\|_{L^2}^2\right).
\end{align*}
For $Y_i$, by using H\"{o}lder inequality, we first get
\[
\begin{aligned}
 |Y_3|&\leq C\frac{\delta_S}{\lambda}\int_\mathbb{R}|a_{\xi}|
 \left|u-\widetilde{u}^{X}-\frac{p(v)-p(\widetilde{v}^{X})}{\sigma}\right|\dif\xi\leq C\frac{\delta_S}{\sqrt{\lambda}}\sqrt{G_1+G}.
\end{aligned}
\]
Meanwhile, using Lemmas \ref{Le1}, \ref{Le2}, \ref{Le4} and H\"{o}lder inequality, we have
\[
|Y_4|\leq C\int_\mathbb{R}|(\widetilde{v}^{R})^X-v_m|\widetilde{v}^{S}_{\xi}|w|\dif\xi
\leq C\delta_R\sqrt{\delta_S}\sqrt{G^S}.
\]
Similarly, for $Y_5, Y_6, Y_7$, we have
\begin{align*}
 |Y_5|\leq C\tau\int_\mathbb{R}|(\widetilde{\Pi}^{S})_{\xi}||\Pi-\widetilde{\Pi}^{X}|\dif\xi\leq C\tau\delta_S^3\sqrt{G},
\end{align*}
\[
|Y_6|\leq \sqrt{\tau} |\Pi-\widetilde \Pi^X|_{L^\infty} \int_\mathbb {R} a_\xi \frac{\sqrt{\tau} |\Pi-\widetilde \Pi|}{2\mu}\dif x \le C \varepsilon_1 \sqrt{\tau} \lambda\sqrt{\delta_S} \sqrt{G},
\]
and
\[
 |Y_7|\leq C\varepsilon_1|G_1+G|^{\frac{1}{2}}\|a_{\xi}\|_{L^{\infty}(\mathbb{R})}^{\frac{1}{2}}\leq C\varepsilon_1\lambda(\delta_S)^{\frac{1}{2}}|G_1+G|^{\frac{1}{2}}.
\]
For $Y_8$, using Lemma \ref{Le1}, we have
\[
Y_8\leq C\int_\mathbb{R}|a_{\xi}|w^2\dif\xi.
\]
Then, using \eqref{56} and Lemma \ref{Le2}, we have
\[
\begin{aligned}
 \frac{C}{\delta_S}|Y_8|^2\leq \frac{C\lambda^2}{\delta_S^3}\left(\int_\mathbb{R}|\widetilde{v}^S_{\xi}|w^2\dif\xi\right)^2
 \leq \frac{C\lambda^2}{\delta_S}\|w\|^2_{L^2(\mathbb{R})}\int_\mathbb{R}|\widetilde{v}^S_{\xi}|w^2\dif\xi
 \leq C\varepsilon_1^2G^S.
\end{aligned}
\]
For $B_3$, using \eqref{56}, Lemma \ref{Le2} and Young's inequality, we have
\[
\begin{aligned}
   |B_3|&\leq C\lambda\int_\mathbb{R}(\Pi-\widetilde{\Pi}^X)|(\widetilde{v}^{S})_{\xi}|^{\frac{1}{2}}w\dif\xi
   \leq C\lambda(G+G^S).
\end{aligned}
\]
For $B_4$,  we have
\[
\begin{aligned}
   |B_4|&\leq C\int_\mathbb{R}(|\widetilde{\Pi}^S|+|(\widetilde{u}^R)^X_{\xi}|)|\Pi-\widetilde{\Pi}^X||v-\widetilde{v}^X|\dif\xi\\
        &\leq C\int_\mathbb{R}|\widetilde{\Pi}^S||\Pi-\widetilde{\Pi}^X||v-\widetilde{v}^X|\dif\xi
        +C\int_\mathbb{R}(\widetilde{u}^R)^X_{\xi}|\Pi-\widetilde{\Pi}^X||v-\widetilde{v}^X|\dif\xi\\
        &:=J_1+J_2.
\end{aligned}
\]
Using $\widetilde{S}^S\sim\widetilde{v}^S_{\xi}$, Lemma \ref{Le2} and Young's inequality, we have
\[
   J_1\leq C\delta_S\int_\mathbb{R}|\Pi-\widetilde{\Pi}^X||\widetilde{v}^S_{\xi}|^{\frac{1}{2}}|v-\widetilde{v}^X|\dif\xi
        \leq C\delta_S(G+G^S).
\]
Similarly ,for $J_2$, using Lemma \ref{Le4} and Young's inequality, we have
\[
\begin{aligned}
  J_2\leq C\delta_R^{\frac{1}{2}}\int_\mathbb{R}|\Pi-\widetilde{\Pi}^X||(\widetilde{u}^R)^X_{\xi}|^{\frac{1}{2}}|v-\widetilde{v}^X|\dif\xi
  \leq C \delta_R^{\frac{1}{2}}(G+G^R).
\end{aligned}
\]
Thus,
\[
|B_4|\leq C(\delta_S+\delta_R^{\frac{1}{2}})G+\delta_SG^S+\delta_R^{\frac{1}{2}}G^R.
\]
For $B_5$, using $\|a_{\xi}\|_{L^{\infty}}\leq C\lambda\delta_S$, we have
\[
|B_5|\leq C\lambda\delta_SG.
\]

Then, using \eqref{44}, Sobolev's imbedding theorem and Young's inequality,  and noting that
\begin{equation}\label{79}
\begin{aligned}
 &|p(\widetilde{v}^X)_{\xi}-p((\widetilde{v}^R)^X)_{\xi}-p(\widetilde{v}^S)_{\xi}|\leq C(|(\widetilde{v}^R)^X_{\xi}||\widetilde{v}^S-v_m|+|\widetilde{v}^S_{\xi}||(\widetilde{v}^R)^X-v_m|),
\end{aligned}
\end{equation}
and
\[
\mu\left(\frac{(\widetilde{u}^R)^X_{\xi}}{(\widetilde{v}^R)^X}\right)_{\xi}=
\mu\left(\frac{(\widetilde{u}^R)^X_{\xi\xi}(\widetilde{v}^R)^X-(\widetilde{u}^R)^X_{\xi}
(\widetilde{v}^R)^X_{\xi}}{((\widetilde{v}^R)^X)^2}\right),
\]
we have
\[
\begin{aligned}
& |S_1|\leq C\int_\mathbb{R}|u-\widetilde{u}^X|\left(|p(\widetilde{v}^X)_{\xi}-p((\widetilde{v}^R)^X)_{\xi}-p(\widetilde{v}^S)_{\xi}|
 +|(\widetilde{u}^R)^X_{\xi\xi}|+|(\widetilde{u}^R)^X_{\xi}(\widetilde{v}^R)^X_{\xi}|\right)\dif\xi\\
 &\leq C\varepsilon_1\left(\left\|(\widetilde{v}^R)^X_{\xi}||\widetilde{v}^S-v_m|\right\|_{L^2}
 +\left\||\widetilde{v}^S_{\xi}||(\widetilde{v}^R)^X-v_m\right\|_{L^2}+
 \|(\widetilde{u}^R)^X_{\xi\xi}\|_{L^2}
 +\|(\widetilde{u}^R)^X_{\xi}\|_{L^4}\|(\widetilde{v}^R)^X_{\xi}\|_{L^4}\right).
\end{aligned}
\]
Similarly, for $S_2$, using \eqref{44}, $|\widetilde{\Pi}^S|\sim|\widetilde{v}^S_{\xi}|$ and Lemma \ref{Le4}, we have
\[
\begin{aligned}
 |S_2|&\leq C\int_\mathbb{R}|\Pi-\widetilde{\Pi}^X|\left[\tau(|(\widetilde{u}^R)^X_{t\xi}|+|(\widetilde{u}^R)^X_{\xi}(\widetilde{v}^R)^X_{t}|
 +|(\widetilde{u}^R)^X_{\xi\xi}|+|(\widetilde{u}^R)^X_{\xi}(\widetilde{v}^R)^X_{\xi}|)\right.\\
 &\qquad\left.+|(\widetilde{v}^R)^X_{\xi}||\widetilde{v}^S-v_m|
 +|\widetilde{v}^S_{\xi}||(\widetilde{v}^R)^X-v_m|\right]\dif\xi\\
       &\leq \frac{1}{64}G+C\tau^2\left(\left\|\left((\widetilde{v}^R)^X_{\xi\xi},(\widetilde{v}^R)^X_{\xi\xi}\right)\right\|^2_{L^2}
 +\|(\widetilde{u}^R)^X_{\xi}\|^4_{L^4}
 +\|(\widetilde{u}^R)^X_{\xi}\|^2_{L^4}\|(\widetilde{v}^R)^X_{\xi}\|^2_{L^4}\right)\\
 &\qquad+C\left(\left\|(\widetilde{v}^R)^X_{\xi}||\widetilde{v}^S-v_m|\right\|^2_{L^2}
 +\left\||\widetilde{v}^S_{\xi}||(\widetilde{v}^R)^X-v_m\right\|^2_{L^2}\right).
\end{aligned}
\]
Noting that from \eqref{eq33}, we get $\widetilde{u}^R_x=\sqrt{-p^{\prime}(\widetilde{v}^R)}\widetilde{v}^R_x$, then we have $\widetilde{u}^R_x\sim\widetilde{v}^R_x$.\\
In addition, since
\[
(p(v)-p(\widetilde{v}^X))_{\xi}=p^{\prime}(v)(v-\widetilde{v}^X)_{\xi}
+\widetilde{v}^X_{\xi}(p^{\prime}(v)-p^{\prime}(\widetilde{v}^X)),
\]
using \eqref{eq41}, Lemma \ref{Le2} and Sobolev's imbedding theorem, we have
\[
\begin{aligned}
D&\leq C\left(\int_\mathbb{R}\left(\partial_{\xi}(v-\widetilde{v}^X)\right)^2\dif\xi+
\int_\mathbb{R}[(\widetilde{v}^S)^2+(\widetilde{v}^R)^2](v-\widetilde{v}^X)^2\dif\xi\right)\\
&\leq C\left(\int_\mathbb{R}\left(\partial_{\xi}(v-\widetilde{v}^X)\right)^2\dif\xi+
\delta_S^2G^S+\delta_RG^R\right).
\end{aligned}
\]
Finally, let $\epsilon=\delta_R^3$, using the smallness of $\delta_S,\delta_R,\varepsilon_1$ and combining the above estimates, then integrating it over $[0,T]$ for any $t\leq T$, we derive that
\[
\begin{aligned}
&\int_\mathbb{R}\eta(U|\widetilde{U}^X)\dif\xi+\delta_S\int_0^t|\dot{X}|^2\dif t+\int_0^t(G_1+G_2+G_3+G+G^R+G^S)\dif t\\
&\quad \leq C\int_\mathbb{R}\eta(U(0,\xi)|\widetilde{U}^X(0,\xi))\dif\xi
+C\int_0^t\int_\mathbb{R}\left(\partial_{\xi}(v-\widetilde{v}^X)\right)^2\dif\xi\dif t+C\delta_R^{\theta},
\end{aligned}
\]
where $\theta=\min\{\frac{1}{2}, \frac{3}{2}-\frac{1}{q}\}$.

Using Lemma \ref{Le1}, we have finished proof of Lemma \ref{Le5}.

\subsection{High-order energy estimates}
In this section, we show the high-order energy estimates for the system \eqref{50}. Let $\Phi=v-\widetilde{v},\Psi=u-\widetilde{u},Q=\Pi-\widetilde{\Pi}$, then we have the following lemma.
\begin{lemma}\label{Le10}
There exist a constant $C>0$ such that for $0\leq t\leq T$, we have
\[
\begin{aligned}
&\|\partial_{\xi}\Phi\|_{H^1}^2+\|\partial_{\xi}\Psi\|_{H^1}^2
+\tau\|\partial_{\xi}Q\|_{H^1}^2
+\int_0^t\|\partial_{\xi}Q\|_{H^1}^2\dif t\leq
 C\delta_S^2\int_0^t|\dot{X}|^2\dif t\\
&+C(\|\partial_{\xi}\Phi_0\|_{H^1}^2+\|\partial_{\xi}\Psi_0\|_{H^1}^2
+\tau\|\partial_{\xi}Q_0\|_{H^1}^2)
+C\int_0^t\left(\delta_S\mathcal{G}^S(U)+
\delta_R^{\frac{1}{2}}\mathcal{G}^R(U)\right)\dif t\\
&+C(\delta+\varepsilon_1)\int_0^t(\|\partial_{\xi}\Phi\|_{H^1}^2
+\|\partial_{\xi}\Psi\|_{H^1}^2)\dif t
+C\delta \int_0^tG(U)\dif t+C\delta_R^{\theta},
\end{aligned}
\]
where $\delta=\delta_S+\delta_R$,
$\Phi_0=v_0-\widetilde{v}_0(\xi), \Psi_0=u_0-\widetilde{u}_0(\xi), Q_0=\Pi_0-\widetilde{\Pi}_0(\xi)$
and $\theta=\min\{\frac{1}{2}, \frac{3}{2}-\frac{1}{q}\}$.
\end{lemma}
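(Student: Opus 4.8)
The plan is to differentiate the error system \eqref{50} in $\xi$ (and then once more) and carry out standard weighted energy estimates, being careful that the only genuinely new feature compared with \cite{WY} is the weak dissipation coming from the relaxed constitutive law: the third equation of \eqref{50} gives a damping term $v\,Q$ rather than a diffusion term, so we will only directly control $\int_0^t\|\partial_\xi Q\|_{H^1}^2\,\dif t$ via the mass-type structure, and must extract the dissipation of $\partial_\xi\Phi$ and $\partial_\xi\Psi$ by exploiting the coupling. First I would rewrite \eqref{50} in the variables $\Phi,\Psi,Q$, use $\eqref{50}_1$ to substitute $\Phi_t=\sigma\Phi_\xi+\dot X(\widetilde v^S)^{-X}_\xi+\Psi_\xi$, and apply $\partial_\xi$ to all three equations to obtain the system for $\Phi_\xi,\Psi_\xi,Q_\xi$ with forcing terms built from $\partial_\xi F_1$, $\partial_\xi F_2$, $\dot X$ times derivatives of the shock profile, and quadratic error terms involving $\widetilde v^R_\xi,\widetilde v^S_\xi$.

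The core estimate is the weighted $L^2$ energy identity: multiply the $\Phi_\xi$-equation by a suitable multiple of $p'(v)\Phi_\xi$ (or work with $(p(v)-p(\widetilde v^X))_\xi$ as in the computation of $D$ above, since that is what Lemma \ref{Le5} leaves uncontrolled), multiply the $\Psi_\xi$-equation by $\Psi_\xi$, and multiply the $Q_\xi$-equation by $Q_\xi/\mu$; add. The good terms are $\frac{\dif}{\dif t}$ of $\|\partial_\xi\Phi\|^2+\|\partial_\xi\Psi\|^2+\tau\|\partial_\xi Q\|^2$ plus the damping $\int \frac{v}{\mu}|\partial_\xi Q|^2$; the transport terms $-\sigma\partial_\xi(\cdot)$ integrate to harmless $a$-free boundary-free quantities. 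The key algebraic point — exactly as in the relaxation-limit literature — is that $\mu\,\Psi_\xi = \tau Q_t - \sigma\tau Q_\xi + (v\Pi-\widetilde v\widetilde\Pi) + F_2$ modulo lower order, so $\|\partial_\xi\Psi\|^2$ is recovered (up to $\tau$-weighted time derivatives that get absorbed after integration by parts in $t$, and up to $\|Q\|^2$-type terms controlled by $G(U)$ from Lemma \ref{Le5}), and then the $\Phi_\xi$ dissipation is recovered from the $\Psi_\xi$-equation by testing against $\partial_\xi(p(v)-p(\widetilde v^X))$ and integrating by parts, which converts $(p(v)-p(\widetilde v^X))_{\xi\xi}$-type contributions into $\|\partial_\xi\Phi\|^2$ with a favorable sign. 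All error terms are estimated exactly as in Lemma \ref{Le5}: $\partial_\xi F_1$, $\partial_\xi F_2$ produce, after Cauchy--Schwarz and Young, contributions bounded by $C\delta_R^\theta$ plus $C(\delta+\varepsilon_1)\|\partial_\xi\Phi\|_{H^1}^2$, using Lemma \ref{Le2}, Lemma \ref{Le4} and Lemma \ref{Le9}; the $\dot X$-terms give $C\delta_S^2\int_0^t|\dot X|^2\,\dif t$ because they always carry $(\widetilde v^S)^{-X}_\xi$ or $(\widetilde u^S)^{-X}_\xi$, which is $O(\delta_S^2)$ in $L^\infty$ and $O(\delta_S^{3/2})$ in $L^2$; the quadratic nonlinear interactions $\widetilde v^S_\xi\Phi_\xi$, $\widetilde u^R_\xi\Phi_\xi$ are absorbed into $\delta_S\mathcal G^S(U)+\delta_R^{1/2}\mathcal G^R(U)$ via Young's inequality with the $\widetilde v^S_\xi$, $\widetilde u^R_\xi$ weights split as $(\text{weight})^{1/2}\cdot(\text{weight})^{1/2}$.

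Then I would repeat the whole procedure one derivative higher: apply $\partial_\xi^2$ to \eqref{50}, test as above, and close the $H^2$ estimate. Here one uses the a priori bound \eqref{45} together with Sobolev embedding $H^1\hookrightarrow L^\infty$ to handle the cubic and higher commutator terms (e.g.\ $\partial_\xi^2(p(v)-p(\widetilde v))$ expands into terms like $p'(v)\partial_\xi^2\Phi$, $p''(v)(\partial_\xi v)^2$, etc., each controlled by $\varepsilon_1$ or $\delta$ times the dissipation), and one again recovers $\|\partial_\xi^2\Phi\|^2$, $\|\partial_\xi^2\Psi\|^2$ from the relaxation equation and the momentum equation in the same two-step fashion. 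Summing the first- and second-order estimates with small coefficients gives the stated inequality for $\|\partial_\xi\Phi\|_{H^1}^2+\|\partial_\xi\Psi\|_{H^1}^2+\tau\|\partial_\xi Q\|_{H^1}^2+\int_0^t\|\partial_\xi Q\|_{H^1}^2\,\dif t$, with exactly the right-hand side displayed in the lemma. The main obstacle, and the place the relaxed structure really bites, is the recovery of the $\partial_\xi\Phi$ (and $\partial_\xi^2\Phi$) dissipation: unlike \cite{WY}, there is no Bresch--Desjardins entropy producing $\int|\partial_\xi\Phi|^2$ for free, so the dissipation of $\Phi_\xi$ must be bootstrapped through the momentum and constitutive equations, which forces the $H^2$-level estimate (the extra derivative is needed precisely so that the terms $\tau Q_t$, $\tau Q_{\xi}$ appearing when one solves $\mu\Psi_\xi$ for $\Psi_\xi$ are controllable), and one must check carefully that the constant $C_0$ stays independent of $\tau$, which is where the hypothesis $\tau\le\min\{\inf_{\widetilde v^S}\mu/|\sigma^2+p'(\widetilde v^S)|,1\}$ is used to keep the coefficient in \eqref{eq27} and the damping $v\Pi$ uniformly coercive.
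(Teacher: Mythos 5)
Your proposal follows essentially the same route as the paper: apply $\partial_\xi^k$ ($k=1,2$) to \eqref{50}, test with $-p^{\prime}(v)\partial_\xi^k\Phi$, $\partial_\xi^k\Psi$, $\frac{1}{\mu}\partial_\xi^k Q$, keep only the relaxation damping $\int_{\mathbb R}\frac{v}{\mu}|\partial_\xi^k Q|^2\dif\xi$ as dissipation, and bound all remainders (the $\dot X$-terms, $\partial_\xi^k F_1$, $\partial_\xi^k F_2$ and the commutator terms) by the allowed right-hand side using Lemmas \ref{Le2}, \ref{Le4}, \ref{Le9}, the a priori bound \eqref{45}, Sobolev embedding and Young's inequality. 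The additional machinery you sketch for recovering the $\partial_\xi\Phi$, $\partial_\xi\Psi$ dissipation is not needed for this lemma, whose left-hand side only contains the $Q$-dissipation; that recovery is exactly the content of the separate dissipative estimates (Lemmas \ref{Le11} and \ref{Le12}) combined afterwards in the proof of Proposition \ref{p1}.
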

\begin{proof}
Applying $\partial^k_{\xi}(k=1,2)$ to the system \eqref{50}, we derive that
\begin{equation}\label{80}
\begin{cases}
\partial_{t}\partial_{\xi}^k\Phi-\sigma\partial_{\xi}^{k+1}\Phi-\dot{X}(t)\partial_{\xi}^{k+1}(\widetilde{v}^S)^{-X}-\partial_{\xi}^{k+1}\Psi=0,\\
\partial_{t}\partial_{\xi}^k\Psi-\sigma\partial_{\xi}^{k+1}\Psi-\dot{X}(t)\partial_{\xi}^{k+1}(\widetilde{u}^S)^{-X}
          +p^{\prime}(v)\partial_{\xi}^{k+1}\Phi=\partial_{\xi}^{k+1}Q+F_3^k,\\
\tau\partial_{t}\partial_{\xi}^kQ-\sigma\tau\partial_{\xi}^{k+1}Q-\tau\dot{X}(t)\partial_{\xi}^{k+1}(\widetilde{\Pi}^S)^{-X}
             +v\partial_{\xi}^{k}Q=\mu\partial_{\xi}^{k+1}\Psi+F_4^k,
\end{cases}
\end{equation}
where $$F_3^k=p^{\prime}(v)\partial^{k+1}_{\xi}(v-\widetilde{v})-\partial^{k+1}_{\xi}(p(v)-p(\widetilde{v}))-\partial^k_{\xi}F_1,$$
and
$$F_4^k=v\partial^k_{\xi}(\Pi-\widetilde{\Pi})-\partial^k_{\xi}(v\Pi-\widetilde{v}\widetilde{\Pi})-\partial^k_{\xi}F_2.$$

Multiplying the above equations by $-p^{\prime}(v)\partial_{\xi}^k\Phi,\partial_{\xi}^k\Psi,\frac{1}{\mu}\partial_{\xi}^kQ$, respectively, and integrating over $\mathbb{R}$, we get
\begin{align}\label{new1}
  \frac{\dif}{\dif t}\int_\mathbb{R}\left(-\frac{p^{\prime}(v)}{2}(\partial_{\xi}^k\Phi)^2
  +\frac{1}{2}(\partial_{\xi}^k\Psi)^2
  +\frac{\tau}{2\mu}(\partial_{\xi}^kQ)^2\right)\dif\xi
   +\int_\mathbb{R}\frac{v}{\mu}(\partial_{\xi}^kQ)^2\dif\xi
    =:\sum\limits_{i=1}\limits^{8}R^k_i,
\end{align}
where
\[
\begin{aligned}
&R^k_1=-\int_\mathbb{R}\frac{p^{\prime\prime}(v)}{2}v_t(\partial_{\xi}^k\Phi)^2\dif\xi \quad
,R^k_2=\int_\mathbb{R}\frac{\sigma p^{\prime\prime}(v)}{2}v_{\xi}(\partial_{\xi}^k\Phi)^2\dif\xi,\\
&R^k_3=-\int_\mathbb{R}p^{\prime\prime}(v)v_{\xi}\partial_{\xi}^k\Phi\partial_{\xi}^k\Psi \dif\xi,\quad
R^k_4=-\int_\mathbb{R}\dot{X}(t)\partial_{\xi}^{k+1}(\widetilde{v}^S)^{-X}p^{\prime}(v)\partial_{\xi}^k\Phi \dif\xi,\\
&R^k_5=\int_\mathbb{R}\dot{X}(t)\partial_{\xi}^{k+1}(\widetilde{u}^S)^{-X}\partial_{\xi}^k\Psi \dif\xi,\quad
R^k_6=\int_\mathbb{R}\frac{\tau}{\mu}\dot{X}(t)\partial_{\xi}^{k+1}(\widetilde{\Pi}^S)^{-X}Q\dif\xi,\\
&R^k_7=\int_\mathbb{R}F_3^k\partial_{\xi}^k\Psi \dif\xi,\quad
R^k_8=\int_\mathbb{R}\frac{1}{\mu}F_4^k\partial_{\xi}^kQ\dif\xi.
\end{aligned}
\]
Firstly, by using \eqref{eq41}, Lemmas \ref{Le2} and \ref{Le4}, we have
\begin{equation}\label{81}
\big|(\widetilde{v}_{\xi},\widetilde{u}_{\xi},\widetilde{\Pi})\big|\leq C\delta.
\end{equation}
Using \eqref{eq40}, \eqref{45}, \eqref{81} and Sobolev's imbedding theorem, we have
\begin{align*}
&R^k_1+R^k_2=-\int_\mathbb{R}\frac{p^{\prime\prime}(v)}{2}(v_t-\sigma v_{\xi})(\partial_{\xi}^k\Phi)^2\dif\xi
  =-\int_\mathbb{R}\frac{p^{\prime\prime}(v)}{2}u_{\xi}(\partial_{\xi}^k\Phi)^2\dif\xi\\
 &=-\int_\mathbb{R}\frac{p^{\prime\prime}(v)}{2}(u-\widetilde{u})_{\xi}(\partial_{\xi}^k\Phi)^2\dif\xi
       -\int_\mathbb{R}\frac{p^{\prime\prime}(v)}{2}\widetilde{u}_{\xi}(\partial_{\xi}^k\Phi)^2\dif\xi\leq C(\varepsilon_1+\delta)\int_\mathbb{R}(\partial_{\xi}^k\Phi)^2\dif\xi.
\end{align*}

Similarly, for $R^k_3$, using \eqref{eq40}, \eqref{45}, \eqref{81}, Young's inequality and Sobolev's imbedding theorem, we have
\[
\begin{aligned}
  R_3^k&=-\int_\mathbb{R}p^{\prime\prime}(v)(v-\widetilde{v})_{\xi}\partial_{\xi}^k\Phi\partial_{\xi}^k\Psi \dif\xi
       -\int_\mathbb{R}p^{\prime\prime}(v)\widetilde{v}_{\xi}\partial_{\xi}^k\Phi\partial_{\xi}^k\Psi \dif\xi\\
       &\leq C(\varepsilon_1+\delta)\left(\int_\mathbb{R}(\partial_{\xi}^k\Phi)^2\dif\xi
       +\int_\mathbb{R}(\partial_{\xi}^k\Psi)^2\dif\xi\right).
\end{aligned}
\]
For $R_4^k$, using Lemma \ref{Le2} and Young's inequality, we have
\[
\begin{aligned}
R_4^k&\leq C\delta_S|\dot{X}|^2\int_\mathbb{R}|\partial_{\xi}^{k+1}(\widetilde{v}^S)^{-X}|\dif\xi
   +C\frac{1}{\delta_S}\int_\mathbb{R}|\partial_{\xi}^{k+1}(\widetilde{v}^S)^{-X}|(\partial_{\xi}^k\Phi)^2\dif\xi\\
   &\leq C\delta_S^2|\dot{X}|^2+C\delta_S\int_\mathbb{R}(\partial_{\xi}^k\Phi)^2\dif\xi.
\end{aligned}
\]
Similarly, for $R_5^k,R_6^k$, we have
\[
R_5^k\leq C\delta_S^2|\dot{X}|^2+C\delta_S\int_\mathbb{R}(\partial_{\xi}^k\Psi)^2\dif\xi,\quad
R_6^k\leq C\delta_S^2\tau|\dot{X}|^2+C\delta_S\tau\int_\mathbb{R}(\partial_{\xi}^kQ)^2\dif\xi.
\]
Then, we estimate $R_7^k$. For $k=1$, since
\[
\begin{aligned}
  &p^{\prime}(v)(v-\widetilde{v})_{\xi\xi}-(p(v)-p(\widetilde{v}))_{\xi\xi}\\
   &=-p^{\prime\prime}(v)v^2_{\xi}-p^{\prime}(v)\widetilde{v}_{\xi\xi}
                    +p^{\prime\prime}(\widetilde{v})\widetilde{v}^2_{\xi}
                    +p^{\prime}(\widetilde{v})\widetilde{v}_{\xi\xi}\\
   &=-p^{\prime\prime}(v)(v_{\xi}-\widetilde{v}_{\xi})^2
   -2p^{\prime\prime}(v)(v_{\xi}-\widetilde{v}_{\xi})\widetilde{v}_{\xi}
     -(p^{\prime\prime}(v)-p^{\prime\prime}(\widetilde{v}))\widetilde{v}^2_{\xi}
     -(p^{\prime}(v)-p^{\prime}(\widetilde{v}))\widetilde{v}_{\xi\xi},
\end{aligned}
\]
we have
\[
\begin{aligned}
  \int_\mathbb{R}&(u-\widetilde{u})_{\xi}(p^{\prime}(v)(v-\widetilde{v})_{\xi\xi}-(p(v)-p(\widetilde{v}))_{\xi\xi})\dif\xi\\
      &\leq C\int_\mathbb{R}|(u-\widetilde{u})_{\xi}||(v-\widetilde{v})_{\xi}|^2\dif\xi
      +C\int_\mathbb{R}|(u-\widetilde{u})_{\xi}||(v-\widetilde{v})_{\xi}||\widetilde{v}_{\xi}|\dif\xi\\
      &\quad +C\int_\mathbb{R}|(u-\widetilde{u})_{\xi}|
      |p^{\prime\prime}(v)-p^{\prime\prime}(\widetilde{v})||\widetilde{v}_{\xi}|^2\dif\xi
      +C\int_\mathbb{R}|(u-\widetilde{u})_{\xi}||p^{\prime}(v)-p^{\prime}(\widetilde{v})||\widetilde{v}_{\xi\xi}|\dif\xi.
\end{aligned}
\]
Using \eqref{45}, \eqref{81}, Young's inequality and Sobolev's imbedding theorem, we have
\[
\int_\mathbb{R}|(u-\widetilde{u})_{\xi}||(v-\widetilde{v})_{\xi}|^2\dif\xi\leq
 C\varepsilon_1\int_\mathbb{R}|\partial_{\xi}\Phi|^2\dif\xi.
\]
Similarly, using Lemma \ref{Le2}, Lemma \ref{Le4} and Young's inequality,, we have
\[
\begin{aligned}
\int_\mathbb{R}|(u-\widetilde{u})_{\xi}||(v-\widetilde{v})_{\xi}||\widetilde{v}_{\xi}|\dif\xi\leq C\delta(\int_\mathbb{R}|\partial_{\xi}\Phi|^2\dif\xi+\int_\mathbb{R}|\partial_{\xi}\Psi|^2\dif\xi),
\end{aligned}
\]
and
\begin{align*}
&\int_\mathbb{R}|(u-\widetilde{u})_{\xi}||p^{\prime\prime}(v)
-p^{\prime\prime}(\widetilde{v})||\widetilde{v}_{\xi}|^2\dif\xi\\
&\le C\int_\mathbb{R}|(u-\widetilde{u})_{\xi}|
|p^{\prime\prime}(v)-p^{\prime\prime}(\widetilde{v})||\widetilde{v}^R_{\xi}|^2\dif\xi
+C\int_\mathbb{R}|(u-\widetilde{u})_{\xi}||p^{\prime\prime}(v)-p^{\prime\prime}(\widetilde{v})|
|(\widetilde{v}^S)^{-X}_{\xi}|^2\dif\xi\\
&\le C\delta_R^{\frac{3}{2}}\int_\mathbb{R}|(u-\widetilde{u})_{\xi}| |v-\widetilde{v}||\widetilde{u}^R_{\xi}|^{\frac{1}{2}}\dif\xi+C\delta_S^3\int_\mathbb{R}|(u-\widetilde{u})_{\xi}||(\widetilde{v}^S)^{-X}_{\xi}|^{\frac{1}{2}}|v-\widetilde{v}|\dif\xi\\
&\le C\delta_R^{\frac{3}{2}}\left(\int_\mathbb{R}|\partial_{\xi}\Psi|^2\dif\xi+\mathcal{G}^R(U)\right)+C\delta_S^3\left(\int_\mathbb{R}(\partial_{\xi}\Psi)^2\dif\xi+\mathcal{G}^S(U)\right).
\end{align*}
In a similar way, using Lemma \ref{Le2}, Lemma \ref{Le4} and Young's inequality, we have
\begin{align*}
&\int_\mathbb{R}|(u-\widetilde{u})_{\xi}||p^{\prime}(v)-p^{\prime}(\widetilde{v})||\widetilde{v}_{\xi\xi}|\dif\xi\\
&\leq C\int_\mathbb{R}|(u-\widetilde{u})_{\xi}|
|p^{\prime}(v)-p^{\prime}(\widetilde{v})||\widetilde{v}^R_{\xi\xi}|\dif\xi
+C\int_\mathbb{R}|(u-\widetilde{u})_{\xi}||p^{\prime}(v)-p^{\prime}(\widetilde{v})|
|(\widetilde{v}^S)^{-X}_{\xi\xi}|\dif\xi\\
&\leq C\varepsilon_1\int_\mathbb{R}|(u-\widetilde{u})_{\xi}||v-\widetilde{v}||\widetilde{v}^R_{\xi\xi}|\dif\xi+C\delta_S\int_\mathbb{R}|(u-\widetilde{u})_{\xi}||(\widetilde{v}^S)^{-X}_{\xi}|^{\frac{1}{2}}|v-\widetilde{v}|\dif\xi\\
&\le C\varepsilon_1^2\|\widetilde{v}^R_{\xi\xi}\|_{L^2}+C\delta_S\left(\int_\mathbb{R}(\partial_{\xi}\Psi)^2\dif\xi+\mathcal{G}^S(U)\right).
\end{align*}
So, we derive that
\[
\begin{aligned}
  &\int_\mathbb{R}(\partial_{\xi}\Psi)p^{\prime}(v)(v-\widetilde{v})_{\xi\xi}-(p(v)-p(\widetilde{v}))_{\xi\xi}\dif\xi\\
  &\leq C\delta\int_\mathbb{R}(\partial_{\xi}\Psi)^2\dif\xi
  +C(\varepsilon_1+\delta)\int_\mathbb{R}(\partial_{\xi}\Phi)^2\dif\xi
  +C\delta_S\mathcal{G}^S(U)+C\delta_R^{\frac{3}{2}}\mathcal{G}^R(U)
  +C\varepsilon_1^2\|\widetilde{v}^R_{\xi\xi}\|_{L^2}.
\end{aligned}
\]
From \eqref{79}, using Lemma \ref{Le2}, Lemma \ref{Le4}, we derive that for $k=1,2$,
\begin{equation}\label{82}
\begin{aligned}
&\partial_{\xi}^{k+1}\left(p(\widetilde{v})-p(\widetilde{v}^R)-p((\widetilde{v}^S)^{-X})\right)\\
&\leq C\left(|\widetilde{v}^R_{\xi}||(\widetilde{v}^S)^{-X}-v_m|
  +|(\widetilde{v}^S)^{-X}_{\xi}||\widetilde{v}^R-v_m|
 +|\widetilde{v}^R_{\xi}||(\widetilde{v}^S)^{-X}_{\xi}|\right)
 +C\delta_S\sum\limits_{j=2}\limits^{k+1}|\partial_{\xi}^j\widetilde{v}^R|,
\end{aligned}
\end{equation}
and some tedious calculations give
\begin{align*}
\left(\mu\frac{\widetilde{u}^R_{\xi}}{\widetilde{v}^R}\right)_{\xi\xi}=&\mu\left(\frac{\widetilde{u}^R_{\xi\xi\xi}}{\widetilde{v}^R}
-\frac{2\widetilde{u}^R_{\xi\xi}\widetilde{v}^R_{\xi}}{(\widetilde{v}^R)^2}-\frac{\widetilde{u}^R_{\xi}\widetilde{v}^R_{\xi\xi}}{(\widetilde{v}^R)^2}
+\frac{2\widetilde{u}^R_{\xi}(\widetilde{v}^R_{\xi})^2}{(\widetilde{v}^R)^3}\right),\\
\left(\mu\frac{\widetilde{u}^R_{\xi}}{\widetilde{v}^R}\right)_{\xi\xi\xi}=&\mu\left(\frac{\widetilde{u}^R_{\xi\xi\xi\xi}}{\widetilde{v}^R}
-\frac{3\widetilde{u}^R_{\xi\xi\xi}\widetilde{v}^R_{\xi}+3\widetilde{u}^R_{\xi\xi}\widetilde{v}^R_{\xi\xi}
+\widetilde{u}^R_{\xi}\widetilde{v}^R_{\xi\xi\xi}}{(\widetilde{v}^R)^2}\right.+\frac{6\widetilde{u}^R_{\xi\xi}(\widetilde{v}^R_{\xi})^2+6\widetilde{u}^R_{\xi}\widetilde{v}^R_{\xi\xi}\widetilde{v}^R_{\xi}}{(\widetilde{v}^R)^3}
\left.-\frac{6\widetilde{u}^R_{\xi}(\widetilde{v}^R_{\xi})^3}{(\widetilde{v}^R)^4}\right).
\end{align*}
Then, we have
\[
\begin{aligned}
 & \partial_{\xi}F_1=(p(\widetilde{v})-p(\widetilde{v}^R)-p((\widetilde{v}^S)^{-X}))_{\xi\xi}
 -\left(\mu\frac{\widetilde{u}^R_{\xi}}{\widetilde{v}^R}\right)_{\xi\xi}\\
   &\leq C\left(|\widetilde{v}^R_{\xi}||(\widetilde{v}^S)^{-X}-v_m|
   +|(\widetilde{v}^S)^{-X}_{\xi}||\widetilde{v}^R-v_m|
   +|\widetilde{v}^R_{\xi}||(\widetilde{v}^S)^{-X}_{\xi}|+\sum\limits_{j=2}\limits^{3}|\partial^j_{\xi}\widetilde{u}^R|
   +|\widetilde{u}^R_{\xi}||\widetilde{v}^R_{\xi}|
   +\delta_S|\widetilde{v}^R_{\xi\xi}|\right).
   \end{aligned}
\]
So, using \eqref{45} and H\"{o}lder inequality, we have
\[
\begin{aligned}
  &\int_\mathbb{R}(\partial_{\xi}\Psi)\partial_{\xi}F_1\dif\xi\\
  &\leq C\varepsilon_1\left(\left\||\widetilde{v}^R_{\xi}||(\widetilde{v}^S)^{-X}-v_m|\right\|_{L^2}
   +\left\||(\partial^j_{\xi}\widetilde{v}^S)^{-X}||\widetilde{v}^R-v_m|\right\|_{L^2}\right.
   +\delta_S\|\widetilde{v}^R_{\xi\xi}\|_{L^2}
   +\|\widetilde{u}^R_{\xi\xi}\|_{H^1}\\
    &\left.+\left\||\widetilde{v}^R_{\xi}||(\widetilde{v}^S)^{-X}_{\xi}|\right\|_{L^2}
  +\|\widetilde{u}^R_{\xi}\|_{L^4}\|\widetilde{v}^R_{\xi}\|_{L^4}\right).
  \end{aligned}
\]
Combining the above estimates, we derive that
\[
\begin{aligned}
 &R_7^1\leq
 C(\varepsilon_1+\delta)\int_\mathbb{R}(\partial_{\xi}\Phi)^2\dif\xi
   +C\delta_S\mathcal{G}^S(U)+C\delta_R^{\frac{1}{2}}\mathcal{G}^R(U)
   +C\delta\int_\mathbb{R}(\partial_{\xi}\Psi)^2\dif\xi\\
& +C\varepsilon_1\left(\left\||\widetilde{v}^R_{\xi}||(\widetilde{v}^S)^{-X}-v_m|\right\|_{L^2}
   +\left\||(\partial^j_{\xi}\widetilde{v}^S)^{-X}||\widetilde{v}^R-v_m|\right\|_{L^2}\right)
   +C\|\widetilde{v}^R_{\xi\xi}\|_{L^2}\\
   &+C\varepsilon_1\left(\|\widetilde{u}^R_{\xi\xi}\|_{H^1}
   +\left\||\widetilde{v}^R_{\xi}||(\widetilde{v}^S)^{-X}_{\xi}|\right\|_{L^2}
   +\|\widetilde{u}^R_{\xi}\|_{L^4}\|\widetilde{v}^R_{\xi}\|_{L^4}\right).
\end{aligned}
\]
Similarly, for $k=2$, since
\[
\begin{aligned}
  &p^{\prime}(v)(v-\widetilde{v})_{\xi\xi}-(p(v)-p(\widetilde{v}))_{\xi\xi}\\
   &=-p^{\prime}(v)\widetilde{v}_{\xi\xi\xi}-p^{\prime\prime\prime}(v)v^3_{\xi}-3p^{\prime\prime}(v)v_{\xi}v_{\xi\xi}
                    +p^{\prime\prime\prime}(\widetilde{v})\widetilde{v}^3_{\xi}
                    +3p^{\prime\prime}(\widetilde{v})\widetilde{v}_{\xi\xi}\widetilde{v}_{\xi}
                    +p^{\prime}(\widetilde{v})\widetilde{v}_{\xi\xi\xi}\\
   &=-p^{\prime\prime\prime}(v)(v_{\xi}-\widetilde{v}_{\xi})^3
   -(p^{\prime\prime\prime}(v)-p^{\prime\prime\prime}(\widetilde{v}))\widetilde{v}_{\xi}^3
   -3p^{\prime\prime\prime}(v)\widetilde{v}_{\xi}^2(v_{\xi}-\widetilde{v}_{\xi})
   -3p^{\prime\prime\prime}(v)\widetilde{v}_{\xi}(v_{\xi}-\widetilde{v}_{\xi})^2\\
   &\quad-3p^{\prime\prime}(v)(v_{\xi}-\widetilde{v}_{\xi})(v_{\xi\xi}
   -\widetilde{v}_{\xi\xi})-3p^{\prime\prime}(v)\widetilde{v}_{\xi}(v_{\xi\xi}-\widetilde{v}_{\xi\xi})
  -3p^{\prime\prime}(v)\widetilde{v}_{\xi\xi}(v_{\xi}-\widetilde{v}_{\xi})\\
   &\quad-3(p^{\prime\prime}(v)-p^{\prime\prime}(\widetilde{v}))\widetilde{v}_{\xi}\widetilde{v}_{\xi\xi}-(p^{\prime}(v)-p^{\prime}(\widetilde{v}))\widetilde{v}_{\xi\xi\xi},
\end{aligned}
\]
 we have
\[
\begin{aligned}
&\int_\mathbb{R}(\partial_{\xi\xi}\Psi)(p^{\prime}(v)(v-\widetilde{v})_{\xi\xi}-(p(v)-p(\widetilde{v}))_{\xi\xi})\dif\xi\\
&\leq \delta\mathcal{G}^S(U)
+C(\varepsilon_1+\delta)\left(\|\partial_{\xi}\Phi\|^2_{H^1}
+\int_\mathbb{R}(\partial_{\xi\xi}\Psi)^2\dif\xi\right)
+C\varepsilon_1^2\|\widetilde{v}^R_{\xi\xi}\|_{H^1}.
\end{aligned}
\]
Using \eqref{82}, we have
\[
\begin{aligned}
  &\partial_{\xi\xi}F_1=(p(\widetilde{v})-p(\widetilde{v}^R)
  -p((\widetilde{v}^S)^{-X}))_{\xi\xi\xi}-\left(\mu\frac{\widetilde{u}^R_{\xi}}{\widetilde{v}^R}\right)_{\xi\xi\xi}\leq
  C\left(|\widetilde{v}^R_{\xi\xi\xi}|+|\widetilde{v}^R_{\xi\xi}|
  +|\widetilde{v}^R_{\xi\xi}||\widetilde{u}^R_{\xi\xi}|\right)\\
   & +C\left(|\widetilde{v}^R_{\xi}||(\widetilde{v}^S)^{-X}-v_m|
   +|(\partial^j_{\xi}\widetilde{v}^S)^{-X}||\widetilde{v}^R-v_m|
   +\sum\limits_{j=2}\limits^{4}|\partial^j_{\xi}\widetilde{u}^R|+|\widetilde{v}^R_{\xi}||(\widetilde{v}^S)^{-X}_{\xi}|
   +|\widetilde{u}^R_{\xi}||\widetilde{v}^R_{\xi}|\right).
\end{aligned}
\]
Similarly, we have
\[
\begin{aligned}
  \int_\mathbb{R}(\partial_{\xi\xi}\Psi)\partial_{\xi\xi}F_1\dif\xi
\leq& C\varepsilon_1\left(\left\|\widetilde{v}^R_{\xi}|(\widetilde{v}^S)^{-X}-v_m|\right\|_{L^2}
   +\left\||(\widetilde{v}^S)^{-X}_{\xi}||\widetilde{v}^R-v_m|\right\|_{L^2}
   +\left\||\widetilde{v}^R_{\xi}||(\widetilde{v}^S)^{-X}_{\xi}|\right\|_{L^2}\right)\\
&   +C\left(\|\widetilde{u}^R_{\xi\xi}\|_{H^2}+\|\widetilde{v}^R_{\xi\xi}\|_{H^1}
+\|\widetilde{u}^R_{\xi}\|_{L^4}\|\widetilde{v}^R_{\xi}\|_{L^4}
+\|\widetilde{u}^R_{\xi\xi}\|_{L^4}\|\widetilde{v}^R_{\xi\xi}\|_{L^4}\right).
\end{aligned}
\]
Thus, we get
\[
\begin{aligned}
&R^2_7\leq
C(\varepsilon_1+\delta)\left(\|\partial_{\xi}\Phi\|^2_{H^1}
+\int_\mathbb{R}(\partial_{\xi\xi}\Psi)^2\dif\xi\right)
+C\varepsilon_1\left(\left\|\widetilde{v}^R_{\xi}|(\widetilde{v}^S)^{-X}-v_m|\right\|_{L^2}
   +\left\||(\widetilde{v}^S)^{-X}_{\xi}||\widetilde{v}^R-v_m|\right\|_{L^2}\right.\\
   &+\|\widetilde{u}^R_{\xi}\|_{L^4}\|\widetilde{v}^R_{\xi}\|_{L^4}
   +\|\widetilde{u}^R_{\xi\xi}\|_{H^2}+\|\widetilde{v}^R_{\xi\xi}\|_{H^1}
   +\|\widetilde{u}^R_{\xi\xi}\|_{L^4}\|\widetilde{v}^R_{\xi\xi}\|_{L^4}
   +\left.\left\||\widetilde{v}^R_{\xi}||(\widetilde{v}^S)^{-X}_{\xi}|\right\|_{L^2}\right)
   +\delta\mathcal{G}^S(U).
\end{aligned}
\]
For $R_8^1$, since
\[
v(\Pi-\widetilde{\Pi})_{\xi}-(v\Pi-\widetilde{v}\widetilde{\Pi})_{\xi}=-(v-\widetilde{v})\widetilde{\Pi}_{\xi}
-(v-\widetilde{v})_{\xi}(\Pi-\widetilde{\Pi})
-(v-\widetilde{v})_{\xi}\widetilde{\Pi}-\widetilde{v}_{\xi}(\Pi-\widetilde{\Pi}),
\]
we have
\[
\begin{aligned}
&\int_\mathbb{R}\frac{1}{\mu}(v(\Pi-\widetilde{\Pi})_{\xi}-(v\Pi-\widetilde{v}\widetilde{\Pi})_{\xi})(\Pi_{\xi}
-\widetilde{\Pi}_{\xi})\dif\xi\\
&=-\int_\mathbb{R}\frac{1}{\mu}(v-\widetilde{v})\widetilde{\Pi}_{\xi}(\Pi_{\xi}-\widetilde{\Pi}_{\xi})\dif\xi
-\int_\mathbb{R}\frac{1}{\mu}(v-\widetilde{v})_{\xi}(\Pi-\widetilde{\Pi})(\Pi_{\xi}-\widetilde{\Pi}_{\xi})\dif\xi\\
&\quad-\int_\mathbb{R}\frac{1}{\mu}(v-\widetilde{v})_{\xi}\widetilde{\Pi}(\Pi_{\xi}-\widetilde{\Pi}_{\xi})\dif\xi
-\int_\mathbb{R}\frac{1}{\mu}\widetilde{v}_{\xi}(\Pi-\widetilde{\Pi})(\Pi_{\xi}-\widetilde{\Pi}_{\xi})\dif\xi\\
&=:L_1+L_2+L_3+L_4.
\end{aligned}
\]
Using \eqref{45}, H\"{o}lder inequality and Young's inequality, we have
\[
\begin{aligned}
 L_1&\leq C\int_\mathbb{R}|v-\widetilde{v}||(\widetilde{\Pi}^S)_{\xi}^{-X}||\Pi_{\xi}-\widetilde{\Pi}_{\xi}|\dif\xi
 +C\int_\mathbb{R}|v-\widetilde{v}|\left|\left(\frac{\widetilde{u}^R_{\xi}}{\widetilde{v}^R}\right)_{\xi\xi}\right||\Pi_{\xi}
 -\widetilde{\Pi}_{\xi}|\dif\xi\\
&\leq C\int_\mathbb{R}|v-\widetilde{v}||(\widetilde{v}^S)_{\xi}^{-X}||\Pi_{\xi}-\widetilde{\Pi}_{\xi}|\dif\xi
 +C\varepsilon_1\int_\mathbb{R}(|\widetilde{u}^R_{\xi\xi}|
 +|\widetilde{u}^R_{\xi}||\widetilde{v}^R_{\xi}|)|\Pi_{\xi}-\widetilde{\Pi}_{\xi}|\dif\xi\\
&\leq C(\delta_S+\varepsilon_1)\int_\mathbb{R}\frac{v}{\mu}(\partial_{\xi}Q)^2\dif\xi+C\delta_S\mathcal{G}^S(U)
  +C\varepsilon_1\left(\|\widetilde{u}^R_{\xi\xi}\|^2_{L^2(\mathbb{R})}
  +\|\widetilde{u}^R_{\xi}\|^2_{L^4}\|\widetilde{v}^R_{\xi}\|^2_{L^4}\right),
\end{aligned}
\]
\[
\begin{aligned}
   L_2&\leq C\varepsilon_1\left(\int_\mathbb{R}\frac{v}{\mu}|\Pi-\widetilde{\Pi}|^2\dif\xi
   +\int_\mathbb{R}\frac{v}{\mu}|\Pi_{\xi}-\widetilde{\Pi}_{\xi}|^2\dif\xi\right)
   =C\varepsilon_1\left(G(U)
   +\int_\mathbb{R}\frac{v}{\mu}|\partial_{\xi}Q|^2\dif\xi\right),
\end{aligned}
\]
\[
L_3\leq C\delta\left(\int_\mathbb{R}|\partial_{\xi}\Phi|^2\dif\xi
   +\int_\mathbb{R}\frac{v}{\mu}|\partial_{\xi}Q|^2\dif\xi\right),
\quad
L_4\leq C\delta\left(G(U)+\int_\mathbb{R}\frac{v}{\mu}|\partial_{\xi}Q|^2\dif\xi\right).
\]
Therefore, we get
\[
\begin{aligned}
&\int_\mathbb{R}\frac{1}{\mu}(v(\Pi-\widetilde{\Pi})_{\xi}-(v\Pi-\widetilde{v}\widetilde{\Pi})_{\xi})Q_{\xi}\dif\xi
 \leq C(\delta+\varepsilon_1)\left(G(U)+\int_\mathbb{R}\frac{v}{\mu}|\partial_{\xi}Q|^2\dif\xi\right)\\
 &+C\delta_S\mathcal{G}^S(U)+C\delta\int_\mathbb{R}|\partial_{\xi}\Phi|^2\dif\xi
 +C\varepsilon_1\left(\|\widetilde{u}^R_{\xi\xi}\|^2_{L^2}
  +\|\widetilde{u}^R_{\xi}\|^2_{L^4}\|\widetilde{v}^R_{\xi}\|^2_{L^4}\right).
\end{aligned}
\]
On the other hand, we have
\[
\begin{aligned}
&\left(\mu\frac{\widetilde{u}^R_{\xi}}{\widetilde{v}^R}\right)_{t\xi}=\mu\frac{(\widetilde{u}^R_{\xi\xi t}\widetilde{v}^R
+\widetilde{u}^R_{\xi t}\widetilde{v}^R_{\xi}-\widetilde{u}^R_{\xi\xi}\widetilde{v}^R_{t}
-\widetilde{u}^R_{\xi}\widetilde{v}^R_{t\xi})(\widetilde{v}^R)^2
-(\widetilde{u}^R_{\xi t}\widetilde{v}^R-\widetilde{u}^R_{\xi}\widetilde{v}^R_{t})2\widetilde{v}^R\widetilde{v}^R_{\xi}
}{(\widetilde{v}^R)^4},\\
&\left((\widetilde{v}^R-v_m)(\widetilde{\Pi}^S)^{-X}\right)_{\xi}=\widetilde{v}^R_{\xi}(\widetilde{\Pi}^S)^{-X}
+(\widetilde{v}^R-v_m)(\widetilde{\Pi}^S)^{-X}_{\xi},\\
&\left(((\widetilde{v}^S)^{-X}-v_m)\left(\mu\frac{\widetilde{u}^R_{\xi}}{\widetilde{v}^R}\right)\right)_{\xi}
=((\widetilde{v}^S)^{-X}-v_m)\left(\mu\frac{\widetilde{u}^R_{\xi}}{\widetilde{v}^R}\right)_{\xi}
+(\widetilde{v}^S)^{-X}_{\xi}\left(\mu\frac{\widetilde{u}^R_{\xi}}{\widetilde{v}^R}\right).
\end{aligned}
\]
Using Lemma \ref{Le2} and Lemma \ref{Le4}, we derive that
\[
\begin{aligned}
\partial_{\xi}F_2&\leq C\tau\left(|\widetilde{u}^R_{\xi\xi\xi}|+|\widetilde{u}^R_{\xi\xi}|+|\widetilde{v}^R_{\xi\xi\xi}|+|\widetilde{v}^R_{\xi\xi}|
+|\widetilde{u}^R_{\xi}|^2+|\widetilde{u}^R_{\xi}||\widetilde{v}^R_{\xi}|\right)
+C\delta_S\left(|\widetilde{u}^R_{\xi}||\widetilde{v}^R_{\xi}|+|\widetilde{u}^R_{\xi\xi}|\right)\\
&+C\left(|(\widetilde{v}^S)^{-X}_{\xi}||\widetilde{v}^R-v_m|
+|(\widetilde{v}^S)^{-X}_{\xi}||\widetilde{v}^R_{\xi}|\right)
.
\end{aligned}
\]
Using Young's inequality, we have
\[
\begin{aligned}
&\int_\mathbb{R}\frac{1}{\mu}\partial_{\xi}F_2Q_{\xi}\dif\xi\leq \frac{1}{64}\int_\mathbb{R}\frac{v}{\mu}|\partial_{\xi}Q|^2\dif\xi
+C\left(\|(\widetilde{u}^R_{\xi\xi},\widetilde{v}^R_{\xi\xi})\|^2_{H^1}
+\|\widetilde{u}^R_{\xi}\|^4_{L^4}
+\|\widetilde{u}^R_{\xi}\|^2_{L^4}\|\widetilde{v}^R_{\xi}\|^2_{L^4}\right)\\
&+C\left(\left\||(\widetilde{v}^S)^{-X}_{\xi}||\widetilde{v}^R-v_m|\right\|^2_{L^2(\mathbb{R})}
+\left\||(\widetilde{v}^S)^{-X}_{\xi}||\widetilde{v}^R_{\xi}|\right\|^2_{L^2(\mathbb{R})}\right).
\end{aligned}
\]
Thus, combining the above estimates, we get
\[
\begin{aligned}
&R_8^1\leq
C(\delta+\varepsilon_1)G(U)+\frac{1}{32}\int_\mathbb{R}\frac{v}{\mu}|\partial_{\xi}Q|^2\dif\xi
+C\delta_S\mathcal{G}^S(U)+C\delta\int_\mathbb{R}|\partial_{\xi}\Phi|^2\dif\xi\\
 &+C\left(\|(\widetilde{u}^R_{\xi\xi},\widetilde{v}^R_{\xi\xi})\|^2_{H^1}
+\|\widetilde{u}^R_{\xi}\|^4_{L^4}
+\|\widetilde{u}^R_{\xi}\|^2_{L^4}\|\widetilde{v}^R_{\xi}\|^2_{L^4}+\left\||(\widetilde{v}^S)^{-X}_{\xi}||\widetilde{v}^R-v_m|\right\|^2_{L^2}
+\left\||(\widetilde{v}^S)^{-X}_{\xi}||\widetilde{v}^R_{\xi}|\right\|^2_{L^2}\right).
\end{aligned}
\]
Next, we estimate $R_8^2$.  Firstly, we note that
\[
\begin{aligned}
v(\Pi-\widetilde{\Pi})_{\xi}-(v\Pi-\widetilde{v}\widetilde{\Pi})_{\xi}=&-(v-\widetilde{v})_{\xi\xi}(\Pi-\widetilde{\Pi})
-(v-\widetilde{v})_{\xi\xi}\widetilde{\Pi}-\widetilde{v}_{\xi\xi}(\Pi-\widetilde{\Pi})
-(v-\widetilde{v})\widetilde{\Pi}_{\xi\xi}\\
&-2(v-\widetilde{v})_{\xi}(\Pi-\widetilde{\Pi})_{\xi}-2(v-\widetilde{v})_{\xi}\widetilde{\Pi}_{\xi}
-2\widetilde{v}_{\xi}(\Pi-\widetilde{\Pi})_{\xi}.
\end{aligned}
\]
Using H\"{o}lder inequality, Sobolev's imbedding theorem and Young's inequality, we have
\[
\begin{aligned}
&\int_\mathbb{R}(v-\widetilde{v})_{\xi\xi}(\Pi-\widetilde{\Pi})(\Pi-\widetilde{\Pi})_{\xi\xi}\dif\xi\\
&\leq C
\|(v-\widetilde{v})_{\xi\xi}\|_{L^2}
\left(\int_\mathbb{R}\left(\frac{v}{\mu}\right)^2(\Pi-\widetilde{\Pi})^2\left((\Pi-\widetilde{\Pi})_{\xi\xi}\right)^2\dif\xi\right)^{\frac{1}{2}}\\
&\leq C\varepsilon_1|\sqrt{\frac{v}{\mu}}(\Pi-\widetilde{\Pi})|_{L^{\infty}}
\|\sqrt{\frac{v}{\mu}}(\Pi-\widetilde{\Pi})_{\xi\xi}\|_{L^2}\\
&\leq C\varepsilon_1\left(\|\sqrt{\frac{v}{\mu}}(\Pi-\widetilde{\Pi})\|_{H^1}^2+
\|\sqrt{\frac{v}{\mu}}(\Pi-\widetilde{\Pi})_{\xi\xi}\|_{L^2}^2\right),
\end{aligned}
\]
On the other hand, by the definition of $F_2$, we have
\[
\begin{aligned}
&\partial_{\xi\xi}F_2\leq
C\left(|(\widetilde{v}^S)^{-X}_{\xi}||\widetilde{v}^R-v_m|
+|(\widetilde{v}^S)^{-X}_{\xi}||\widetilde{v}^R_{\xi}|\right)
+\delta_S\left(|\widetilde{u}^R_{\xi\xi\xi}|+
|\widetilde{u}^R_{\xi\xi}|+|\widetilde{u}^R_{\xi}||\widetilde{v}^R_{\xi}|
+|\widetilde{v}^R_{\xi\xi}|\right)\\
&C\tau\left(|\widetilde{u}^R_{\xi\xi\xi\xi}|+
|\widetilde{u}^R_{\xi\xi\xi}|+|\widetilde{u}^R_{\xi\xi}|+|\widetilde{v}^R_{\xi\xi\xi\xi}|
+|\widetilde{v}^R_{\xi\xi\xi}|+|\widetilde{v}^R_{\xi\xi}|
+|\widetilde{u}^R_{\xi\xi}|^2+|\widetilde{v}^R_{\xi\xi}|^2
+|\widetilde{u}^R_{\xi\xi}||\widetilde{v}^R_{\xi\xi}|
+|\widetilde{u}^R_{\xi}|^2+|\widetilde{u}^R_{\xi}||\widetilde{v}^R_{\xi}|\right).
\end{aligned}
\]
Combining the above estimates, and using similar methods as in the estimates of $R_8^1$, we derive that
\[
\begin{aligned}
&R_8^2\leq
C(\delta+\varepsilon_1)\left(G(U)+\int_\mathbb{R}\frac{v}{\mu}|\partial_{\xi}Q|^2\dif\xi\right)
+\frac{1}{32}\int_\mathbb{R}\frac{v}{\mu}|\partial_{\xi\xi}Q|^2\dif\xi
 +C\delta_S\mathcal{G}^S(U)+C\delta\|\partial_{\xi}\Phi\|_{H^1}^2\\
&+C\left(\|(\widetilde{u}^R_{\xi\xi},\widetilde{v}^R_{\xi\xi})\|^2_{H^2}
+\|\widetilde{u}^R_{\xi\xi}\|^4_{L^4}+\|\widetilde{v}^R_{\xi\xi}\|^4_{L^4}
+\|\widetilde{u}^R_{\xi}\|^4_{L^4}
+\|\widetilde{u}^R_{\xi}\|^2_{L^4}\|\widetilde{v}^R_{\xi}\|^2_{L^4}
+\|\widetilde{u}^R_{\xi\xi}\|^2_{L^4}\|\widetilde{v}^R_{\xi\xi}\|^2_{L^4}\right)\\
&+C\left(\left\||(\widetilde{v}^S)^{-X}_{\xi}||\widetilde{v}^R-v_m|\right\|^2_{L^2}
+\left\||(\widetilde{v}^S)^{-X}_{\xi}||\widetilde{v}^R_{\xi}|\right\|^2_{L^2}\right).
\end{aligned}
\]
Finally, integrating the equality \eqref{new1}  over $[0,t]$,  using the above estimates  and recalling Lemma \ref{Le4}, Lemma \ref{Le9}, we complete the proof of this lemma.
\end{proof}

\subsection{Dissipative estimates}

In the following lemmas, we give the dissipative estimates of given solutions to system \eqref{50}.
\begin{lemma}\label{Le11}
There exists a constant $C,C_1,\epsilon>0$ such that for $0\leq t\leq T$, we have
\[
\begin{aligned}
&\int_0^t\|\partial_{\xi}\Phi\|_{H^1}^2\dif t\leq\epsilon\|\Psi\|_{H^1}^2
+C(\epsilon)\|\partial_{\xi}\Phi\|_{H^1}^2
     +C\delta_S^2\int_0^t|\dot{X}(t)|^2\dif t+C\int_0^t\|\partial_{\xi}Q\|_{H^1}^2\dif t\\
     &+C\left(\|\Psi_0\|_{H^1}^2+\|\partial_{\xi}\Phi_0\|_{H^1}^2\right)
     +C_1\int_0^t\|\partial_{\xi}\Psi\|_{H^1}^2\dif t
     +C\int_0^t\left(\delta_S \mathcal{G}^S(U)
     +\delta_R^{\frac{1}{2}}\mathcal{G}^R(U)\right)\dif t+C\delta_R^{\theta},
\end{aligned}
\]
where $\theta=\min\{\frac{1}{2}, \frac{3}{2}-\frac{1}{q}\}$.
\end{lemma}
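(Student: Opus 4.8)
The plan is to recover the missing diffusive control of $\partial_\xi\Phi$ and $\partial_{\xi\xi}\Phi$ from the momentum equation, using the continuity equation to trade the time derivative of $\Psi$ for spatial derivatives. For $k=0,1$ I would multiply the second equation of \eqref{80} by $\partial_\xi^{k+1}\Phi$ and integrate over $\mathbb{R}$. Since $p^{\prime}(v)<0$ and, by Theorem \ref{th3.1}, $v(t,\cdot)$ stays in a fixed compact subset of $(0,\infty)$ for $t\in[0,T]$, one has $-\int_{\mathbb{R}}p^{\prime}(v)(\partial_\xi^{k+1}\Phi)^2\,\dif\xi\ge c\|\partial_\xi^{k+1}\Phi\|_{L^2}^2$, so that
\begin{align*}
c\|\partial_\xi^{k+1}\Phi\|_{L^2}^2\le{}&\int_{\mathbb{R}}(\partial_t\partial_\xi^k\Psi)\,\partial_\xi^{k+1}\Phi\,\dif\xi-\sigma\int_{\mathbb{R}}(\partial_\xi^{k+1}\Psi)\,\partial_\xi^{k+1}\Phi\,\dif\xi\\
&-\int_{\mathbb{R}}\bigl(\partial_\xi^{k+1}Q+F_3^k\bigr)\,\partial_\xi^{k+1}\Phi\,\dif\xi-\dot{X}(t)\int_{\mathbb{R}}\partial_\xi^{k+1}(\widetilde{u}^S)^{-X}\,\partial_\xi^{k+1}\Phi\,\dif\xi.
\end{align*}

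The delicate term is $\int_{\mathbb{R}}(\partial_t\partial_\xi^k\Psi)\,\partial_\xi^{k+1}\Phi\,\dif\xi$, which cannot be integrated by parts in time. I would instead write it as $\frac{\dif}{\dif t}\int_{\mathbb{R}}(\partial_\xi^k\Psi)(\partial_\xi^{k+1}\Phi)\,\dif\xi-\int_{\mathbb{R}}(\partial_\xi^k\Psi)\,\partial_t\partial_\xi^{k+1}\Phi\,\dif\xi$, then substitute $\partial_t\partial_\xi^{k+1}\Phi=\sigma\partial_\xi^{k+2}\Phi+\dot{X}(t)\partial_\xi^{k+2}(\widetilde{v}^S)^{-X}+\partial_\xi^{k+2}\Psi$ from the first equation of \eqref{80} and integrate by parts in $\xi$ so that at most one derivative lands on each factor. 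The two occurrences of $\sigma\int_{\mathbb{R}}(\partial_\xi^{k+1}\Psi)(\partial_\xi^{k+1}\Phi)\,\dif\xi$ then cancel, leaving $\frac{\dif}{\dif t}\int_{\mathbb{R}}(\partial_\xi^k\Psi)(\partial_\xi^{k+1}\Phi)\,\dif\xi+\int_{\mathbb{R}}(\partial_\xi^{k+1}\Psi)^2\,\dif\xi+\dot{X}(t)\int_{\mathbb{R}}(\partial_\xi^{k+1}\Psi)\,\partial_\xi^{k+1}(\widetilde{v}^S)^{-X}\,\dif\xi$. Inserting this back and integrating over $[0,t]$, the time–boundary term $\int_{\mathbb{R}}(\partial_\xi^k\Psi)(\partial_\xi^{k+1}\Phi)\,\dif\xi\big|_{0}^{t}$ is estimated, after summing $k=0,1$, by Young's inequality as $\epsilon\|\Psi\|_{H^1}^2+C(\epsilon)\|\partial_\xi\Phi\|_{H^1}^2+C(\|\Psi_0\|_{H^1}^2+\|\partial_\xi\Phi_0\|_{H^1}^2)$, while $\int_0^t\int_{\mathbb{R}}(\partial_\xi^{k+1}\Psi)^2\,\dif\xi\,\dif t$ produces exactly the (not necessarily small) constant $C_1$ in front of $\int_0^t\|\partial_\xi\Psi\|_{H^1}^2\,\dif t$.

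The remaining terms are handled routinely. For $\int_0^t\int_{\mathbb{R}}(\partial_\xi^{k+1}Q)\,\partial_\xi^{k+1}\Phi\,\dif\xi\,\dif t$, Young's inequality gives a small multiple of $\int_0^t\|\partial_\xi^{k+1}\Phi\|_{L^2}^2\,\dif t$, absorbed on the left, plus $C\int_0^t\|\partial_\xi Q\|_{H^1}^2\,\dif t$. The terms carrying $\dot{X}(t)$, i.e. those with $\partial_\xi^{k+1}(\widetilde{u}^S)^{-X}$ and $\partial_\xi^{k+1}(\widetilde{v}^S)^{-X}$, are bounded using the exponential decay of Lemma \ref{Le2} (so that $\|\partial_\xi^{k+1}(\widetilde{u}^S)^{-X}\|_{L^2}+\|\partial_\xi^{k+1}(\widetilde{v}^S)^{-X}\|_{L^2}\le C\delta_S^{3/2}$) and Young's inequality by $C\delta_S^2|\dot{X}(t)|^2$ plus small multiples of $\|\partial_\xi^{k+1}\Phi\|_{L^2}^2$ and $\|\partial_\xi^{k+1}\Psi\|_{L^2}^2$. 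Finally, writing $F_3^k=\bigl(p^{\prime}(v)\partial_\xi^{k+1}\Phi-\partial_\xi^{k+1}(p(v)-p(\widetilde{v}))\bigr)-\partial_\xi^k F_1$, the first bracket expands into terms each carrying a factor $\widetilde{v}_\xi=\widetilde{v}^R_\xi+(\widetilde{v}^S)_\xi^{-X}$ or a factor $\partial_\xi\Phi$; by Cauchy–Schwarz, together with $|\widetilde{v}_\xi|\le C\delta$, $\|\partial_\xi\Phi\|_{H^1}\le\varepsilon_1$ and elementary absorption inequalities for $(\widetilde{v}^S)_\xi^{-X}$ and $\widetilde{u}^R_\xi$, its contribution is dominated by $C\delta_S\mathcal{G}^S(U)+C\delta_R^{1/2}\mathcal{G}^R(U)$ plus $(\delta+\varepsilon_1)$-small multiples of $\int_0^t(\|\partial_\xi\Phi\|_{H^1}^2+\|\partial_\xi\Psi\|_{H^1}^2)\,\dif t$, all absorbed on the left or into $C_1\int_0^t\|\partial_\xi\Psi\|_{H^1}^2\,\dif t$. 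The wave–interaction part $\int_0^t\int_{\mathbb{R}}(\partial_\xi^k F_1)\,\partial_\xi^{k+1}\Phi\,\dif\xi\,\dif t$ is estimated exactly as the $R_7^k$ terms in Lemma \ref{Le10}, using \eqref{79}, \eqref{82} and Lemmas \ref{Le2}, \ref{Le4}, \ref{Le9} together with the choice $\epsilon=\delta_R^3$, and yields $C\delta_R^\theta$. Summing over $k=0,1$ and absorbing the small terms gives the claim.

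The main obstacle is precisely the treatment of $\int_{\mathbb{R}}(\partial_t\partial_\xi^k\Psi)\,\partial_\xi^{k+1}\Phi\,\dif\xi$: since there is no parabolic smoothing and no integration by parts in time is available, it must be reorganized through the exact–time–derivative identity above, and the resulting dependence of the right-hand side on $\int_0^t\|\partial_\xi\Psi\|_{H^1}^2\,\dif t$ through the non-small constant $C_1$ is unavoidable at this stage; it will be closed only later by combining this estimate with the high-order energy estimate of Lemma \ref{Le10} and a further dissipative estimate for $\partial_\xi\Psi$. A secondary difficulty is that, the anti-derivative–type change of variables of \cite{WY} being unavailable for \eqref{eq1}, the whole argument has to be carried out directly on $\partial_\xi\Phi$ and $\partial_{\xi\xi}\Phi$, which is exactly why the $H^2$ (rather than $H^1$) regularity of the solution is needed.
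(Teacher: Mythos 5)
Your proposal is correct and follows essentially the same route as the paper: multiply $\eqref{80}_2$ by $\partial_\xi^{k+1}\Phi$ ($k=0,1$), handle the term $\int\partial_t\partial_\xi^k\Psi\,\partial_\xi^{k+1}\Phi\,\dif\xi$ by the product rule together with $\eqref{80}_1$ so that the $\sigma$-cross terms cancel, absorb the $Q$, $\dot X$ and boundary terms via Young's inequality, and estimate $F_3^k$ exactly as the $R_7^k$ terms of Lemma \ref{Le10}. The only quibble is terminological: what you describe as avoiding integration by parts in time (the exact time-derivative identity plus integration over $[0,t]$) is precisely the integration by parts in time that the paper performs, so there is no difference in substance.
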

\begin{proof}
Multiplying the equation $\eqref{80}_2$ by $\partial_{\xi}^{k+1}\Phi$ for $k=0 , 1,$ and integrating over $(0,t)\times \mathbb{R}$, we get
\[
\int_0^t\int_{\mathbb{R}}-p^{\prime}(v)\left(\partial_{\xi}^{k+1}\Phi\right)^2\dif\xi \dif t=:\sum\limits_{i=0}\limits^{5}M_i^k,
\]
where
\[
M_1^k=\int_0^t\int_\mathbb{R}\partial_{t}\partial_{\xi}^k\Psi\partial_{\xi}^{k+1}\Phi \dif\xi \dif t,
\quad M_2^k=-\int_0^t\int_\mathbb{R}\sigma\partial_{\xi}^{k+1}\Psi\partial_{\xi}^{k+1}\Phi \dif\xi \dif t,
\]
\[
M_3^k=-\int_0^t\int_\mathbb{R}\dot{X}(t)\partial_{\xi}^{k+1}(\widetilde{u}^S)^{-X}\partial_{\xi}^{k+1}\Phi \dif\xi \dif t,
\quad M_4^k=-\int_0^t\int_\mathbb{R}\partial_{\xi}^{k+1}Q\partial_{\xi}^{k+1}\Phi \dif\xi \dif t,
\]
\[
M_5^k=-\int_0^t\int_\mathbb{R}F_3^k\partial_{\xi}^{k+1}\Phi \dif\xi \dif t.
\]
Firstly, doing integration by part and using equation $\eqref{80}_1$, we get
\[
\begin{aligned}
&M_1^k+M_2^k\\
&=\int_\mathbb{R}\left(\partial_{\xi}^k\Psi(t)\partial_{\xi}^{k+1}\Phi(t) -\partial_{\xi}^k\Psi_0\partial_{\xi}^{k+1}\Phi_0 \right)\dif\xi-\int_0^t\int_\mathbb{R}\partial_{\xi}^k\Psi
      \left(\dot{X}(t)\partial_{\xi}^{k+2}(\widetilde{v}^S)^{-X}
      +\partial_{\xi}^{k+2}\Psi\right)\dif\xi \dif t.
\end{aligned}
\]
Since
\[
\begin{aligned}
 -\int_0^t\int_\mathbb{R}\partial_{\xi}^k\Psi
      \dot{X}(t)\partial_{\xi}^{k+2}(\widetilde{v}^S)^{-X}\dif\xi \dif t&=\int_0^t\int_\mathbb{R}\partial_{\xi}^{k+1}\Psi
      \dot{X}(t)\partial_{\xi}^{k+1}(\widetilde{v}^S)^{-X}\dif\xi \dif t\\
   &\leq C\delta_S^2\int_0^t|\dot{X}(t)|^2\dif t+C\delta_S\int_0^t\int_\mathbb{R}\left(\partial_{\xi}^{k+1}\Psi\right)^2\dif\xi \dif t,
\end{aligned}
\]
and
\[
-\int_0^t\int_\mathbb{R}\partial_{\xi}^k\Psi
      \partial_{\xi}^{k+2}\Psi \dif\xi\dif t
      =\int_0^t\int_\mathbb{R}\left(\partial_{\xi}^{k+1}\Psi\right)^2\dif\xi \dif t,
\]
which gives
\[
\begin{aligned}
M_1^k+M_2^k\leq&\epsilon\int_\mathbb{R}\left(\partial_{\xi}^k\Psi(t)\right)^2\dif\xi
+C(\epsilon)\int_\mathbb{R}\left(\partial_{\xi}^{k+1}\Phi(t)\right)^2\dif\xi
+C\int_0^t\int_\mathbb{R}\left(\partial_{\xi}^{k+1}\Psi\right)^2\dif\xi \dif t\\
&+C\left(\delta_S^2\int_0^t|\dot{X}(t)|^2\dif t+\int_\mathbb{R}\left(\partial_{\xi}^k\Psi_0\right)^2\dif\xi
+\int_\mathbb{R}\left(\partial_{\xi}^{k+1}\Phi_0\right)^2\dif\xi\right).
\end{aligned}
\]
Secondly, for $M_3^k$ and $M_4^k$, using Young's inequality, we have
\[
M_3^k\leq C\delta_S^2\int_0^t|\dot{X}(t)|^2\dif t+C\delta_S\int_0^t\int_\mathbb{R}\left(\partial_{\xi}^{k+1}\Phi\right)^2\dif\xi \dif t,
\]
and
\[
M_4^k\leq \frac{1}{2}\int_0^t\int_{\mathbb{R}}-p^{\prime}(v)\left(\partial_{\xi}^{k+1}\Phi\right)^2\dif\xi \dif t
   +C\int_0^t\int_\mathbb{R}\left(\partial_{\xi}^{k+1}Q\right)^2\dif\xi \dif t.
\]
Next, we estimate $M_5^k$. For $k=0$,  we have
\[
F_3^0=-(p^{\prime}(v)-p^{\prime}(\widetilde{v}))\widetilde{v}_{\xi}-\left(p(\widetilde{v})-p(\widetilde{v}^R)-p((\widetilde{v}^S)^{-X})\right)_{\xi}
-\mu\left(\frac{\widetilde{u}^R_{\xi}}{\widetilde{v}^R}\right)_{\xi},
\]
which gives
\[
M_5^0\leq C\delta^{\frac{1}{2}}\int_0^t\int_{\mathbb{R}}\left(\partial_{\xi}\Phi\right)^2\dif\xi \dif t
+C\delta_S\int_0^t\mathcal{G}^S(U)\dif t+C\delta_R^{\frac{1}{2}}\int_0^t\mathcal{G}^R(U)\dif t
+C\delta_R^{\theta}.
\]
For $k=1$, the term $F_3^1$ is the same as in Lemma \ref{Le10}.  Therefore, using similar estimates as in Lemma \ref{Le10}, we have
\[
\begin{aligned}
M_5^1\leq &C(\delta+\varepsilon_1)\int_0^t\|\partial_\xi \Phi\|_{H^1}^2 \dif t+C\delta_S\int_0^t\mathcal{G}^S(U)\dif t+C\delta_R^{\frac{1}{2}}\int_0^t\mathcal{G}^R(U)\dif t
   +C\delta_R^{\theta}.
\end{aligned}
\]
Therefore, combining the above estimates, we get the desired results.
\end{proof}
\begin{lemma}\label{Le12}
There exists a constant $C,\epsilon_1>0$ such that for $0\leq t\leq T$, we have
\[
\begin{aligned}
&\int_0^t\|\partial_{\xi}\Psi\|_{H^1}^2\dif t\leq\epsilon_1\tau\|Q\|_{H^1}^2
+C(\epsilon_1)\|\partial_{\xi}\Psi\|_{H^1}^2
     +C\delta_S^2\int_0^t|\dot{X}(t)|^2\dif t
     +C\left(\|Q_0\|_{H^1}^2+\|\partial_{\xi}\Psi_0\|_{H^1}^2\right)\\
     &+\frac{1}{2C_1}\int_0^t\|\partial_{\xi}\Phi\|_{H^1}^2\dif t
     +C\int_0^t\|\partial_{\xi}Q\|_{H^1}^2\dif t+C\delta_S\int_0^t\mathcal{G}^S(U)\dif t+C\delta_R^{\frac{1}{2}}\int_0^t\mathcal{G}^R(U)\dif t
     +C\delta_R^{\theta},
\end{aligned}
\]
where $C_1$ is the constant in Lemma \ref{Le11} and $\theta=\min\{\frac{1}{2}, \frac{3}{2}-\frac{1}{q}\}$.
\end{lemma}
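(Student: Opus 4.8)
I will mimic the proof of Lemma~\ref{Le11}, but now use the relaxation equation $\eqref{80}_3$, which carries $\mu\,\partial_\xi^{k+1}\Psi$ on its right-hand side, to extract dissipation for $\partial_\xi\Psi$. For $k=0,1$ I solve $\eqref{80}_3$ for $\mu\,\partial_\xi^{k+1}\Psi=\tau\partial_t\partial_\xi^kQ-\sigma\tau\partial_\xi^{k+1}Q-\tau\dot X\,\partial_\xi^{k+1}(\widetilde\Pi^S)^{-X}+v\,\partial_\xi^kQ-F_4^k$, multiply by $\partial_\xi^{k+1}\Psi$, and integrate over $(0,t)\times\mathbb R$. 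The left side yields the good term $\mu\int_0^t\|\partial_\xi^{k+1}\Psi\|_{L^2}^2\,\dif t$, which after summing over $k$ is a multiple of $\int_0^t\|\partial_\xi\Psi\|_{H^1}^2\,\dif t$; the right side is $N_1^k+\dots+N_5^k$, with $N_1^k$ from $\tau\partial_t\partial_\xi^kQ$, $N_2^k$ from $-\sigma\tau\partial_\xi^{k+1}Q$, $N_3^k$ from the shift term, $N_4^k$ from the damping $v\,\partial_\xi^kQ$, and $N_5^k$ from $-F_4^k$.

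\textbf{The term $N_1^k$.} Integrating by parts in $t$ splits $N_1^k$ into the boundary contribution $\tau\bigl[\int_{\mathbb R}\partial_\xi^kQ\,\partial_\xi^{k+1}\Psi\bigr]_0^t$, bounded by Young's inequality (and $\tau\le1$) by $\epsilon_1\tau\|Q\|_{H^1}^2+C(\epsilon_1)\|\partial_\xi\Psi\|_{H^1}^2+C(\|Q_0\|_{H^1}^2+\|\partial_\xi\Psi_0\|_{H^1}^2)$, plus $-\tau\int_0^t\int_{\mathbb R}\partial_\xi^kQ\,\partial_t\partial_\xi^{k+1}\Psi$, which after one integration by parts in $\xi$ equals $\tau\int_0^t\int_{\mathbb R}\partial_\xi^{k+1}Q\,\partial_t\partial_\xi^k\Psi$. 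Substituting $\partial_t\partial_\xi^k\Psi$ from $\eqref{80}_2$, every resulting term carries an extra factor $\tau\le1$: the $\partial_\xi^{k+1}Q\cdot\partial_\xi^{k+1}\Psi$ and $\partial_\xi^{k+1}Q\cdot\partial_\xi^{k+1}Q$ pieces go into a small multiple of the good term plus $C\int_0^t\|\partial_\xi Q\|_{H^1}^2$; the $\partial_\xi^{k+1}Q\cdot p'(v)\partial_\xi^{k+1}\Phi$ piece, by a weighted Young, into $\tfrac1{4C_1}\int_0^t\|\partial_\xi\Phi\|_{H^1}^2+C\int_0^t\|\partial_\xi Q\|_{H^1}^2$; the shift piece, using $\|\partial_\xi^{k+1}(\widetilde u^S)^{-X}\|_{L^2}\le C\delta_S^{3/2}$ (Lemma~\ref{Le2}) together with Cauchy--Schwarz and Young, into $C\delta_S^2\int_0^t|\dot X|^2+C\int_0^t\|\partial_\xi Q\|_{H^1}^2$; and the $\partial_\xi^{k+1}Q\cdot F_3^k$ piece, estimated exactly as the $R_7^k$ terms in Lemma~\ref{Le10} (Lemmas~\ref{Le2},~\ref{Le4} and \eqref{79}), into $C\delta_S\int_0^t\mathcal G^S(U)+C\delta_R^{1/2}\int_0^t\mathcal G^R(U)+C(\delta+\varepsilon_1)\int_0^t\|\partial_\xi\Phi\|_{H^1}^2+C\delta_R^\theta$.

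\textbf{The terms $N_2^k,N_3^k,N_4^k,N_5^k$.} By Young's inequality (using $\tau^2\le1$), $N_2^k\le\tfrac\mu{16}\int_0^t\|\partial_\xi\Psi\|_{H^1}^2+C\int_0^t\|\partial_\xi Q\|_{H^1}^2$; by Cauchy--Schwarz and Young with the profile bound $\|\partial_\xi^{k+1}(\widetilde\Pi^S)^{-X}\|_{L^2}\le C\delta_S^{3/2}$, $N_3^k\le\tfrac\mu{16}\int_0^t\|\partial_\xi\Psi\|_{H^1}^2+C\delta_S^2\int_0^t|\dot X|^2$. For $N_5^k$ I use that $F_4^0=-\widetilde\Pi\,\Phi-F_2$ (with a harmless commutator added when $k=1$) together with $|\widetilde\Pi|\le C(|\widetilde v^S_\xi|+|\widetilde u^R_\xi|)$ (from $|\widetilde\Pi^S|\sim|\widetilde v^S_\xi|$ and Lemma~\ref{Le4}), so that the same manipulations as for the $R_8^k$ terms in Lemma~\ref{Le10} give $N_5^k\le\tfrac\mu{16}\int_0^t\|\partial_\xi\Psi\|_{H^1}^2+C\delta_S\int_0^t\mathcal G^S(U)+C\delta\int_0^t\|\partial_\xi\Phi\|_{H^1}^2+C\int_0^t\|\partial_\xi Q\|_{H^1}^2+C\delta_R^\theta$, the profile-only contributions of $F_2$ being time-integrable with the power $\delta_R^\theta$, $\theta=\min\{\tfrac12,\tfrac32-\tfrac1q\}$, as in the proof of Lemma~\ref{Le5}. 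For $N_4^k$ with $k=1$, Young's inequality directly gives $\tfrac\mu{16}\int_0^t\|\partial_\xi\Psi\|_{H^1}^2+C\int_0^t\|\partial_\xi Q\|_{H^1}^2$.

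\textbf{The delicate term and the main obstacle.} The one genuinely new point is the $k=0$ part of $N_4$, namely $\int_0^t\int_{\mathbb R}vQ\,\partial_\xi\Psi$: Young's inequality only brings it down to $\tfrac\mu{16}\int_0^t\|\partial_\xi\Psi\|_{L^2}^2+C\int_0^t\|Q\|_{L^2}^2$, and $\int_0^t\|Q\|_{L^2}^2$ is \emph{not} controlled by the $Q$-dissipation $\int_0^t\|\partial_\xi Q\|_{H^1}^2$ alone (there is no Poincar\'e inequality on $\mathbb R$). It is instead absorbed through the relative-entropy estimate, since $\|Q\|_{L^2}^2\le\tfrac{\mu}{\inf v}\,G(U)$ and $\int_0^tG(U)\,\dif t$ is produced on the left-hand side of Lemma~\ref{Le5}; thus this term enters the final bookkeeping as a multiple of a dissipative quantity already supplied by the $L^2$ estimate. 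It then remains to integrate the identity over $[0,t]$, absorb the several $\tfrac\mu{16}\int_0^t\|\partial_\xi\Psi\|_{H^1}^2$ pieces into the good term, and choose $\delta_S,\delta_R,\varepsilon_1$ small \emph{after} the constant $C_1$ of Lemma~\ref{Le11} is fixed, so that the total coefficient of $\int_0^t\|\partial_\xi\Phi\|_{H^1}^2$ (coming from the $p'(v)$, $F_3^k$ and $F_4^k$ couplings) does not exceed $\tfrac1{2C_1}$, which yields the asserted inequality. The real difficulty is structural rather than computational: the relaxed system dissipates only $\Pi$, not $u$, so there is no self-contained estimate for $\int_0^t\|\partial_\xi\Psi\|_{H^1}^2$ — it unavoidably couples to $\int_0^t\|\partial_\xi\Phi\|_{H^1}^2$ and to $\int_0^t\|Q\|_{L^2}^2$, and Lemma~\ref{Le12} only makes sense in tandem with Lemmas~\ref{Le5} and~\ref{Le11}; the matching of the weights $C_1$ and $\tfrac1{2C_1}$ is exactly what lets this coupled system of inequalities close.
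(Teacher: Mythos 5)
Your proposal is correct and follows essentially the paper's own proof: multiplying $\eqref{80}_3$ by $\partial_\xi^{k+1}\Psi$ for $k=0,1$, splitting the right side into the same terms $N_1^k,\dots,N_5^k$, integrating $N_1^k$ by parts in time and substituting the momentum equation, and estimating the remaining terms with the profile bounds of Lemmas \ref{Le2}, \ref{Le4} and the $F$-term estimates recycled from Lemma \ref{Le10}, with the same $\frac{1}{4C_1}$-weighted Young step that makes the coefficient of $\int_0^t\|\partial_\xi\Phi\|_{H^1}^2\dif t$ compatible with Lemma \ref{Le11}. The only deviations are harmless: you estimate the $\tau\sigma$ cross term by Young instead of cancelling it against $N_2^k$, and you are in fact more explicit than the paper about the $k=0$ part of $N_4$, whose $\int_0^t\|Q\|_{L^2}^2\dif t$ contribution the paper likewise leaves to be absorbed through the $G(U)$ dissipation of Lemma \ref{Le5} in the final assembly of Proposition \ref{p1}.
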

\begin{proof}
Multiplying the equation $\eqref{80}_3$ by $\partial_{\xi}^{k+1}\Psi$ for $k=0 , 1$, and integrating over $(0,t)\times\mathbb{R}$, we get
\[
\int_0^t\int_{\mathbb{R}}\mu\left(\partial_{\xi}^{k+1}\Psi\right)^2\dif\xi \dif t=:\sum\limits_{i=0}\limits^{5}N_i^k,
\]
where
\[
N_1^k=\int_0^t\int_\mathbb{R}\tau\partial_{t}\partial_{\xi}^kQ\partial_{\xi}^{k+1}\Psi \dif\xi \dif t,
\quad N_2^k=-\int_0^t\int_\mathbb{R}\sigma\tau\partial_{\xi}^{k+1}Q\partial_{\xi}^{k+1}\Psi \dif\xi \dif t,
\]
\[
N_3^k=-\int_0^t\int_\mathbb{R}\tau\dot{X}(t)\partial_{\xi}^{k+1}(\widetilde{\Pi}^S)^{-X}\partial_{\xi}^{k+1}\Psi \dif\xi \dif t,
\quad N_4^k=\int_0^t\int_\mathbb{R}v\partial_{\xi}^{k}Q\partial_{\xi}^{k+1}\Psi \dif\xi \dif t,
\]
\[
N_5^k=-\int_0^t\int_\mathbb{R}F_4^k\partial_{\xi}^{k+1}\Psi \dif\xi \dif t.
\]
Firstly, doing integration by part and using equation $\eqref{50}_2$, we get
\[
\begin{aligned}
N_1^k+N_2^k=&\tau\int_\mathbb{R}\partial_{\xi}^kQ(t)\partial_{\xi}^{k+1}\Psi(t) \dif\xi-\tau\int_\mathbb{R}\partial_{\xi}^kQ_0\partial_{\xi}^{k+1}\Psi_0 \dif\xi\\
      &-\tau\int_0^t\int_\mathbb{R}\partial_{\xi}^kQ
      \left(\dot{X}(t)\partial_{\xi}^{k+2}(\widetilde{u}^S)^{-X}
          -\partial_{\xi}^{k+2}(p(v)-p(\widetilde{v}))+\partial_{\xi}^{k+2}Q+\partial^{k+2}_{\xi}F_1\right)\dif\xi \dif t,
\end{aligned}
\]
Using Young's inequality, we get
\[
\begin{aligned}
& -\tau\int_0^t\int_\mathbb{R}\partial_{\xi}^kQ
      \dot{X}(t)\partial_{\xi}^{k+2}(\widetilde{u}^S)^{-X}\dif\xi \dif t\\
      &=\tau\int_0^t\int_\mathbb{R}\partial_{\xi}^{k+1}Q
      \dot{X}(t)\partial_{\xi}^{k+1}(\widetilde{u}^S)^{-X}\dif\xi \dif t\leq C\delta_S^2\tau^2\int_0^t|\dot{X}|^2dt
   +C\delta_S\int_0^t\int_\mathbb{R}\left(\partial_{\xi}^{k+1}Q\right)^2\dif\xi \dif t.
\end{aligned}
\]
Doing integration by part, using Young's inequality, we get
\[
\begin{aligned}
&\tau\int_0^t\int_\mathbb{R}\partial_{\xi}^kQ\partial_{\xi}^{k+2}(p(v)-p(\widetilde{v}))\dif\xi \dif t\\
&=-\tau\int_0^t\int_\mathbb{R}\partial_{\xi}^{k+1}Q\partial_{\xi}^{k+1}(p(v)-p(\widetilde{v}))\dif\xi \dif t\\
&\leq \frac{\mu}{4C_2}\int_0^t\int_\mathbb{R}\left(\partial_{\xi}^{k+1}\Phi\right)^2\dif\xi \dif t
+C\tau^2\int_0^t\int_\mathbb{R}\left(\partial_{\xi}^{k+1}Q\right)^2\dif\xi \dif t
+C\delta_S\int_0^t\mathcal{G}^S(U)\dif t+C\delta_R^{\frac{1}{2}}\int_0^t\mathcal{G}^R(U)\dif t.
\end{aligned}
\]
Recalling the estimates of $\partial_\xi^{k+1} F_1, k=0, 1$ in Lemma \ref{Le10}, we get
\begin{align*}
-\tau\int_0^t\int_\mathbb{R}\partial_{\xi}^kQ
      \partial_{\xi}^{k+2}F_1 \dif\xi \dif t&=\tau\int_0^t\int_\mathbb{R}\partial_{\xi}^{k+1}Q
      \partial_{\xi}^{k+1}F_1 \dif\xi \dif t\\
     & \leq C\int_0^t\int_\mathbb{R}\left(\partial_{\xi}^{k+1}Q\right)^2
       \dif\xi \dif t+C\tau^2\int_0^t\|\partial_{\xi}^{k+1}F_1\|_{L^2}^2\dif t\\
      &\leq C\int_0^t\int_\mathbb{R}\left(\partial_{\xi}^{k+1}Q\right)^2
       \dif\xi \dif t+C\delta_R^{\theta},
\end{align*}
Therefore, we conclude that
\[
\begin{aligned}
&N_1^k+N_2^k\leq\epsilon_1\tau\int_\mathbb{R}\left(\partial_{\xi}^kQ(t)\right)\dif \xi
+C(\epsilon_1)\int_\mathbb{R}\left(\partial_{\xi}^{k+1}\Psi(t)\right)\dif \xi
+C\tau^2\int_0^t\int_\mathbb{R}\left(\partial_{\xi}^{k+1}Q\right)^2\dif\xi \dif t\\
&+C\left(\delta_S^2\int_0^t|\dot{X}(t)|^2\dif t+\int_\mathbb{R}\left(\partial_{\xi}^kQ_0\right)\dif \xi
+\int_\mathbb{R}\left(\partial_{\xi}^{k+1}\Psi_0\right)\dif \xi\right)
+C\delta_S\int_0^t\mathcal{G}^S(U)\dif t\\
&+C\delta_R^{\frac{1}{2}}\int_0^t\mathcal{G}^R(U)\dif t
+\frac{\mu}{4C_2}\int_0^t\int_\mathbb{R}\left(\partial_{\xi}^{k+1}\Phi\right)^2\dif\xi \dif t+C\delta_R^{\theta}.
\end{aligned}
\]
Secondly, for $N_3^k$, we have
\[
N_3^k\leq C\delta_S^2\tau^2\int_0^t|\dot{X}|^2\dif t
+C\delta_S\int_0^t\int_\mathbb{R}\left(\partial_{\xi}^{k+1}\Psi\right)^2\dif\xi \dif t.
\]
For $N_4^k$, using Young's inequality, we have
\[
N_4^k\leq \frac{1}{2}\int_0^t\int_{\mathbb{R}}\mu\left(\partial_{\xi}^{k+1}\Psi\right)^2\dif\xi \dif t
   +C\int_0^t\int_\mathbb{R}\left(\partial_{\xi}^{k}Q\right)^2\dif\xi \dif t.
\]
$N_5^k$ can be estimated in the same way as in Lemma  \ref{Le10}. Specially, for $k=0$, we have
\[
F_4^0=-\widetilde{\Pi}(v-\widetilde{v})-\tau\left(\mu\frac{\widetilde{u}^R_{\xi}}{\widetilde{v}^R}\right)_t-
    \sigma\tau\left(\mu\frac{\widetilde{u}^R_{\xi}}{\widetilde{v}^R}\right)_{\xi}
  +(\widetilde{v}^R-v_m)(\widetilde{\Pi}^S)^{-X}
  +\left((\widetilde{v}^S)^{-X}-v_m\right)\left(\mu\frac{\widetilde{u}^R_{\xi}}{\widetilde{v}^R}\right),
\]
so, we have
\[
\begin{aligned}
N_5^0\leq &C\delta_S\int_0^t\int_{\mathbb{R}}\left(\partial_{\xi}\Psi\right)^2\dif\xi \dif t+C\delta_S\int_0^t\mathcal{G}^S(U)\dif t+C\delta_R^{\theta}.
\end{aligned}
\]
For $k=1$, the term $F_3^1$ is the same as in Lemma \ref{Le10}. Thus, using similar estimates as in Lemma \ref{Le10}, we have
\[
N_5^1\leq C\delta\int_0^t\|\partial_{\xi}\Phi\|^2_{H^1}\dif t
+C\delta\int_0^tG(U)\dif t
   +C\delta_S\int_0^t\mathcal{G}^S(U)\dif t+C\delta_R^{\theta}.
\]
Therefore, combining the above estimates, we get the desired results.
\end{proof}
Choosing $\epsilon, \epsilon_1$ small enough and combining the results of Lemma \ref{Le5}, Lemma \ref{Le10}, Lemma \ref{Le11}, Lemma \ref{Le12}, the proof of the Proposition \ref{p1} is finished.

\section{Proof of Theorem \ref{th1.2}}
In this section, we show the Theorem \ref{th1.2} by use of the uniform estimates obtained in Sec.4 and usual compactness arguments. 
Firstly, using the definition of error terms ($\Phi=v-\widetilde v, \Psi=u-\widetilde u, Q=\Pi-\widetilde \Pi$) and a priori estimates (Proposition \ref{p1}), we get
\begin{align*}
\sup_{0\le t<\infty}\|(\Phi^{\tau}, \Psi^{\tau}, \sqrt{\tau}Q^{\tau}) (t,\cdot)\|_{H^2}^2+\int_0^{\infty}\left(\|(\Phi^{\tau}_{\xi}, \Psi^{\tau}_{\xi})\|_{H^1}^2+\|Q^{\tau}\|_{H^2}^2\right)\dif t
\le C_0E(0)+C_0\delta_R^{\theta},
\end{align*}
where $\Phi^{\tau}=v^{\tau}-\widetilde v^{\tau}, \Psi^{\tau}=u^{\tau}-\widetilde u^{\tau}, Q^{\tau}=\Pi^{\tau}-\widetilde \Pi^{\tau}$, $\theta=\min\{\frac{1}{2}, \frac{3}{2}-\frac{1}{q}\}$, $C_0$ is a constant independent of $\tau$ and $\widetilde v^{\tau}=(\widetilde v^S)^{\tau}+\widetilde v^R-v_m, \widetilde u^{\tau}=(\widetilde u^S)^{\tau}+\widetilde u^R-u_m, \widetilde \Pi^{\tau}=(\widetilde \Pi^S)^{\tau}+\mu\frac{(\widetilde u^R)_{\xi}}{\widetilde v^R}$ are the compose waves of system \eqref{eq1}, respectively. Thus, there exist $(\Phi^0, \Psi^0)\in L^{\infty}((0,\infty);H^2)$ and $Q^0\in L^2((0, \infty);H^2)$ such that
\begin{equation}\label{5.1}
\begin{aligned}
(\Phi^{\tau}, \Psi^{\tau})\rightharpoonup(\Phi^0, \Psi^0)\qquad weakly-* \quad in \quad L^{\infty}((0,\infty);H^2),\\
Q^{\tau}\rightharpoonup Q^0 \qquad weakly- \quad in \quad  L^2((0, \infty);H^2).
\end{aligned}
\end{equation}
Secondly, from Lemma \ref{Le2}, we get
\[
\begin{aligned}
&\|(\widetilde v^S)^{\tau}-v_+\|_{L^2(\mathbb{R}^+)}+\|(\widetilde v^S)^{\tau}-v_m\|_{L^2(\mathbb{R}^-)}
+\|(\widetilde v^S)^{\tau}_{\xi}\|_{H^2}\le C,\\
&\|(\widetilde u^S)^{\tau}-u_+\|_{L^2(\mathbb{R}^+)}+\|(\widetilde u^S)^{\tau}-u_m\|_{L^2(\mathbb{R}^-)}
+\|(\widetilde u^S)^{\tau}_{\xi}\|_{H^2}\le C,\\
&\|(\widetilde \Pi^S)^{\tau}\|_{H^2}\le C,
\end{aligned}
\]
where $C$ independent of $\tau$.\\
Then, using compactness theorem, we have
\[
\begin{aligned}
(\widetilde v^S)^{\tau}\rightarrow(\widetilde v^S)^0,\quad(\widetilde u^S)^{\tau}\rightarrow(\widetilde u^S)^0,\qquad strongly\quad in \quad H^2,\\
(\widetilde \Pi^S)^{\tau}\rightharpoonup(\widetilde \Pi^S)^0,\qquad weakly-\quad in \quad H^2.
\end{aligned}
\]
In addition, let $\tau\rightarrow0$ in \eqref{eq25}, we have
\[
(\widetilde \Pi^S)^{\tau}\rightharpoonup\mu\frac{(\widetilde u^S)_{\xi}^0}{(\widetilde v^S)^0}:=(\widetilde \Pi^S)^{0},\qquad weakly-\quad in \quad H^2
\]
and we know that $(\widetilde v^S)^0,(\widetilde u^S)^0$ are the traveling wave solutions of classical Navier-Stokes equations.
Therefore, we have
\begin{equation}\label{5.2}
\begin{aligned}
\widetilde v^{\tau}=(\widetilde v^S)^{\tau}+\widetilde v^R-v_m\rightarrow(\widetilde v^S)^{0}+\widetilde v^R-v_m:=\widetilde v^{0}, \quad strongly\quad in \quad C([0, \infty); H^2),\\
 \widetilde u^{\tau}=(\widetilde u^S)^{\tau}+\widetilde u^R-u_m\rightarrow(\widetilde u^S)^{0}+\widetilde u^R-u_m:=\widetilde u^{0}, \quad strongly\quad in \quad C([0, \infty); H^2),\\
  \widetilde \Pi^{\tau}=(\widetilde \Pi^S)^{\tau}+\mu\frac{(\widetilde u^R)_{\xi}}{\widetilde v^R}\rightharpoonup(\widetilde \Pi^S)^{0}+\mu\frac{(\widetilde u^R)_{\xi}}{\widetilde v^R}:=\widetilde \Pi^{0},\qquad weakly- \quad in \quad C([0, \infty); H^2).
\end{aligned}
\end{equation}
Finally, for any $T>0$, using \eqref{50}, we know that $\Phi_t^{\tau}$ and $\Psi_t^{\tau}$ are bounded in $L^2((0,T);H^1)$. Thus, $(\Phi^0, \Psi^0)$ are relatively compact in $C([0,T];H^1)$. Furthermore, using compactness theorem, for any $\alpha>0$, $(\Phi^{\tau},\Psi^{\tau})$ are relatively compact in $C([0,T];H^{2-\alpha})$. Then, as $\tau\rightarrow0$, we have
\[
(\Phi^{\tau},\Psi^{\tau})\rightarrow(\Phi^0, \Psi^0)\qquad strongly\quad in \quad C([0,T];H^{2-\alpha}).
\]
Therefore, combining \eqref{5.2}, we have
\begin{align}\label{5.3}
(v^{\tau},u^{\tau})\rightarrow(\Phi^0+\widetilde v^{0}, \Psi^0+\widetilde u^{0})
:=(v^0, u^0)
\qquad strongly\quad in \quad C([0,T];H^{2-\alpha}).
\end{align}
On the other hand, noting that $\sqrt{\tau}Q^{\tau}$ is uniform bounded, we get $\tau Q^\tau\rightarrow0$ in $L^{\infty}((0,\infty);H^2)$ as $\tau\rightarrow0$, which yields $\tau Q_t ^{\tau}\rightarrow0$ in $D^{\prime}((0,\infty);H^2)$ as $\tau\rightarrow0$. Using Lemma \ref{Le2} and let $\tau\rightarrow0$ in \eqref{eq1} and \eqref{eq25}, we have
\begin{align}\label{5.4}
\Pi^{\tau}\rightharpoonup\mu\frac{(u^0)_x}{v^0}:=\Pi^0\qquad a.e. \quad (0,\infty)\times\mathbb{R}
\end{align}
and we conclude that $v^0,u^0$ are the solutions of classical Navier-Stokes equations.
Then, combining \eqref{5.1}, \eqref{5.2}, \eqref{5.3} and \eqref{5.4}, we get the desired results.

\end{document}